\numberwithin{equation}{section}
\newenvironment{wquote}{%
  \list{}{%
    \leftmargin0.65cm   %
    \rightmargin\leftmargin
  }
  \item\relax
}
{\endlist}
\newcommand{\bin}{\operatorname{Bin}}
\renewcommand{\restriction}{\mathord{\upharpoonright}}
\renewcommand{\epsilon}{\varepsilon}
 \definecolor{refkey}{gray}{.5}
 \definecolor{labelkey}{gray}{.5}
\definecolor{light}{gray}{.9}
\newtheorem{theorem}{Theorem}[section]
\newtheorem{lemma}[theorem]{Lemma}
\newtheorem{proposition}[theorem]{Proposition}
\newtheorem{corollary}[theorem]{Corollary}
\newtheorem{remark}[theorem]{Remark}
\newtheorem{claim}[theorem]{Claim}
\newtheorem{definition}[theorem]{Definition}
\newtheorem{maintheorem}{Theorem}
\newtheorem*{question*}{Question}
\newtheorem{question}[theorem]{Question}
\newtheorem*{remark*}{Remark}
\newtheorem*{idefinition*}{Definition}
\newcommand{\R}{\mathbb R}
\newcommand{\Z}{\mathbb Z}
\newcommand{\cB}{\ensuremath{\mathcal B}}
\newcommand{\cC}{\ensuremath{\mathcal C}}
\newcommand{\cD}{\ensuremath{\mathcal D}}
\newcommand{\cE}{\ensuremath{\mathcal E}}
\newcommand{\cH}{\ensuremath{\mathcal H}}
\newcommand{\cI}{\ensuremath{\mathcal I}}
\newcommand{\cM}{\ensuremath{\mathcal M}}
\newcommand{\cN}{\ensuremath{\mathcal N}}
\newcommand{\cP}{\ensuremath{\mathcal P}}
\newcommand{\cQ}{\ensuremath{\mathcal Q}}
\newcommand{\cR}{\ensuremath{\mathcal R}}
\newcommand{\cZ}{\ensuremath{\mathcal Z}}
\newcommand{\bbN}{{\ensuremath{\mathbb N}} }
\newcommand{\bbP}{{\ensuremath{\mathbb P}} }
\newcommand{\bbR}{{\ensuremath{\mathbb R}} }
\newcommand{\bbZ}{{\ensuremath{\mathbb Z}} }
\newcommand{\sC}{{\ensuremath{\mathscr C}}}
\newcommand{\ind}{{\bf 1}}
\newcommand{\Dim}{\textsc{d} }
\newcommand{\pif}{\varpi}
\newcommand{\cZf}{\overline{\cZ}}
\newcommand{\mac}{\cM}
\renewcommand{\P}{\mathbb{P}}
\newcommand{\E}{\mathbb{E}}
\begin{document}
\title{Harmonic pinnacles in the Discrete Gaussian model}
\author[E. Lubetzky]{Eyal Lubetzky}
\address{E.\ Lubetzky\hfill\break
Microsoft Research\\ One Microsoft Way\\ Redmond, WA 98052-6399, USA.}
\email{eyal@microsoft.com}

 \author[F. Martinelli]{Fabio Martinelli}
 \address{F. Martinelli\hfill\break Dip.\ Matematica \& Fisica,
   Universit\`a Roma Tre, Largo S.\ Murialdo 1, 00146 Roma, Italy.}
\email{martin@mat.uniroma3.it}

\author[A. Sly]{Allan Sly}
\address{A. Sly\hfill\break
Department of Statistics\\
UC Berkeley\\
Berkeley, CA 94720, USA.}
\email{sly@stat.berkeley.edu}

\begin{abstract}
The 2\Dim Discrete Gaussian model gives each height
function $\eta : \Z^2\to\Z$ a probability proportional to $\exp(-\beta
  \mathcal{H}(\eta))$, where $\beta$ is the inverse-temperature and
  $\mathcal{H}(\eta) = \sum_{x\sim y}(\eta_x-\eta_y)^2$ sums over nearest-neighbor bonds.
We consider the model at large fixed $\beta$, where it is flat unlike its continuous analog (the Gaussian Free Field).

We first establish that the maximum height in an $L\times L$ box with 0 boundary conditions concentrates on
two integers $M,M+1$ with $M\sim \sqrt{(1/2\pi\beta)\log L\log\log L}$.
The key is a large deviation estimate for the height at the origin in $\Z^2$, 
dominated by ``harmonic pinnacles'', integer approximations of a harmonic variational problem.
Second, in this model conditioned on $\eta\geq 0$ (a floor), the average height rises, and in fact the height of almost all sites concentrates on
  levels $H,H+1$ where $H\sim M/\sqrt{2}$.

This in particular pins down the asymptotics, and corrects the order, in results of Bricmont, El-Mellouki and Fr{\"o}hlich (1986), where it was argued that the maximum and the height of the surface above a floor are both of order $\sqrt{\log L}$.

Finally, our methods extend to other classical surface models (e.g., restricted SOS), featuring connections to $p$-harmonic analysis and alternating sign matrices.
\end{abstract}


{
\baselineskip16pt\
\maketitle
}
\vspace{-0.32cm}

\section{Introduction}

The \emph{Discrete Gaussian} (DG) model on $\Lambda\subset \Z^2$ is a distribution over height functions $\eta$ on $\Z^2$ with $\Lambda\ni x \mapsto \eta_x \in \Z$ whereas $\eta_x=0$ for all $x\notin\Lambda$ (zero boundary conditions).
The probability of $\eta$ is penalized exponentially in the squared gradients of $\eta$, namely,
\begin{equation} \label{eq-dg-def}
\pi_\Lambda(\eta) = \frac{1}{\cZ_{\beta,\Lambda}} \exp \Big[ -\beta\, \cH(\eta) \Big]\quad\mbox{ for }\quad \cH(\eta)=\sum_{x\sim y} |\eta_x-\eta_y|^2\,,\end{equation}
where $\beta> 0$ is a parameter (the inverse-temperature), the notation $x\sim y$ denotes nearest-neighbor bonds in the lattice
and $\cZ_{\beta,\Lambda}$ is a normalizer (the partition function).
When it exists, the limit as $L\to\infty$ of $\pi_{\Lambda_L}$ for $\Lambda_L=\{1,\ldots,L\}^2$ will be denoted by $\pi$.

The DG model, dubbed so by Chui and Weeks in 1976 (cf.~\cites{CW76,WG79}), belongs to a family of random surface models introduced as far back as the 1950's to model the shape of crystals and the interfaces in 3-dimensional Ising ferromagnets.
It is the dual of the Villain XY model~\cite{Villain} and is also related by a duality transformation to the Coulomb gas model, hence its vital role in
understanding the \emph{Kosterlitz-Thouless phase-transition} that is anticipated in this family of models (see, e.g.,~\cites{Abraham,Swendsen} and the references therein).

The following basic features of the DG on $\Lambda_L = \{1,\ldots,L\}^2$ (and related models) were rigorously studied in breakthrough papers from the 1980's (\cites{BFL,BMF,BW,FS1,FS2,FS3};  see~\cite{Abraham}).
\begin{question}
  \label{q-height}
  What are the height fluctuations at the origin
  (or some given site), e.g., what is $\E [ \eta_0^2]$ and does it diverge with $L$?
  What is the maximum height $X_L=\max_x \eta_x$?
\end{question}
\begin{question}
  \label{q-floor}
How are these affected by conditioning that $\eta \geq 0$ (a floor constraint\footnote{This appears in
situations where the surface lies above a physical barrier, e.g., modeling the discrete interface between $+$/$-$ in 3-dimensional Ising with boundary conditions $+$ on one face and $-$ elsewhere.})?
\end{question}

Comparing the answers to these questions as the inverse-temperature $\beta$ varies reveals the \emph{roughening transition} that
the DG surface undergoes\footnote{This transition occurs only in dimension $d=2$:
the surface is rough for $d=1$ and rigid for $d\geq3$~\cite{BFL}.}
 at a critical $\beta_{\textsc{r}}$, suggested by numerical experiments to be about $0.665$: The surface transitions from being rigid (localized) at low temperatures (the height at any given site $x$ is bounded in probability)
to rough (delocalized) at high temperatures (that height typically diverges); see~\cites{Abraham,Weeks80}. In the latter regime, the DG model is believed to
be qualitatively similar to its analogue where the height functions are real-valued --- in which case the parameter $\beta$ scales out from~\eqref{eq-dg-def} and the model reduces to the Discrete Gaussian Free Field (DGFF).

Indeed, surface rigidity at large enough $\beta$ is known, as a Peierls argument~(\cites{BW,GMM}) then shows that $\E [\eta_0^2] =O(1)$.
That the surface is rough for small enough $\beta$ was established in the celebrated work of Fr\"ohlich and Spencer~\cites{FS1,FS2}, whence $\E [\eta_0^2] \asymp \log L$ (as is the case for the DGFF). The lower bound on the fluctuations (the main difficulty) was proved via an ingenious analysis of the Coulomb gas model, from which the results for the DG (and related models) followed using the aforementioned duality.

In their beautiful paper~\cite{BMF} from 1986, Bricmont, El-Mellouki and Fr\"ohlich provided a detailed examination of the behavior at low temperatures (the regime we focus on).
They showed that for  large $\beta$, conditioning on $\eta \geq 0$ induces an \emph{entropic repulsion} phenomenon: though in the rigid regime $\beta>\beta_{\textsc{r}}$, the surface rises and the expected average height $\E\big[\frac1{|\Lambda|}\sum_x \eta_x \,\big|\, \eta\geq 0\big]$ diverges as $L\to\infty$. As Abraham wrote in~\cite{Abraham}*{p59},
\begin{wquote}
  ``The origin of this apparently paradoxical
result is that `spikes' grow downwards from the surface; if any spike touches
the surface, such a configuration does not contribute to the entropy. This
drives the surface away `to infinity'.''
\end{wquote}
\vspace{-0.03cm}
 More precisely, it was stated in~\cite{BMF} (Thm.~4.1, Thm.~3.2 and their proofs; cf.~\cite{Abraham}) that
 \begin{equation}
    \label{eq-bef}
     \mbox{$\E\left[\frac{1}{|\Lambda|}\sum_x  \eta_x \;\big|\; \eta\geq 0\right]$}\asymp \sqrt{\beta^{-1}\log L}
    \quad\mbox{ and }\quad
    \E[X_L] \asymp \sqrt{\beta^{-1}\log L}\,,
  \end{equation}
 where $X_L$ is the maximum of the DG surface.
 That is, the average height rises until it become comparable with the maximum of the standard (unconstrained) DG surface.
(Analogous bounds were obtained for the related \emph{Absolute-Value Solid-On-Solid} model, in which $|\eta_x-\eta_y|$ replaces $|\eta_x-\eta_y|^2$ in~\eqref{eq-dg-def}, whence these bounds turn into $\beta^{-1}\log L$.)

To gain some intuition for this result, first consider the maximum: raising a given site to height $h$ via a single spike incurs a cost of $\exp(-c \beta h^2)$
(since its neighbors are typically at height $O(1)$ in the rigid regime), explaining one side of the bound on $\E[X_L]$.
The typical value of the maximum is also an upper bound on the average height when conditioning on $\eta\geq 0$ (at that surface height the floor at 0 is no longer noticeable);
the matching lower bound was quite more involved, using
Pirogov-Sina\"{\i} theory (see~\cite{Sinai}).

It is worthwhile noting that for the DGFF (associated to the high temperature DG), it was shown by Bolthausen, Deuschel and Giacomin~\cite{BDG} that
the maximum concentrates on
$2\sqrt{2/\pi}\log L$,
whereas conditioning on $\eta\geq 0$ raises the height of most sites\footnote{This result of~\cite{BDG} applies to sites at distance at least $\delta L$ from the boundary for some positive $\delta>0$.}
to concentrate on the same $2\sqrt{2/\pi}\log L$
(cf.~\cite{BDZ} for analogous entropic repulsion results for the DGFF in dimensions $d\geq 3$).
That is, the surface rises to the asymptotic level of the unconstrained maximum/minimum (at which point the floor becomes irrelevant).
In view of~\eqref{eq-bef}, it is natural to ask if this is also the case for the low temperature DG.

Specifically, one can ask for asymptotic bounds refining those of~\cite{BMF} (Eq.~\eqref{eq-bef} above), as well as for tight concentration estimates.
Significant progress in this direction was recently obtained~\cites{CLMST1,CLMST2,CRASS} for the related Absolute-Value Solid-On-Solid (SOS) model. There it was shown, amid detailed results on the ensemble of level lines and its scaling limit, that the maximum concentrates on $\tfrac1{2\beta}\log L$ while the typical height above a floor is asymptotically a \emph{half} of that. Supporting many of those arguments was the fact that, in the SOS model, the contribution of the $h$-level lines to the probability of a configuration $\eta$ is only a function of the $(h-1)$-level and $(h+1)$-level lines (enabling an iterative analysis of the surface, one level line at a time). This is unfortunately absent in the DG model due to the quadratic terms $|\eta_x-\eta_y|^2$, calling for additional ideas.

\subsection{Maximum in a box and large deviations in infinite volume}

\begin{figure}
\begin{center}
\vspace{-0.1in}
\includegraphics[width=0.675\textwidth]{dgpinn1}
\hspace{-0.13in}
\raisebox{0\height}{
\includegraphics[width=0.3\textwidth]{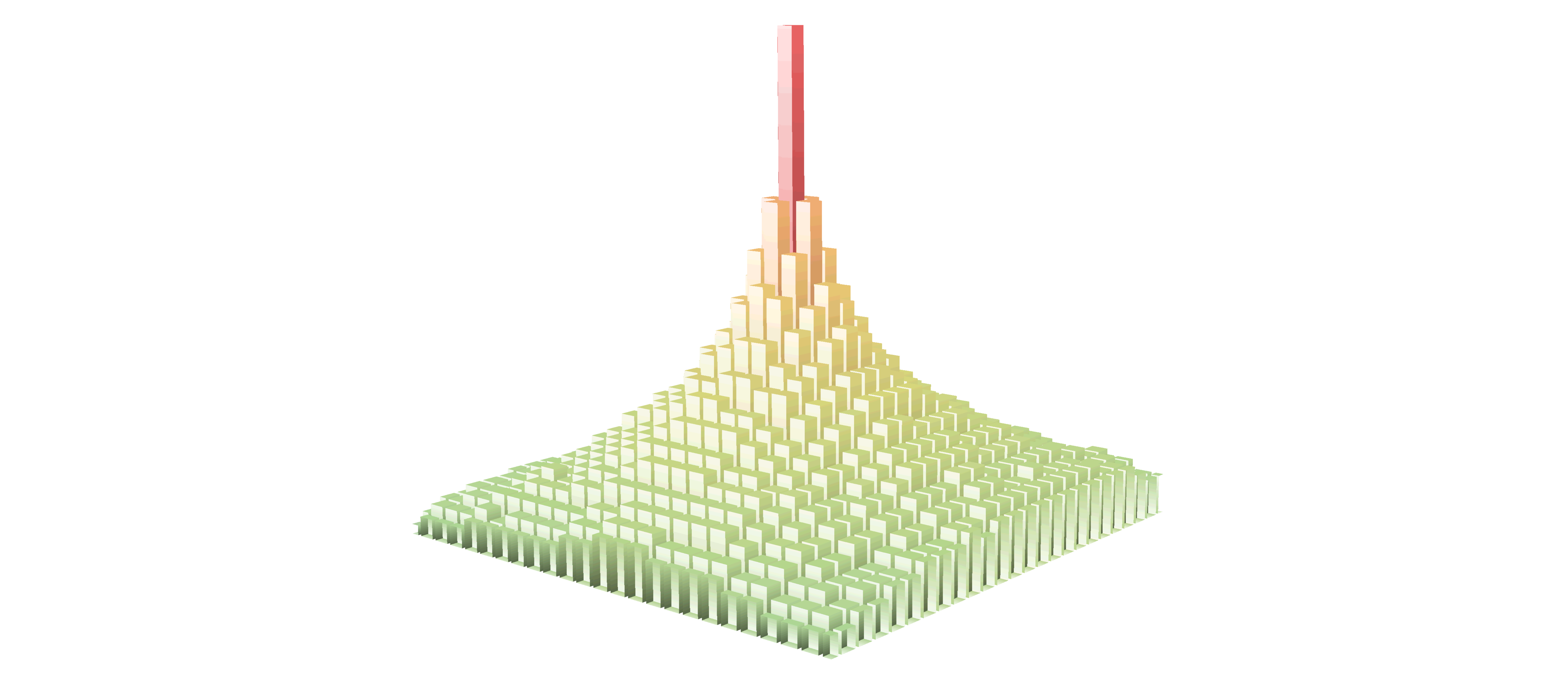}
}
\end{center}
\vspace{-0.1in}
\caption{The low temperature Discrete Gaussian surface conditioned on positive and negative large deviations (magnified on the right).}
\label{fig:dg}
\vspace{-0.05in}
\end{figure}

Our first main result is a 2-point concentration estimate for the maximum of the DG model on a box. (In what follows, we write $f\sim g$ to denote that $\lim_{L\to\infty}f/g = 1$.)
\begin{maintheorem}\label{mainthm:max}
Fix $\beta > 0$ large enough and let $X_L$ be the maximum of the DG model on an $L\times L$ box in $\Z^2$ at inverse-temperature $\beta$. Then there exists some $M=M(L)$ with
 \begin{equation}
   \label{eq:E[XL]}
   M \sim \sqrt{(1/2\pi\beta) \log L \log\log L}
 \end{equation}
such that $X_L \in \{ M,M+1\}$ with probability going to 1 as $L\to\infty$.
\end{maintheorem}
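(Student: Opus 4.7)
The strategy is to deduce two-point concentration of $X_L$ from a sharp large-deviation asymptotic for the single height $\eta_0$, combined with a near-independence argument between distant pinnacles. Concretely, I aim to establish
\[
-\log \pi(\eta_0 \geq h) \;=\; (1+o(1))\,\frac{2\pi \beta\, h^2}{\log h} \qquad (h \to \infty),
\]
from which $M \sim \sqrt{(1/2\pi\beta) \log L \log \log L}$ emerges as the threshold making $L^2\pi(\eta_0 \geq M) = \Theta(1)$. The two-point localization follows because the rate increases by $\asymp 4\pi\beta h/\log h \to \infty$ when $h$ is raised by $1$, so only two consecutive integer heights can fit in the first-moment window.

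\textbf{Lower bound on $\pi(\eta_0 \geq h)$ (harmonic pinnacle).} I would build an explicit integer pinnacle by rounding a continuous harmonic function. Let $u$ be the discrete harmonic function on the ball $B_h = \{x\in\Z^2 : |x|\leq h\}$ with $u(0)=h$ and $u\equiv 0$ on $\partial B_h$, and set $\bar\eta = \lfloor u\rfloor$, extended by $0$ outside $B_h$. The level sets $\{\bar\eta \geq k\}$ are approximately discs of radius $r_k = h^{(h-k)/h}$, and summing perimeters yields
\[
\cH(\bar\eta) \;\sim\; 2\pi \sum_{k=1}^{h} r_k \;\sim\; \frac{2\pi h^2}{\log h},
\]
where the sum is a geometric series with common ratio $h^{1/h} \approx 1 + \log h/h$, of total $\asymp h^2/\log h$. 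A short Peierls argument absorbs the $O(h)$ cost of grafting this pinnacle onto a typical rigid configuration on $\Z^2 \setminus B_h$.

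\textbf{Upper bound on $\pi(\eta_0 \geq h)$ (Dirichlet lower bound on $\cH$).} The main variational input is the discrete Dirichlet principle: writing $\cH_{B_R}$ for the energy restricted to edges inside $B_R$, one has for any radius $R$
\[
\cH(\eta) \;\geq\; \cH_{B_R}(\eta) \;\geq\; \inf\{\cH_{B_R}(v) : v:B_R\to\R,\ v(0)=h,\ v|_{\partial B_R}=\eta|_{\partial B_R}\} \;\geq\; (1-o(1))\,\frac{2\pi h^2}{\log R}
\]
whenever $\eta|_{\partial B_R} = o(h)$. In the rigid regime, this holds at a scale $R$ just outside the pinnacle's support; the integer constraint on $\eta$ forces this scale to be $R \asymp h$ (smaller $R$ requires gradients too steep to interpolate down to $0$, larger $R$ wastes perimeter on spurious level lines), giving the target rate $2\pi h^2/\log h$.

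\textbf{From LDP to the maximum.} The upper tail follows by a union bound:
\[
\P(X_L \geq M+2) \;\leq\; L^2\, \pi_{\Lambda_L}(\eta_0 \geq M+2) \;=\; L^2 \exp\!\left(-(1+o(1))\,\frac{2\pi\beta(M+2)^2}{\log(M+2)}\right) \;\to\; 0.
\]
For the lower tail, apply a second-moment argument to $N = \#\{x\in\Lambda_L : \eta_x \geq M\}$. Since $\{\eta_x \geq M\}$ is essentially determined by $\eta$ in a ball of radius $\asymp M$ around $x$, pairs with $|x-y| \gg M$ contribute as if independent, giving $\mathrm{Var}(N) = O(M^2\,\E[N])$. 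A careful choice of $M$, together with the near-exponential blow-up of $\E[N]$ when $M$ is lowered by one, yields $\P(X_L \geq M) \to 1$ via Chebyshev, closing the two-point window.

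\textbf{Main obstacle.} The hardest step is the matching upper bound on the LDP. The naive SOS-style perimeter bound $\cH(\eta) \geq \sum_k |\partial R_k|$ only gives $\Omega(h^2)$, missing the crucial $1/\log h$ refinement, while the real-valued Dirichlet bound $2\pi h^2/\log R$ is only sharp once the effective outer scale $R \asymp h$ is correctly identified. Ruling out exotic competitors — spikes with huge jumps, highly non-radial or non-monotone level sets, or multi-scale staircases — requires a delicate interpolation between contour/isoperimetric arguments on level lines and discrete harmonic variational estimates, which I expect to be the bulk of the technical work.
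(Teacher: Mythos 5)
Your overall plan — derive the two-point concentration of $X_L$ from the sharp LDP $-\log\pi(\eta_0\geq h)\sim 2\pi\beta\,h^2/\log h$, then use a first-moment bound for the upper tail and a near-independence argument for the lower tail — is precisely the strategy of \S\ref{sec:ldev}, and you correctly state the target rate. However, the lower-bound pinnacle is built on the wrong ball: the optimal support radius is $R\asymp h/\log h$, \emph{not} $R\asymp h$ as you assert both in the lower-bound sketch and in the upper-bound discussion (``the integer constraint forces $R\asymp h$''). The radius is pinned down by requiring the harmonic profile $\phi(x)=h\bigl(1-\log|x|/\log R\bigr)$ to drop to height $\asymp 1$ just inside $\partial B_R$, i.e.\ $h/(R\log R)\asymp 1$; crucially, at this scale the gradient of $\phi$ is $\geq 1$ throughout $B_R$, so the crude per-bond rounding estimate and the partition-function normalizer $\cZ_{B_R}$ each contribute only $O(|B_R|)=O(h^2/\log^2 h)$ to the exponent, negligible against $I_R(h)\sim 2\pi h^2/\log h$ (this is exactly Lemma~\ref{lem:1}). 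Building on $B_h$, those same error terms are $\Theta(h^2)$ and swamp the main term; your ``$O(h)$ grafting cost'' accounts only for the boundary interaction, not for the entropy of freezing the whole interior of $B_h$.

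There is a second, independent flaw in the energy estimate ``$\cH(\bar\eta)\sim 2\pi\sum_k r_k\sim 2\pi h^2/\log h$''. First, the relevant quantity is the lattice edge-boundary of a disc, which is $\sim 8r$, not the Euclidean circumference $2\pi r$. Second, the perimeter-sum identity $\cH(\bar\eta)=\sum_k|\partial\{\bar\eta\geq k\}|$ holds only where each bond crosses at most one level line, i.e.\ where the gradient is $\leq 1$. On $B_h$ this fails inside $B_{h/\log h}$, which is exactly where nearly all the Dirichlet energy $\approx 2\pi h^2/\log h$ lives; meanwhile the outer annulus $h/\log h\lesssim|x|\lesssim h$ has unit-separated level lines of total lattice length $\approx 8h^2/\log h$. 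Your pinnacle therefore has $\cH(\bar\eta)\approx(2\pi+8)\,h^2/\log h$, off by a constant factor from the truth — fatal here, since the exponent $1/(2\pi\beta)$ in $M$ requires the exact constant. The fix is the paper's: take $R=h/\log h$, establish the LDP via the change of variables $\eta=\phi+\sigma$ and the bounds $1\leq\cZ_{B_r}\leq e^{dr^2}$ (Lemma~\ref{lem:1}), Peierls-localize the outermost $1$-contour to a ball of radius $O(h/\log h)$ for the upper bound (Lemma~\ref{lem:Br-bound}), and transfer between $\pi_{B_r}$ and $\pi$ (Corollary~\ref{cor:2}). Given the LDP, your first-moment and second-moment deductions for $X_L$ are sound and close in spirit to the paper's, which opts for FKG plus the decay of correlations of~\cite{BW} in place of a second moment.
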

\noindent(The error probability in the above statement can be taken to be $\exp[-(\log L)^{1/2-o(1)}]$.)
\begin{remark}\label{rem:1-pt-max}
For every $L$ except for a subset of logarithmic density 0 of the integers,
the maximum $X_L$ concentrates on a single integer $M=M(L)$ with high probability.
\end{remark}

Interestingly, upon comparing the estimate~\eqref{eq:E[XL]} with the previous bounds (Eq.~\eqref{eq-bef}) we see that they disagree on the order of the maximum by a factor of $\sqrt{\log\log L}$
 (similarly missing also from the result of~\cite{BMF} on the average height above a floor; see our Theorem~\ref{mainthm:floor-shape}). This is due to the typical type of large deviations (LD) in the surface: instead of forming spikes of height $h$, it is preferable (by a $\log h$ factor) for the DG model to create ``harmonic pinnacles,'' integer approximations of a harmonic variational problem (see Fig.~\ref{fig:dg}),
as seen in the next LD result on $\pi$, the infinite-volume DG measure:
  \begin{equation}
    \label{eq:inf-vol-ldp}
    \pi(\eta_0 \geq h) = \exp\left[-\left(2\pi\beta+o(1)\right)\frac{h^2}{\log h}\right]\,.
  \end{equation}
This estimate (see Theorem~\ref{th:p(h)} in \S\ref{sec:ldev}) will be the main ingredient in proving Theorem~\ref{mainthm:max}.

The integer $M(L)$ such that the maximum $X_L$ belongs to $\{M,M+1\}$ w.h.p.\ (and moreover $X_L=M$ w.h.p.\ for most $L$'s)
is explicitly given as the maximum integer such that $\pi(\eta_0 \geq M) \geq L^{-2}\log^5 L$  (see~\eqref{eq-M-def} in \S\ref{sec:ldev}).
Comparing~\eqref{eq:E[XL]} to~\eqref{eq:inf-vol-ldp} we see that $X_L$ behaves as if the surface consisted of i.i.d.\ variables with law $\pi(\eta_0\in\cdot)$.

For an explanation of how the extra $\log h$ factor arises in Eq.~\eqref{eq:inf-vol-ldp}, see \S\ref{sec:intro-pfs} below.
It is worthwhile noting a separate consequence of this extra factor vs.\ the results in~\cite{BMF}:
\begin{remark}
The convergence of free energy $\psi_\ell = \log \cZ_{\beta,\Lambda}$ on a slab $\Lambda=[-\ell,\ell]\times \Z^2$ to $\psi_\infty$, the free energy of the infinite-volume DG, satisfies
  \[ \exp(-c_1\ell^2/\log\ell) \leq |\psi_\ell-\psi_\infty| \leq \exp(-c_2\ell^2/\log\ell)\]
  for constants $c_1(\beta)>c_2(\beta)>0$
(in contrast with the convergence rate of $\exp(-c \ell^2)$ that was stated in~\cite{BMF}*{Theorem 3.2}; see also~\cite{Abraham}*{p67} for a discussion on that result).
\end{remark}
\subsection{Entropic repulsion in the presence of a floor}
We now address Question~\ref{q-floor} regarding the conditioning on $\eta \geq 0$ (a floor at 0). Here the analysis is considerably delicate, and not only do we show a 2-point concentration for the typical height about $H\sim \sqrt{(4\pi\beta)^{-1}\log L \log\log L}$ (recall that the lower bound of order $\sqrt{\log L}$ due to~\cite{BMF}, which was correct albeit not sharp, relied on the highly nontrivial Pirogov-Sina\"{\i} theory), but furthermore we describe the \emph{shape} of the surface in terms of its level lines.

Deferring formal definitions to \S\ref{sec:floor}, the $h$-level lines are the closed loops that separate $\{x:\eta_x\geq h\}$ and $\{x:\eta_x<h\}$, and a loop is \emph{macroscopic} if it is of length at least $\log^2 L$.
The DG trivially exhibits local fluctuations (e.g., see Eq.~\eqref{eq:inf-vol-ldp}),
which we can filter out in our study of the surface shape by restricting our attention to the macroscopic loops\footnote{one may set the cutoff for macroscopic loops at $C\log L $ for a large $C(\beta)$ without affecting the proofs.}.

Beyond those local fluctuations (occurring at an $\epsilon_\beta$-fraction of the sites for $\epsilon_\beta$ fixed),
we show that the DG surface is typically a \emph{plateau} at an asymptotic height $ (1/\sqrt{2})M$:

\begin{maintheorem}
  \label{mainthm:floor-shape}
Fix $\beta > 0$ sufficiently large, and consider the Discrete Gaussian model on an $L\times L$ box in $\Z^2$ at inverse-temperature $\beta$ with a floor at 0.
Then there exists some $H=H(L)$ with $H \sim \sqrt{(1/4\pi \beta)\log L\log\log L}$ such that w.h.p.\
\begin{equation}\label{eq-height-conc}
 \#\big\{v : \eta_v \in \{H,H+1\}\big\} \geq (1-\epsilon_\beta) L^2\,,
\end{equation}
where $\epsilon_\beta$ can be made arbitrarily small as $\beta$ increases.
Furthermore, w.h.p., 
\begin{compactenum}[(i)]
  \item at each height $1 \leq h \leq H-1$ there is one macroscopic loop with area $(1-o(1))L^2$;
  \item at height $H$ there is one macroscopic loop with area at least $(1-\epsilon_\beta)L^2$;
  \item there is no macroscopic loop at height $H+2$ nor any macroscopic negative loop.
\end{compactenum}
\end{maintheorem}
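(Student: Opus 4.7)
\emph{Proof plan.} The plateau height $H\sim M/\sqrt{2}$ arises from a variational balance between a boundary energy and a bulk entropic gain. With zero boundary conditions and a floor at $0$, lifting the interior to a plateau at height $h$ requires a collar of $h$ nested unit-step level lines hugging $\partial\Lambda_L$, each of length $\approx 4L$ and contributing $\beta$ per edge, for a total boundary cost $\approx 4\beta Lh$ in the exponent (a single jump of size $h$ would be worse by a factor $h$). The compensating gain is that pinnacles of depth $\le h$ below the plateau no longer hit the floor, so by symmetry of $\pi$ and the large-deviation estimate~\eqref{eq:inf-vol-ldp} the expected number of blocked sites is
\[ B(h)\;\approx\;L^2\,\pi(\eta_0\le -h)\;=\;L^2\exp\!\big[-(2\pi\beta+o(1))\,h^2/\log h\big]. \]
Equating the marginal cost $4\beta L$ per extra level with the marginal gain $B(h)-B(h+1)\asymp B(h)$ yields $(2\pi\beta)\,h^2/\log h\sim\log L$, which is exactly the scaling of $H$.

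To convert the heuristic into~\eqref{eq-height-conc}, the plan is a two-sided shift-comparison. For the lower bound, suppose a macroscopic region $R$ of area $\ge\alpha L^2$ has $\eta\le H-1$ throughout. Replacing $\eta$ by $\eta+1$ on $R$ costs only $O(\beta L)$ on $\partial R$ but, by~\eqref{eq:inf-vol-ldp}, liberates $\asymp |R|\exp[-(2\pi\beta)(H-1)^2/\log(H-1)]$ previously blocked pinnacles. By the very choice of $H$ the ratio $\pi(\eta_0\le -(H-1))/\pi(\eta_0\le -H)$ blows up by a factor $\exp[(4\pi\beta)H/\log H]$, superpolynomial in $L$, so such configurations have probability tending to $0$. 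A symmetric shift-down rules out macroscopic plateaus at height $\ge H+2$. Combined with the $\epsilon_\beta$-fraction of locally fluctuating sites inherent in~\eqref{eq:inf-vol-ldp}, this gives the two-level concentration.

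For the structural claims~(i)--(iii) I would combine~\eqref{eq-height-conc} with a low-temperature Peierls bound $\pi(\gamma)\le e^{-\beta|\gamma|}$ for individual level-$h$ loops. By~\eqref{eq-height-conc} the set $\{\eta\ge h\}$ has area $\ge (1-\epsilon_\beta)L^2$ for each $1\le h\le H$, so its outer boundary must contain at least one macroscopic loop, while the total contour budget read off from $\log Z$ prevents more than one such loop per level. For $h\le H-1$ the loop hugs $\partial\Lambda_L$ and encloses $(1-o(1))L^2$; at $h=H$ the area drops to $(1-\epsilon_\beta)L^2$, the loss accounting for plateau-boundary fluctuations. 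Any macroscopic loop at height $H+2$, or any macroscopic negative loop, would produce a macroscopic region of height outside $\{H,H+1\}$, contradicting~\eqref{eq-height-conc}.

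The principal obstacle, absent in the SOS setting of~\cites{CLMST1,CLMST2,CRASS}, is that the quadratic DG Hamiltonian couples all heights simultaneously, so the contribution of a level-$h$ loop cannot be isolated from the finer and coarser levels and the SOS level-by-level iteration is unavailable. My plan to circumvent this is to use the harmonic-pinnacle machinery behind~\eqref{eq:inf-vol-ldp} as a black box: conditional on a candidate collar of macroscopic level lines, the distribution in its interior is close in total variation to a shifted infinite-volume $\pi$ restricted to $\eta\ge -H$, with errors dominated by the tail~\eqref{eq:inf-vol-ldp}. The hardest technical step will be the mesoscopic boundary layer where the outer collar meets the plateau, since there the boundary-energy and bulk-pinnacle accounting interact at the same order and must be matched precisely to identify the integer $H$ (not merely its asymptotic order).
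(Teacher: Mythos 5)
Your heuristic for the scaling of $H$ is exactly right: balancing the $O(\beta L)$ per-level boundary cost against the per-site floor-collision probability $\pi(\eta_0\geq h)$ over area $L^2$ gives $\pi(\eta_0\geq H)\asymp\beta/L$, which matches the paper's definition $H=\max\{h:\pi(\eta_0\geq h)\geq 5\beta/L\}$ in \eqref{eq-H-def}, and hence $H/M\to 1/\sqrt2$ via \eqref{eq:inf-vol-ldp}. But the rigorous step you propose---a one-shot shift $\eta\mapsto\eta+\mathbf 1_R$ on an arbitrary macroscopic region $R\subset\{\eta\le H-1\}$ with claimed boundary cost $O(\beta L)$---fails: nothing constrains $|\partial R|$ to be $O(L)$, and for regions with long or fragmented boundary the boundary cost swamps the entropic gain. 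The paper's Proposition~\ref{prop:h-contour-upper} is the correct version of your shift, but it is necessarily formulated for \emph{contours} $\gamma$, bounding $\pif(\sC_{\gamma,h})\leq\exp[-\beta|\gamma|+\pi(\eta_0\geq h)A(\gamma)+\dots]$, and the comparison between boundary length and enclosed area is made honest by the discrete isoperimetric inequality $A(\gamma)\leq|\gamma|^2/16$ (Lemma~\ref{lem:isop}), not by assuming $|\partial R|\lesssim L$.

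More fundamentally, your plan disavows a level-by-level iteration ("the SOS level-by-level iteration is unavailable"), but the paper's actual proof \emph{is} a level-by-level bootstrap, just not of the SOS level-line-decoupling type. It proceeds by proving $\pif_{\Lambda_L}(\cR_j^c,\cR_{j-1})=O(e^{-\log^2\ell})$ for $j=1,\ldots,H-1$, where $\cR_j$ is the event that a circuit at height $\geq j$ encloses $\Lambda_{L-j\ell}$, with $\ell=\lfloor(4\beta+3)/\pi(\eta_0\geq H-1)\rfloor=L^{1-o(1)}$. Each step tiles the box with overlapping $\ell\times\ell$ squares calibrated so that $4\beta+2\leq\pi(\eta_0\geq h)\ell\leq 4\beta+4$ (Eq.~\eqref{eq-l,h-relation}), invokes Lemma~\ref{lem:encapsulate} to find a circuit $S$ at height $j-1$ hugging each tile, and applies the black-box Proposition~\ref{prop:blackbox} inside $V_S$ after using monotonicity to drop the floor (which also reduces every level $j$ to the same calibrated case $h=H-1$). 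This construction is what yields the nested macroscopic loops of claim~(i), which your direct two-sided shift could not produce even in principle: ruling out macroscopic regions outside $\{H,H+1\}$ does not by itself give one macroscopic loop per level with area $(1-o(1))L^2$. Your "total-variation closeness to a shifted infinite-volume $\pi$ restricted to $\eta\geq -H$" black box is also not how the paper handles the floor: the floor is absorbed into the area term $\pi(\eta_0\geq h)A(\gamma)$ of the contour bound and into the grill estimate (Proposition~\ref{prop:grill}), via FKG and Bonferroni, not via a coupling to a shifted measure.
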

In a sense, this plateau behaves as a raised version of the unconstrained surface, e.g., the probability that $\eta_x \geq H+h$ will be approximately $\pi(\eta_0\geq h)$ and similarly for $\eta_x \leq H-h$ (until capped at the floor).
The integer $H$ is explicitly given by
\begin{equation}
  \label{eq-H-def}
  H = H(L) = \max\left\{ h : \pi(\eta_0 \geq h) \geq 5\beta/L\right\}\,.
\end{equation}

\begin{remark}\label{rem:1-pt-level}
For every $L$ except for a subset of logarithmic density 0 of the integers almost all sites are at level $H$, namely $\#\{v:\eta_v = H\} \geq (1-\epsilon_\beta)L^2$ w.h.p.
Furthermore, for all the non-exceptional values of $L$ we have that the macroscopic loop at height $H$ has area $(1-o(1))L^2$, and there is no macroscopic loop at height $H+1$.
\end{remark}
By combining Theorem~\ref{mainthm:floor-shape} (and the comment following it) with Theorem~\ref{mainthm:max} we get that conditioning on $\eta\geq 0$
tends to increase the maximum $X_L$ by a factor of $1+1/\sqrt{2}+o(1)$.

\begin{maintheorem}
  \label{mainthm:floor-max}
  Fix $\beta > 0$ large enough and let $X_L^* $ be the maximum of the DG on an $L\times L$ box at inverse-temperature $\beta$
  with a floor at 0. There exists $M^*=M^*(L)$ with
 \begin{equation}
   \label{eq:E[XL*]}
   M^* \sim \frac{1+\sqrt{2}}{2\sqrt{\pi\beta}}\sqrt{ \log L \log\log L}
 \end{equation}
 such that $X_L^*\in\{M^*,M^*+1,M^*+2\}$ with probability going to 1 as $L\to\infty$.
\end{maintheorem}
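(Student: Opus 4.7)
The plan is to combine the floor-shape result (Theorem~\ref{mainthm:floor-shape}) with the free-boundary maximum (Theorem~\ref{mainthm:max}), writing $M^*$ as the sum of the plateau level $H$ and the unconstrained maximum $M$ of a DG effectively sitting on top of that plateau. Invoking Theorem~\ref{mainthm:floor-shape}, one conditions on the plateau structure: w.h.p.\ there is a nested family of macroscopic $h$-level loops for each $1\le h\le H-1$ enclosing $(1-o(1))L^2$ sites, a macroscopic $H$-level loop $\gamma_H$ of area at least $(1-\epsilon_\beta)L^2$, no macroscopic $(H+2)$-level loop, and no macroscopic negative loop. Inside $\gamma_H$ one has $\eta\ge H$ pointwise, so the floor $\eta\ge 0$ is inactive there, and the task becomes to show $X_L^*\in\{H+M-O(1),\ldots,H+M+O(1)\}$ w.h.p.

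For the upper bound $X_L^*\le H+M+2$, take any $h\ge H+M+3$ and apply a union bound over $x\in\Lambda$. Conditioning on the external configuration (including $\gamma_H$) and restricting to a site $x$ inside $\gamma_H$, the conditional law of $\eta$ on $\mathrm{int}(\gamma_H)$ is a DG with boundary values essentially $H$ on $\gamma_H$ and no active floor; shifting by $-H$ produces an ordinary DG on a subregion of $\Lambda$ with $\approx 0$ boundary. Monotonicity (in boundary and volume) yields the comparison $\P(\eta_x\ge h \mid \gamma_H) \le \pi(\eta_0 \ge h-H-O(1))$, and for $h-H\ge M+3$ the right-hand side is $o(L^{-2})$ by the definition of $M$ in Theorem~\ref{mainthm:max} together with~\eqref{eq:inf-vol-ldp}. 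Sites outside $\gamma_H$ are handled similarly, using that no macroscopic $(H+2)$-level loop exists.

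For the lower bound $X_L^*\ge H+M$, conditional on the plateau the marginal of $(\eta-H)$ on a square sub-box $B\subset\mathrm{int}(\gamma_H)$ of side $L'=(1-\sqrt{\epsilon_\beta})L$ (extractable by a simple geometric argument from a region of area $(1-\epsilon_\beta)L^2$) stochastically dominates a DG on $B$ with zero boundary conditions, since $\partial B\subset\mathrm{int}(\gamma_H)$ satisfies $\eta\ge H$. Since $L'$ is a constant fraction of $L$, the $M$-value for $L'$ differs from $M(L)$ by $O(1)$, so the lower-bound half of Theorem~\ref{mainthm:max} applied to this sub-box produces a point with $\eta\ge H+M-O(1)$. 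From~\eqref{eq:E[XL]} and the formula for $H$ one computes
\begin{equation*}
H+M \;\sim\; \bigl(1/\sqrt{2} + 1\bigr)\sqrt{(2\pi\beta)^{-1}\log L\log\log L} \;=\; \tfrac{1+\sqrt{2}}{2\sqrt{\pi\beta}}\sqrt{\log L\log\log L}\,,
\end{equation*}
matching $M^*$ in~\eqref{eq:E[XL*]}; the three-point (rather than two-point) concentration reflects both the 2-point concentration of the inner maximum and the $\{H,H+1\}$ ambiguity of the plateau level.

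The main obstacle is the upper bound: because the floor conditioning makes $\eta$ stochastically larger by FKG, the infinite-volume tail~\eqref{eq:inf-vol-ldp} cannot be applied to the floor measure directly. The plateau must first be peeled off—deactivating the floor—so that a comparison with $\pi$ through monotonicity in the boundary becomes available. Making this reduction quantitative, including the control of atypical heights on or near $\gamma_H$ and of boundary-layer fluctuations, is the central technical task.
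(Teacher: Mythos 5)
The overall strategy — write $M^*$ as $H+M$, use Theorem~\ref{mainthm:floor-shape} for the plateau and Theorem~\ref{mainthm:max} for the residual maximum sitting on top of it — is the right one, and is indeed what the paper has in mind (the paper leaves Theorem~\ref{mainthm:floor-max} essentially implicit, pointing to the remark that the plateau behaves as a raised version of the unconstrained surface). The arithmetic $H+M\sim(1+1/\sqrt2)M$ and the heuristic for the three-point window are also correct. However, both halves of your argument have a gap at precisely the place where the floor has to be peeled off, and in each case the fix is different from what you wrote.

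For the upper bound, conditioning on $\gamma_H$ does \emph{not} deactivate the floor inside. The conditional law on $V_{\gamma_H}$ is $\pif^{\geq H}_{V_{\gamma_H}}$, which by FKG stochastically dominates the unconditioned $\pi^H_{V_{\gamma_H}}$; the inflation factor is $1/\pi^H_{V_{\gamma_H}}(\eta\geq 0)$, and since $|V_{\gamma_H}|=\Theta(L^2)$ while $\pi(\eta_0\geq H+1)\asymp 1/L$, that factor is $e^{\Theta(L)}$ — far too large to be absorbed into an $o(L^{-2})$ bound. The comparison $\P(\eta_x\geq h\mid\gamma_H)\leq\pi(\eta_0\geq h-H-O(1))$ therefore does not follow from monotonicity as you state. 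The fact that rescues the argument is precisely the \emph{absence of macroscopic $(H+2)$-contours} from Theorem~\ref{mainthm:floor-shape}(iii), which you invoke only as an afterthought for sites outside $\gamma_H$ but which is the crucial tool everywhere: on this event, any site with $\eta_x\geq H+M+3$ is enclosed by an $(H+2)$-contour $\gamma$ with $|\gamma|\leq\log^2 L$, so $|V_\gamma|=O(\log^4 L)$; then $\pi^{H+1}_{V_\gamma}(\eta\geq 0)\geq 1-|V_\gamma|\cdot\pi(\eta_0>H+1)=1-o(1)$ (since $\pi(\eta_0>H+1)=o(1/L)$), i.e.\ the floor inflation inside this small region is $1+o(1)$, and only then does dropping the floor, shifting by $H+1$, and comparing with $\pi$ via Lemma~\ref{lem:Br-bound} and Corollary~\ref{cor:2} become legitimate.

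For the lower bound, the justification you give for stochastic domination is incorrect: writing ``$\partial B\subset\mathrm{int}(\gamma_H)$ satisfies $\eta\geq H$'' conflates the contour $\gamma_H$ with its interior — Theorem~\ref{mainthm:floor-shape} guarantees $\eta\geq H$ only on $\partial^-_{\gamma_H}$, not throughout $V_{\gamma_H}$. The correct statement is weaker but sufficient: conditional on $\gamma_H$ and the external configuration, the law inside $V_{\gamma_H}$ is $\pif^{\geq H}_{V_{\gamma_H}}$, which by FKG dominates $\pi^H_{V_{\gamma_H}}$ (drop the floor, lower the boundary to exactly $H$), and then one runs the lower-bound half of the proof of Theorem~\ref{mainthm:max} for $\pi^0_{V_{\gamma_H}}$ on a grid of sites at pairwise distance $\log^2 L$ inside $V_{\gamma_H}$; since $V_{\gamma_H}$ has area $(1-\epsilon_\beta)L^2$, that grid has $\sim L^2/\log^4 L$ sites and the argument goes through unchanged. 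In short: your decomposition is the right one and the final asymptotics are correct, but the floor has to be neutralized by localizing to a \emph{microscopic} $(H+2)$-contour in the upper bound and by an FKG comparison (not an erroneous pointwise claim about $V_{\gamma_H}$) in the lower bound.
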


\begin{figure}
\vspace{-0.3cm}
\begin{center}
\includegraphics[width=0.35\textwidth]{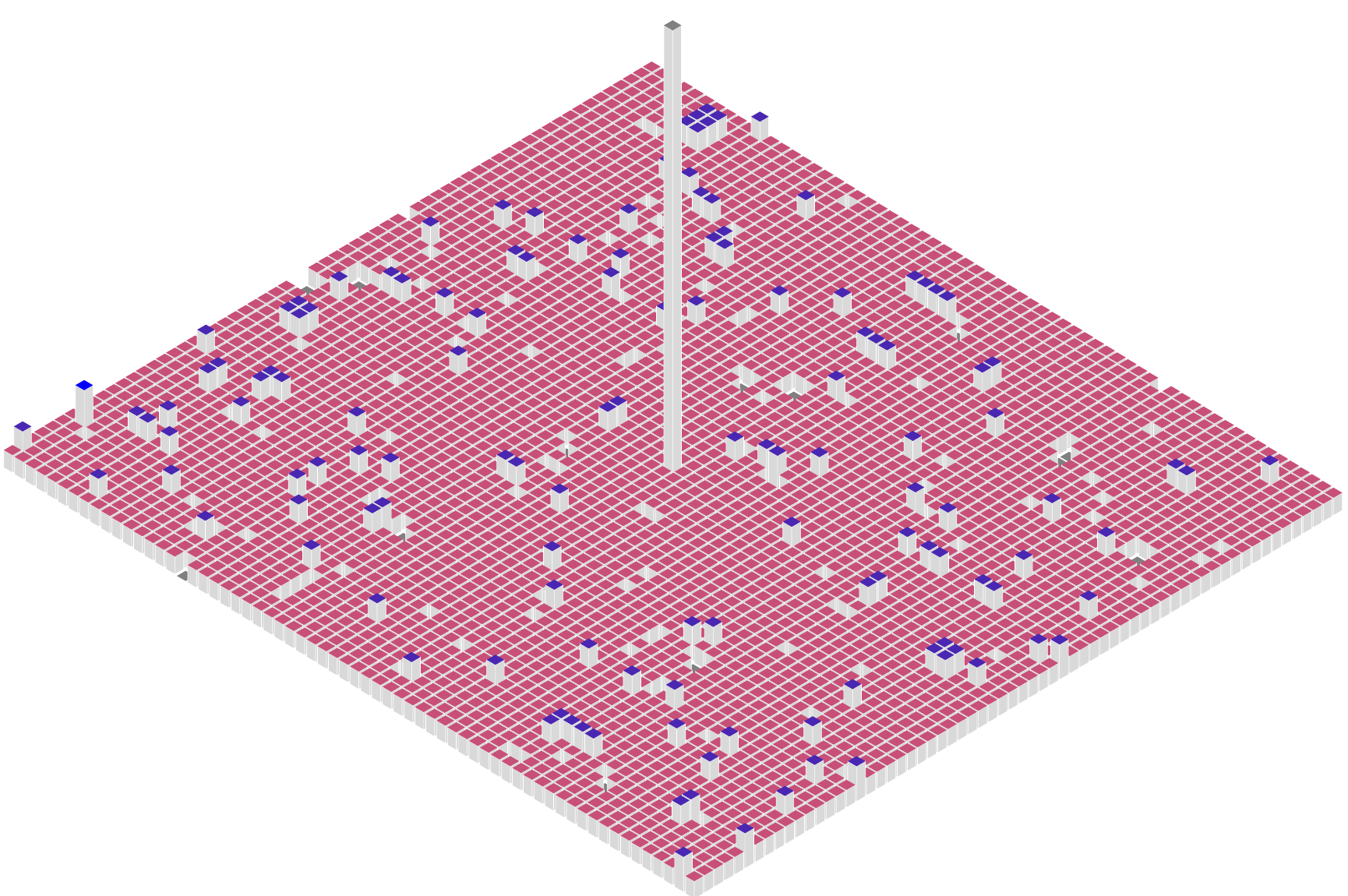}
\hspace{-0.6in}
\raisebox{0.2\height}{
\includegraphics[width=0.35\textwidth]{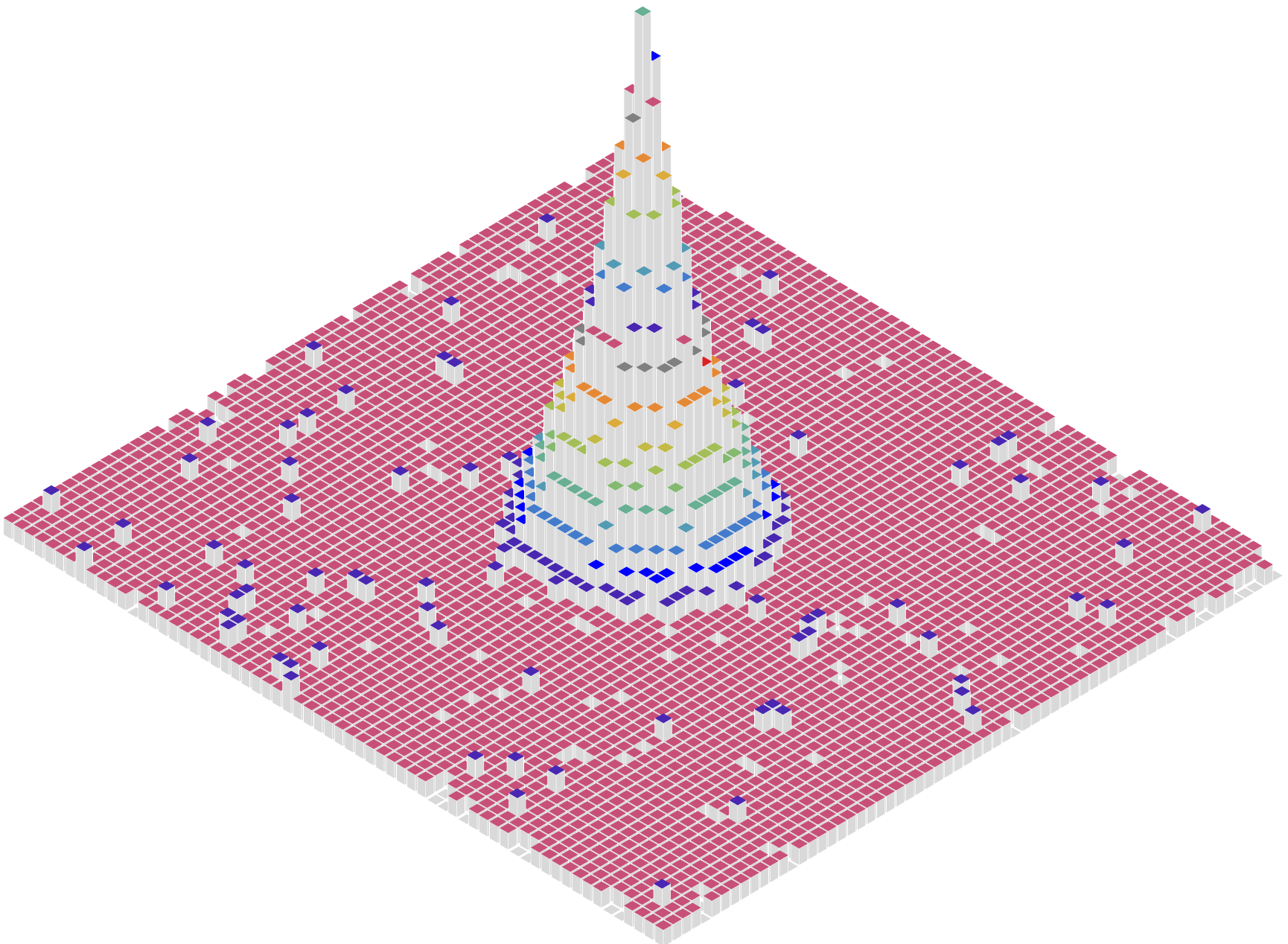}
}
\hspace{-0.45in}
\includegraphics[width=0.35\textwidth]{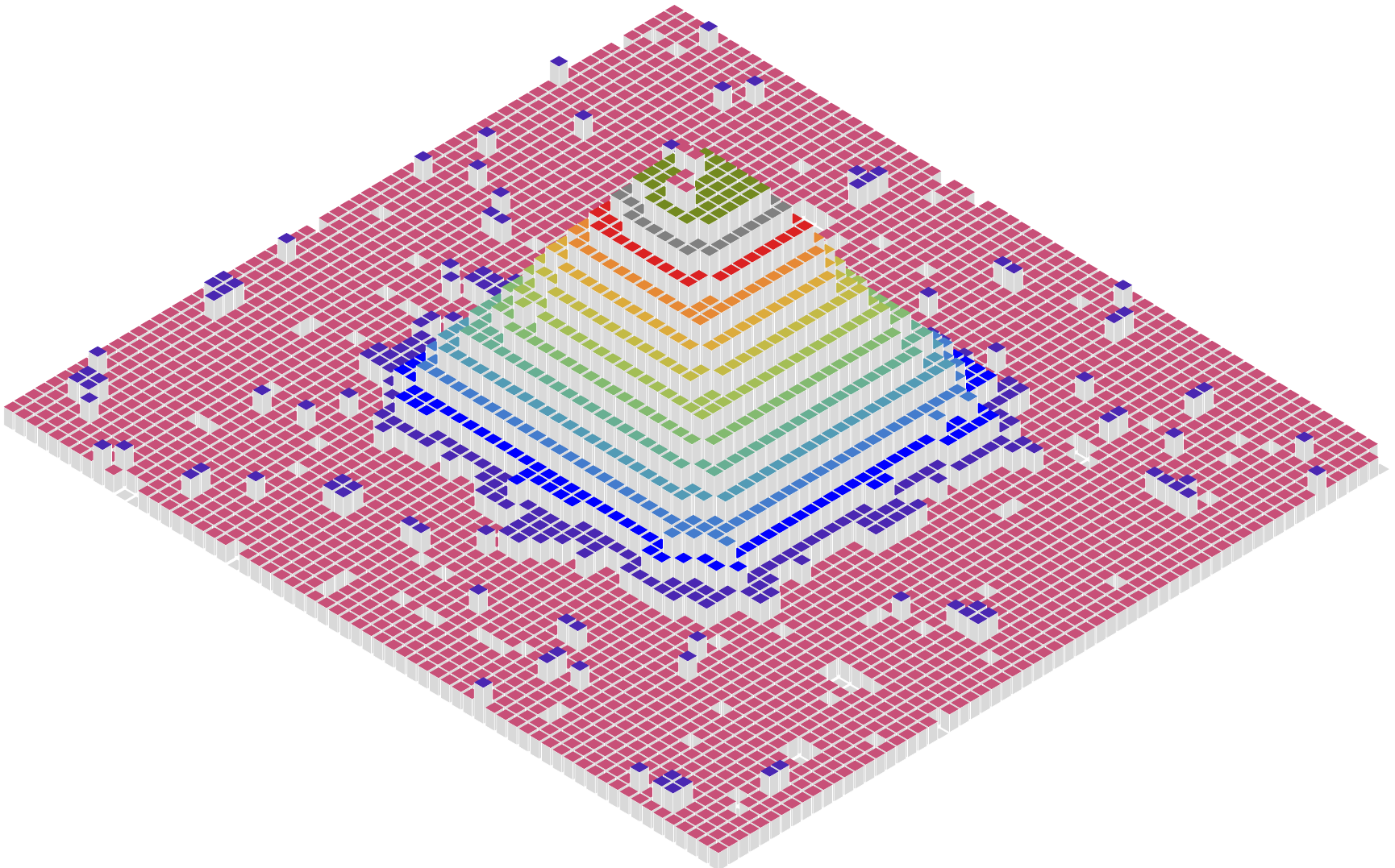}
\end{center}
\vspace{-0.15in}
\caption{Large deviations of the height at the origin: from left to right, SOS (spike), DG (harmonic pinnacle) and RSOS (pyramid).}
\label{fig:ld}
\vspace{-0.2cm}
\end{figure}

\subsection{Generalizations to random surfaces with $p$-Hamiltonians}
 Our arguments extend to the family of random surface models in which
 the Hamiltonian $\cH(\eta)$ in~\eqref{eq-dg-def} is replaced by $\sum_{x\sim y} |\eta_x-\eta_y|^p$ for any $p\in(1,\infty]$.
 (The case $p=\infty$, i.e., $|\eta_x-\eta_y|\in\{0,\pm1\}$ for all $x\sim y$, is the \emph{restricted SOS} (RSOS) model.)
 The next table summarizes our analogous results for general $p$ (see Fig.~\ref{fig:ld} for the LD comparison of $p=1,2,\infty$).
{
\tabcolsep=0.10cm
\begin{table}[h]
\hspace{-0.15cm}
\begin{tabular}{ccccccc}
\toprule%
Model & Large deviation  & \multicolumn{2}{c}{\qquad \quad Maximum} &  \multicolumn{2}{c}{\; ~~ Height above floor} & Ref. \\
   & {\footnotesize $-\log\pi(\eta_0 \geq h)$} &  {\footnotesize center ($M$)} &{\footnotesize window} &  {\footnotesize center ($H$)}&{\footnotesize window} &     \\
\midrule
\noalign{\medskip}
\begin{tabular}
  {c} $p=1$ \\ {\small (SOS)}
\end{tabular}
& $4\beta h + \epsilon_\beta$
& $ \frac1{2\beta}\log L $
& $ O(1)$
& $ \lceil\frac1{4\beta}\log L\rceil $
& $\pm 1$
& \cites{CLMST1,CLMST2}\\
\noalign{\medskip}
\midrule[0.25pt]
\noalign{\medskip}
$1<p<2$
& $ \left(c_p \beta +o(1)\right) h^p $
& $ \big(\frac{2+o(1)}{c_p\beta}\log L\big)^{\frac1p} $
& $ \pm1 $
& $  \big(\frac{1+o(1)}2\big)^{\frac1p} M $
& $\pm 1$
& \S\ref{sec:1<p<2}\\
\noalign{\medskip}
\midrule[0.25pt]
\noalign{\medskip}
\begin{tabular}
  {c} $p=2$ \\ {\small (DG)}
\end{tabular} & $\big(2\pi\beta+o(1)\big)\frac{h^2}{\log h}$
& {\small $ \sqrt{\frac{1+o(1)}{2\pi\beta}\log L \log\log L} $}
& $ \pm 1$
& $ \frac{1+o(1)}{\sqrt2}M$
& $\pm 1$
& \S\ref{sec:ldev}--\ref{sec:floor}\\
\noalign{\medskip}
\midrule[0.25pt]
\noalign{\medskip}
$2<p<\infty$
& $\asymp \beta h^2 $
& $ \asymp\sqrt{\frac{1}{\beta}\log L} $
& $\pm 1$
& $ \frac{1+o(1)}{\sqrt{2}} M $
& $\pm 1$
& \S\ref{sec:2<p<inf}\\
\noalign{\medskip}
\midrule[0.25pt]
\noalign{\medskip}
\begin{tabular}
  {c} $p=\infty$ \\ {\small (RSOS)}
\end{tabular}
& 
$\left(4\beta + 2\log\frac{27}{16}+\epsilon_\beta\right) h^2$
& $(1\pm\epsilon_\beta)  \sqrt{\frac2{c_\infty}\log L}$
& $\pm 1$
& $ \frac{1+o(1)}{\sqrt2}M$
& $\pm 1$
& \S\ref{sec:RSOS}\\
\noalign{\medskip}
\midrule[0.25pt]
\bottomrule
\end{tabular}
\end{table}
}

As the above table shows, while the values of $M$ and $H$ --- the centers of the maximum and the height of the plateau conditioned on $\eta\geq 0$, respectively --- vary with $p$, the qualitative behavior of a 2-point concentration for the two corresponding variables, as well as having the ratio $H/M$ converge to some fixed $c_p\in(0,1)$ as $L\to\infty$, is universal.

Thanks to the generality of the framework for proving Theorem~\ref{mainthm:floor-shape}, all that is needed to obtain analogous results for any $p>1$ is to estimate the large deviation problem at the origin under the infinite-volume measure $\pi$ (analogous to~\eqref{eq:inf-vol-ldp}), as well as the 2-point large deviation problem (e.g., estimate  $\pi(\eta_z=h\mid\eta_0=h)$ for $z$ near the origin).
These, in turn, reduce to variational problems with connections to $p$-harmonic analysis (for $1<p<2$) and Alternating Sign Matrices (ASMs) (for $p=\infty$, see Fig.~\ref{fig:rsos1}).

\begin{figure}
\begin{center}
\raisebox{0.22\height}{\includegraphics[width=0.25\textwidth]{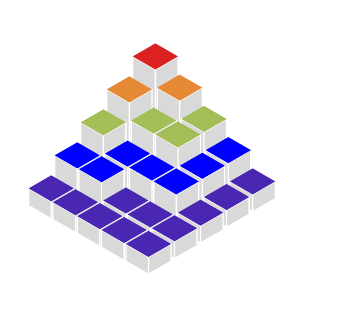}}
\hspace{-0.3in}
\includegraphics[width=0.4\textwidth]{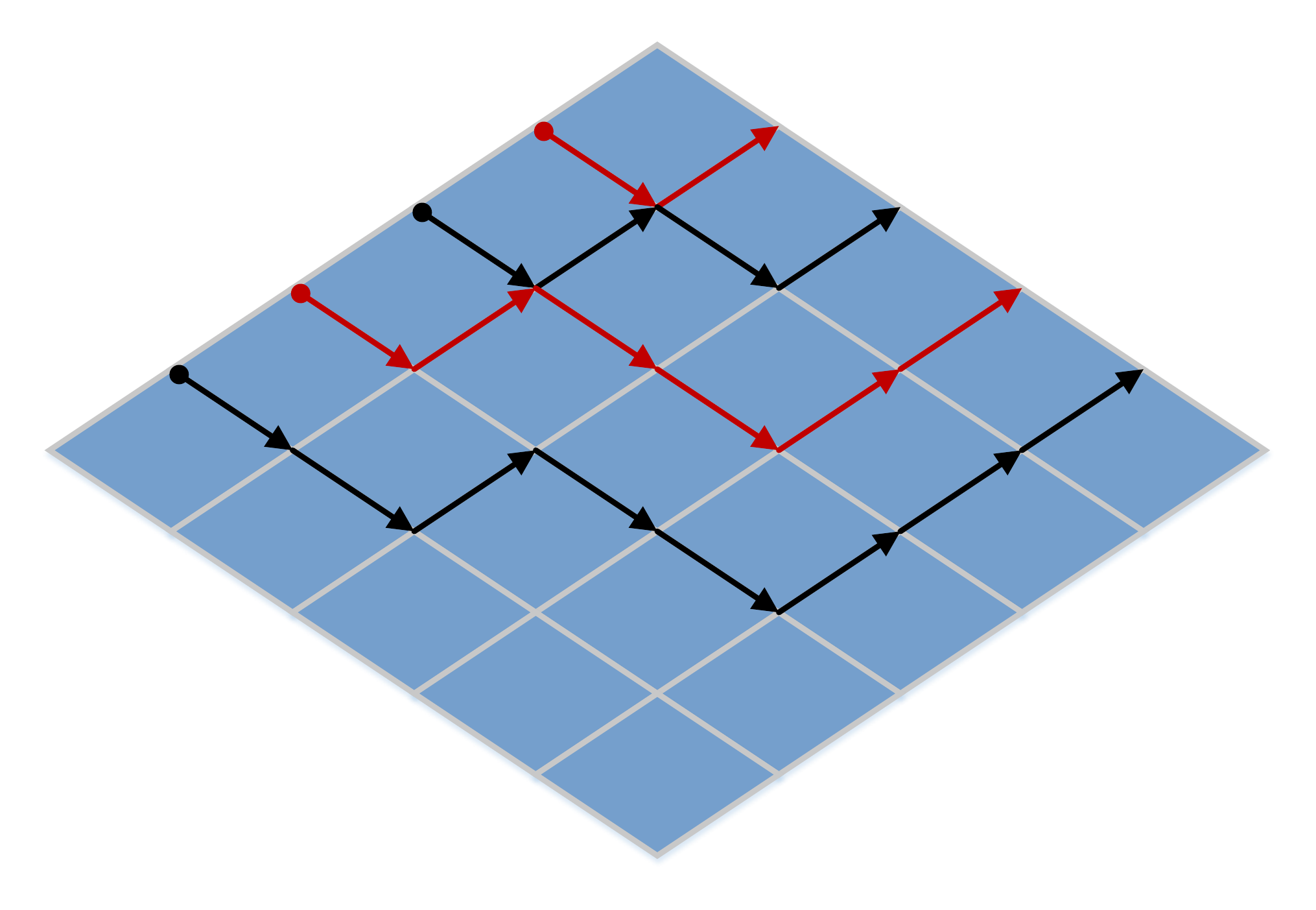}
\hspace{-0.1in}
\raisebox{1.8\height}{\color{labelkey}
$\begin{pmatrix}
  0 & 1 &0 &0 \\
  0 & 0 & 1 &0 \\
   1 & -1 &0 &1 \\
    0 & 1 & 0 &0
\end{pmatrix}$
}
\end{center}
\vspace{-0.1in}
\caption{Correspondence between the RSOS optimal-energy surfaces, edge-disjoint walks, and (via the six-vertex model) ASMs.}
\label{fig:rsos1}
\end{figure}

\subsection{Ideas from the proofs for the DG}\label{sec:intro-pfs}
 The following heuristics demonstrates the extra $\log h$ factor in the LD result on $\pi$.
  Suppose first that the height functions were real-valued
on the region $B_r$ --- the discrete ball of radius $r$ in $\Z^2$ centered at the origin --- for some large integer $r$.
Denoting these by $\varphi: B_r\mapsto
\bbR$, the LD problem is to find
\begin{equation}
\label{eq:variation}
I_r(h):=\inf \{\cD_{B_r}(\varphi):\ \varphi\restriction_{B_r^c}=0,\ \varphi_0=h\}\quad\mbox{ where }\quad\cD_{B_r}(\varphi)=\sum_{x\sim y }(\varphi_x-\varphi_y)^2\,;
\end{equation}
its minimizer $\phi$ is well-known to be the solution of the Dirichlet problem on $B_r\setminus\{0\}$,
\begin{align*}
(\Delta\phi)\restriction_{B_r\setminus \{0\}}=0\,, \quad \phi\restriction_{\partial B_r}=0\,,
\quad\phi_0 =h\,,
  \end{align*}
in which $\Delta$ denotes the discrete Laplacian $\Delta \phi_x= \frac 14
\sum_{y\sim x} (\phi_y -\phi_x)$.
Therefore, $\phi$ has the explicit representation $
\phi_x= h\bbP_x(\tau_0<\tau_{\partial B_r})$,
where $\tau_0$ and $\tau_{\partial B_r}$ are the hitting times of the
origin and of $\partial B_r$, respectively, for the simple random walk started at $x$. In particular, by well-known estimates on the Green's function (see~\cite{Lawler}*{Prop.~1.6.7}),
\[
\phi_x = \left(1- \frac{\log |x|+O(1)}{\log
    r}\right)h\quad\mbox{ for all $x$ with $1<|x|<r$}\,.
\]

Now let us return to the setting of integer values $\eta:B_r\to\Z$, and for the moment suppose that the real-valued solution $\phi$ can be \emph{rounded} without any loss in the cost function. Still, $\phi$ sends (smaller and smaller) mass to $\infty$, while the integer-valued solution analogous to~\eqref{eq:variation} must be truncated to 0 once it drops below $1$.
Taking $|x|=r-1$ (near $\partial B_r$) and solving $\phi_x \asymp 1$ using the last display gives $r \sim h/\log h$.

Two observations at this point complete the heuristical explanation of~\eqref{eq:inf-vol-ldp}:
\begin{compactenum}[(i)]
  \item the real-valued solution for $r\asymp \frac{h}{\log h}$ is $I_r(h)\sim 2 \pi\beta \frac{h^2}{\log h}$ (our final LD estimate);
\item the volume of $B_r$ is $O(h^2/\log^2 h)$, and so the rounding cost (even when charging $2\beta$ per bond in $B_r$) is negligible in comparison  with the main term $I_r(h)$.
\end{compactenum}
The essence of proving Theorem~\ref{mainthm:max} is to rigorously establish that the solution to the integer-valued variational problem is indeed of this form, e.g., that is supported on a ball of radius $O(h/\log h)$, etc.
To that end, we write this solution as $\phi+\sigma$ and bound the effect of the residue $\sigma$
using the harmonic properties of the real-valued solution $\phi$.

\smallskip

One of the main keys for proving Theorem~\ref{mainthm:floor-shape} is a building block (Proposition~\ref{prop:blackbox}) that allows us to say that, if $h$ and $\ell$ are two integers \emph{satisfying a specific condition in terms of the LD rate function for the DG}, then a square of side-length $\ell$ with boundary conditions $h-1$ will contain, with very high probability, an $h$-level line loop filling almost its entire area.
Namely, the condition that $h,\ell$ must satisfy is that
\[ 4\beta+2  \leq  \pi\left(\eta_0\geq h\right) \ell \leq  4\beta+4 \,,\]
where this relation embodies the entropic repulsion tradeoff between increasing the height (the large deviation term) and increasing the area (the side-length, governing the area via an isoperimetric inequality, whence the factor 4 that appears here).

Our strategy is then to iteratively ``grow the surface'', assuming inductively that the $(h-1)$-level line fills almost the entire square and establishing the next level for each $h=1,\ldots,H$.
In order to raise the surface height from $h-1$ to $h$, we consider a small enough $\ell\times\ell$ tile for which the above condition would hold, and apply the above result to overlapping tilings of the $L \times L$ square $\Lambda_L$ using such tiles; these lead to a single loop that fills all but a margin of at most $\ell$ from the boundary of $\Lambda_L$.

That the loops at levels $1,\ldots,H-1$ have area $(1-o(1))L^2$ is explained by the fact that the prescribed $\ell\times \ell$ tile used to establish levels $h=1,\ldots,H-1$ satisfy $\ell=o(L)$, and so it asymptotically fills $\Lambda_L$. At the final level $H$ this may no longer be the case, and indeed there should be values of $L$ where the $H$-level line will indeed erode linearly away from the corners, forming a Wulff shape as in the case of the SOS model~\cite{CLMST2}.

\subsection{Open problems}
The universality of the family of random surface models for $p\in[1,\infty]$, as discussed above, suggests that the DG should possess many of the features of the SOS surface. Following the recent understanding in~\cite{CLMST2}, it is plausible that, for the values of $L$ where the $H$-level line asymptotically fills the square, it would
feature $L^{1/3+o(1)}$ fluctuations from the boundary of the box; for the exceptional values of $L$, the scaling-limit of the $H$-level line should be the result of a tiling of a properly rescaled Wulff-shape, whence it would overlap with the boundary near the center-sides while featuring rounded corners; one would expect $L^{1/3+o(1)}$ fluctuations of the $H$-level lines along the straight parts of this limit, and $L^{1/2+o(1)}$
fluctuations along the corners.

\subsection{Organization}
In~\S\ref{sec:ldev} we study the maximum of the DG on a box through the related LD question in infinite-volume, proving Theorem~\ref{mainthm:max}. The shape of the DG above a floor, as well as the entropic repulsion effect on the maximum, is analyzed in~\S\ref{sec:floor}, where we prove Theorems~\ref{mainthm:floor-shape}--\ref{mainthm:floor-max}. Finally, the extensions of these results to the family of random surface models where the Hamiltonian features $p$-powers of the gradients appear in~\S\ref{sec:other-p}.

\section{Large deviations and Proof of Theorem~\ref{mainthm:max}}\label{sec:ldev}
Our main result in this section is the following LD estimate. Throughout this section, we let $\partial B_r$ denote the external boundary of $B_r$ (i.e., $x\notin B_r$ with $x\sim y$ for some $y\in B_r$).
\begin{theorem}
\label{th:p(h)}
Fix $\beta$ large enough and let $\Phi(h):=I_{h/\log h}(h)$ with $I_r(h)$ as in~\eqref{eq:variation}. There exist constants $c_0,c_1,c_2$ such
that the following hold for any $h\in \bbN$ and $z\in \Z^2$:
\begin{align}
e^{-c_0 \beta h/\log h}\le \frac{\pi(\eta_0=h)}{\pi(\eta_0=h-1)}\le
e^{-c_1\beta h/\log h}\,,\label{eq-ratio-p(h)/p(h-1)}\\
\pi(\eta_0=h)= \exp\left[-\beta \Phi(h) + O\left(h^2/\log^2 h\right)\right]\,,
\label{eq-p(h)}\\
\pi(\eta_z=h\mid \eta_0=h)\le e^{-c_2 h^2/(\log
  h)^2 }\,.
  \label{eq-p(h,h)}
\end{align}
\end{theorem}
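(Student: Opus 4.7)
My plan is to reduce all three estimates to the continuous variational problem~\eqref{eq:variation} via a decomposition $\eta = \phi + \sigma$, where $\phi$ is a real-valued harmonic pinnacle of optimal radius $r_h := h/\log h$ carrying essentially all of the Dirichlet cost, and $\sigma$ is a small integer residue controlled by low-temperature rigidity. The minimizer of~\eqref{eq:variation} is $\phi^{(r)}_x = h\,\P_x(\tau_0 < \tau_{\partial B_r})$ with Dirichlet energy $h^2/G_r(0)$, where $G_r$ is the killed Green's function of SRW; using $G_r(0) = \tfrac1{2\pi}\log r + O(1)$ (\cite{Lawler}*{Prop.~1.6.7}) one gets $I_r(h) = \tfrac{2\pi h^2}{\log r}(1+O(1/\log r))$, whence $\Phi(h) \sim 2\pi h^2/\log h$ and $\Phi(h)-\Phi(h-1) = \Theta(h/\log h)$. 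The choice $r = r_h$ is forced because $\phi^{(r_h)} \asymp 1$ near $\partial B_{r_h}$: any larger $r$ would let $\phi^{(r)}$ drop below $1$, a regime the integer-valued minimizer cannot follow.

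\textbf{Lower bound on $\pi(\eta_0=h)$.} I would take as trial configuration the pointwise rounding $\bar\phi$ of $\phi^{(r_h)}$, extended by $0$ outside $B_{r_h}$. Rounding perturbs each of the $O(r_h^2) = O(h^2/\log^2 h)$ bonds in $B_{r_h}$ by $O(1)$ in squared gradient, so $\cH(\bar\phi) \le \Phi(h) + O(h^2/\log^2 h)$. A standard low-temperature cluster expansion gluing $\bar\phi$ on $B_{r_h}$ with typical rigid DG fluctuations outside then delivers $\pi(\eta_0=h) \ge \exp\bigl[-\beta\Phi(h) - O(\beta h^2/\log^2 h)\bigr]$.

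\textbf{Upper bound on $\pi(\eta_0=h)$ (main difficulty).} For the reverse inequality, given $\eta$ with $\eta_0=h$ let $R_\eta$ be the smallest $R$ with $\eta|_{\partial B_R}\equiv 0$, and on $\overline{B_{R_\eta}} := B_{R_\eta}\cup\partial B_{R_\eta}$ decompose $\eta = \phi^{(R_\eta)} + \sigma$ so that $\sigma_0 = 0$ and $\sigma|_{\partial B_{R_\eta}} = 0$. Summation by parts combined with the harmonicity of $\phi^{(R_\eta)}$ off the origin makes the cross term vanish identically, yielding $\cH_{\overline{B_{R_\eta}}}(\eta) = \cH(\phi^{(R_\eta)}) + \cH(\sigma) \ge I_{R_\eta}(h)$. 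When $R_\eta \le Cr_h$ this already gives $\cH(\eta) \ge \Phi(h) - O(h^2/\log^2 h)$; for $R_\eta \gg r_h$, one combines the Dirichlet lower bound $I_{R_\eta}(h)$ with an isoperimetric count of the perimeter of the support $\{\eta_x\neq 0\}$ (each such boundary bond having jump $\ge 1$, contributing $\ge 1$ to $\cH$) via a Peierls/contour argument, so that the contribution of spread-out configurations is $\le e^{-\beta\Phi(h) + o(\beta h^2/\log^2 h)}$. Summing over $R_\eta$ then yields the matching upper bound $\pi(\eta_0=h) \le \exp\bigl[-\beta\Phi(h) + O(\beta h^2/\log^2 h)\bigr]$, completing~\eqref{eq-p(h)}.

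\textbf{Ratio and two-point bounds; main obstacle.} The ratio~\eqref{eq-ratio-p(h)/p(h-1)} does not follow directly from~\eqref{eq-p(h)} since its error $O(h^2/\log^2 h)$ swamps the target $h/\log h$, so I would prove it by a direct layer-removal map $\eta \mapsto \eta'$ with $\eta'_0 = h-1$ and $\cH(\eta') \le \cH(\eta) - \Theta(h/\log h)$ at bounded multiplicity (with a reverse layer-addition map providing the matching lower bound). For~\eqref{eq-p(h,h)}, repeat the decomposition above with the extra constraint $\eta_z = h$: since $\phi^{(r_h)}_z = h\bigl(1 - \tfrac{\log|z|+O(1)}{\log r_h}\bigr)$, forcing $\eta_z = h$ compels $\sigma_z$ to deviate by $\Omega(h/\log h)$ for small $|z|$, which by a 2-point capacity estimate in $B_{r_h}$ costs an extra $\Omega(h^2/\log^2 h)$ in Dirichlet energy; for $|z|\gtrsim r_h$ the cost is of order $\Phi(h)$, comfortably larger than needed. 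The principal obstacle is the Peierls step for $R_\eta \gg r_h$: because the integer constraint forbids tracking the sub-integer tail of the real-valued pinnacle, one must precisely enumerate integer extensions outside $B_{r_h}$ and show that none improves on the ansatz energy by more than $O(h^2/\log^2 h)$ --- any slack there would destroy the sharp $2\pi\beta$ constant in~\eqref{eq:inf-vol-ldp} and hence the 2-point concentration of $X_L$ in Theorem~\ref{mainthm:max}.
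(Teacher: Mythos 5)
Your overall strategy --- decomposing $\eta=\phi+\sigma$ with $\phi$ the harmonic minimizer of~\eqref{eq:variation} on a ball of radius $\asymp h/\log h$, exploiting harmonicity to kill the cross term, and bounding the integer residue $\sigma$ via low-temperature rigidity --- is indeed the mechanism the paper uses, and your computation of $\Phi(h)\sim 2\pi h^2/\log h$ matches Lemma~\ref{lem:dirichlet}. However, several concrete gaps remain.

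First, your proof of the ratio~\eqref{eq-ratio-p(h)/p(h-1)} is the weakest link, and in the paper it is actually the \emph{first} thing established, not a corollary of the one-point estimate: the paper's lower bound $e^{-c_0\beta h/\log h}$ is needed in Corollary~\ref{cor:2} to control the first term of~\eqref{eq-Br-upper-bnd}, which in turn feeds into the upper bound of~\eqref{eq-ratio-p(h)/p(h-1)} and into~\eqref{eq-p(h)}. Your proposed ``layer-removal/layer-addition map'' is not spelled out, and for the DG (unlike SOS) it is not clear what ``layer'' to add or remove: raising $\eta_0$ by $1$ costs $4+2\sum_{x\sim 0}(h-1-\eta_x)$, which is only $O(h/\log h)$ if you can first show that, \emph{conditionally on $\eta_0=h-1$, the four neighbors of the origin are typically at height $h-1-O(h/\log h)$}. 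That is precisely the non-trivial content of the paper's argument: it uses FKG plus the $\eta=\phi+\sigma$ decomposition plus a further change of variables $\sigma=\bar\sigma-\{\phi\}$, and then compares the law of $\bar\sigma$ to a \emph{non-homogeneous} DG measure $\nu$ (coupling constant $1/2$ on bonds incident to $B_R$, $1$ elsewhere) so that a Peierls argument on $0$-contours of $\nu$ controls $\bar\sigma_a$. None of this is present in your sketch, and without it you cannot conclude that the cost of raising the origin is $O(\beta h/\log h)$ rather than $O(\beta h)$.

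Second, your upper bound on $\pi(\eta_0=h)$ has two problems. The radius $R_\eta:=\min\{R:\eta|_{\partial B_R}\equiv 0\}$ need not be finite under $\pi$ (the rigid surface has nonzero density of fluctuating sites on every sphere); the paper instead introduces $\Gamma_1$, the outermost $1$-contour around the origin, bounds $|\Gamma_1|$ by Peierls (Lemma~\ref{lem:Br-bound}), and then conditions on the enclosing chain to reduce to a finite ball. More importantly, the pointwise energy inequality $\cH(\eta)\geq I_{R_\eta}(h)$ does not by itself yield a probability bound --- you still need to account for the entropy of the residue $\sigma$. This is exactly what Lemma~\ref{lem:1} supplies, via a two-sided comparison of $\sum_{\sigma:\sigma_0=0}e^{-\beta\cH(\sigma)}$ to the finite-volume partition function, controlled by the change of variables $\sigma=\bar\sigma-\{\phi\}$ and the inequality $\tfrac12 a^2-2ab\geq -2b^2$. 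Your sketch silently assumes the entropy contribution is absorbed into the $O(h^2/\log^2 h)$ error without giving the mechanism. Finally, for~\eqref{eq-p(h,h)} your capacity intuition (that $\eta_z=h$ forces $\sigma_z=\Omega(h/\log h)$ for small $|z|$) is correct, but the paper converts it into a probability estimate not by a direct Dirichlet-capacity bound but by a clever two-sided comparison: lower-bounding $\pi(\eta_z\geq h+1\mid\eta_0=h,E,F)$ by raising a single site, and upper-bounding it via the ratio~\eqref{eq-ratio-p(h)/p(h-1)}, then solving for $\pi(E\mid\eta_0=h,F)$; a direct 2-point capacity estimate would again need the entropy-control machinery you have not developed.
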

As we will next see, Eq.~\eqref{eq-p(h)} above translates into
\begin{equation}
   \label{eq-p(h)-2}
   \pi(\eta_0 = h) = \exp\left[- 2 \pi\beta \frac{h^2}{\log h} + O\left(\frac{h^2}{\log^2 h}\right)\right]
 \end{equation}
by substituting the value of $\Phi(h)$ as given by the following simple lemma.
\begin{lemma}
\label{lem:dirichlet}
Set $\kappa = \gamma + \frac32 \log 2$ where $\gamma$ is Euler's constant. For any $r>0$
\[
I_r(h)= \big(2\pi  +O(1/r)\big)\frac{h^2}{\log r
  + \kappa}\,.
\]
In particular, $I_r(h) \sim 2\pi h^2/\log h$ for any choice of $r\asymp h/\log h$.
\end{lemma}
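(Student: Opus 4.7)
The plan is to identify the variational minimizer, apply a discrete Green identity to reduce $I_r(h)$ to the Green's function of simple random walk at the origin, and then read off the asymptotics from the classical expansion of the two-dimensional potential kernel $a$. Since $\cD_{B_r}$ is a strictly convex quadratic form and the constraint set $\{\varphi:\varphi_0=h,\ \varphi|_{B_r^c}=0\}$ is an affine hyperplane, the infimum is attained at a unique minimizer $\phi$, characterized by first-order optimality as the discrete harmonic function with $\phi_0=h$, $\phi|_{\partial B_r}=0$ described in the introduction. By linearity of the Dirichlet problem, $\phi=h\hat\phi$ where $\hat\phi$ is the minimizer at $h=1$, so $I_r(h)=h^2\,I_r(1)$, and it suffices to compute $I_r(1)$.

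Next, applying the discrete Green identity $\sum_{x\sim y}(\varphi_x-\varphi_y)^2=-4\sum_x\varphi_x\Delta\varphi_x$ (valid with the paper's normalization $\Delta\varphi_x=\tfrac14\sum_{y\sim x}(\varphi_y-\varphi_x)$) to $\hat\phi$ extended by $0$ outside $B_r$ leaves only the term $x=0$, and one obtains
$$I_r(1)\;=\;-4\Delta\hat\phi_0\;=\;4-\sum_{y\sim 0}\hat\phi_y\;=\;4\,\bbP_0(\tau_{\partial B_r}<\tau_0^+)\;=\;\frac{4}{G_{B_r}(0,0)}\,,$$
where I used the representation $\hat\phi_y=\bbP_y(\tau_0<\tau_{\partial B_r})$, the one-step identity $\bbP_0(\tau_0^+<\tau_{\partial B_r})=\tfrac14\sum_{y\sim 0}\hat\phi_y$, and the standard identification of the non-return probability with $1/G_{B_r}(0,0)$. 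Consequently $I_r(h)=4h^2/G_{B_r}(0,0)$.

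Finally, to evaluate $G_{B_r}(0,0)$ I would apply optional stopping at $\tau_{\partial B_r}$ to the martingale $M_n=a(X_n)-\#\{k<n:X_k=0\}$ (a martingale since the 2D potential kernel $a$ is discrete harmonic off the origin, $a(0)=0$, and $a(\pm e_i)=1$), which yields $G_{B_r}(0,0)=\bbE_0\bigl[a(X_{\tau_{\partial B_r}})\bigr]$. Since every $x\in\partial B_r$ satisfies $|x|=r+O(1)$, the classical expansion $a(x)=\tfrac{2}{\pi}\log|x|+\tfrac{2\kappa}{\pi}+O(|x|^{-2})$ of Lawler (Prop.~1.6.7), with $\kappa=\gamma+\tfrac32\log 2$ (equivalently the constant $\tfrac{2\gamma+\log 8}{\pi}$ appearing there), gives $G_{B_r}(0,0)=\tfrac{2}{\pi}(\log r+\kappa)+O(1/r)$. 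Substituting into $I_r(h)=4h^2/G_{B_r}(0,0)$ yields the stated formula, and the ``in particular'' clause is immediate since $\log r=(1+o(1))\log h$ whenever $r\asymp h/\log h$. No step in this argument is really difficult; the only points demanding care are keeping track of the normalization factor $4$ in the Green identity (arising from the $\tfrac14$ in the paper's definition of $\Delta$) and matching Lawler's additive constant $(2\gamma+\log 8)/\pi$ with the value $2\kappa/\pi$ appearing in the lemma.
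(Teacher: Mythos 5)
Your proof is correct, and it takes a genuinely cleaner route than the paper's. The paper invokes the Hitting-Time Identity to write $I_r(h)=4h^2\sum_x\P_x(\tau_0<\tau_\partial)/\E_0\tau_\partial$, which then requires two separate estimates: the denominator via the $|S_n|^2-n$ martingale, and the numerator via an integral approximation of $\sum_x\P_x(\tau_0<\tau_\partial)$ using the potential-kernel expansion. Your argument instead uses the Green identity at the single vertex $0$ to read off $I_r(1)=4\,\P_0(\tau_{\partial B_r}<\tau_0^+)=4/G_{B_r}(0,0)$, so the effective conductance appears directly as a reciprocal of the Green's function, and the single optional-stopping computation $G_{B_r}(0,0)=\E_0[a(X_{\tau_{\partial B_r}})]$ finishes it. This bypasses the integral approximation entirely and localizes the whole calculation at the origin; in fact it even gives the marginally sharper error $O(1/(r\log r))$ on the multiplicative constant. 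The one thing worth being explicit about (and you implicitly are) is that for the Green-identity step $\sum_{x\sim y}(\hat\phi_x-\hat\phi_y)^2=-4\sum_x\hat\phi_x\Delta\hat\phi_x$ the only surviving term is $x=0$, since $\hat\phi$ is harmonic on $B_r\setminus\{0\}$ and vanishes on $B_r^c$ (so $\hat\phi_x\Delta\hat\phi_x=0$ even at exterior-boundary sites where $\Delta\hat\phi_x\neq0$). The constant check is also right: $2\kappa/\pi=(2\gamma+3\log 2)/\pi=(2\gamma+\log 8)/\pi$, matching Lawler.
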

\begin{proof}
Let $S_t$ denote simple random walk in $\Z^2$ and
write $\tau_\partial = \min\{t : |S_t| \geq r\}$. By the Hitting-Time Identity for electric networks (see, e.g.,~\cite{LP}*{Proposition~2.20} as well as~\cite{LP}*{\S2.1 and \S2.4} for further background),
\begin{equation}
  \label{eq-Ir-formula}
  I_r(h) = 4 h^2 \frac{\sum_x \P_x(\tau_0 < \tau_\partial)}{\E_0\tau_\partial}\,.
\end{equation}
(By Dirichlet's Principle, the effective conductance $\sC(0\leftrightarrow \partial B_r)$ in the network with unit conductances is precisely $I_r(h) / h^2$.
The Hitting-Time Identity, combined with Ohm's law, implies that $\E_0 \tau_\partial=4\sC(0\leftrightarrow\partial B_r)^{-1} \sum \P_x(\tau_0 < \tau_\partial)$,
with the factor $4$ due to the transition probability of simple random walk along an edge, and~\eqref{eq-Ir-formula} follows.)
For the denominator in~\eqref{eq-Ir-formula}, since $|S_n|^2 - n$ is a martingale in $\Z^2$, Optional Stopping (and the fact that $\tau_\partial$ is a.s.\ finite) implies that
\[ \E_0\tau_\partial = r^2 + O(r)\,,\]
where the $O(r)$ term is due to the fact that $r \leq |S_{\tau_\partial}| < r+1$.

As for the numerator in~\eqref{eq-Ir-formula}, we first approximate the sum by $\int_{1\leq |x|\leq r} \P_x(\tau_0< \tau_\partial)$ at the cost of a factor of $1+O(1/r)$.
Next , let $a(x) = \lim_{n\to\infty}(G_n(0)-G_n(x))$ denote the potential kernel, where $G_n(x)$ is the Green's function.
It is known (see, e.g.,~\cite{Lawler}*{\S1.6}) that
\[ a(x) = \frac2{\pi}\left(\log|x| + \gamma + \frac32\log2\right) + O\left(1/|x|^2\right)\,,\]
where $\gamma$ is Euler's constant, and that $a(S_t)$ is a martingale.
Thus, by Optional Stopping,
\begin{equation}
  \label{eq-phi-x-formula}
  \P_x(\tau_\partial < \tau_0) = \frac{\log|x|+\kappa+O\left(1/|x|^2\right)}{\log r + \kappa + O(1/r)}\,,
\end{equation}
where the $O(1/r)$ in the denominator (vs.\ the $O(1/r^2)$ error in estimating the potential kernel) is again since at time $\tau_\partial$ we can only assert that $r \leq |S_t| < r+1$ in $\Z^2$ (translating into an $O(1/r)$ additive error through the series expansion of $\log r$).
Therefore,
\begin{multline*}
\int\P_x(\tau_0< \tau_\partial)dx =  2\pi \int_1^r \left(1 - \frac{\log x + \kappa + O(x^{-2})}{\log r + \kappa + O(1/r)}\right)xdx \\
= \pi r^2 - 2\pi\frac{\frac12 r^2 \log r + \left(\frac12\kappa - \frac14\right)r^2 + O(\log r)}{\log r + \kappa + O(1/r)} = \left(\frac\pi2 + O(1/r)\right)\frac{r^2}{\log r + \kappa}\,,
\end{multline*}
and combining this with~\eqref{eq-Ir-formula} and the expression for $\E_0\tau_\partial$ completes the proof.
\end{proof}

Throughout the proof of Theorem~\ref{th:p(h)}, set  $ R = h/\log h $;
as outlined in~\S\ref{sec:intro-pfs},
we will show that the large deviation problem for the DG measure $\pi$ is well-approximated by the real-valued variational problem~\eqref{eq:variation} on a ball whose radius is of this order.
\subsection{Proof of Theorem~\ref{th:p(h)}, Eq.~\eqref{eq-ratio-p(h)/p(h-1)}}
We begin by proving the lower bound on the ratio
$\pi(\eta_0=h)/\pi(\eta_0=h-1)$.

Fix $c>0$ and consider the event $A$ in which $\eta_x\ge \lambda_h$  for all
four neighbors of the origin, where $\lambda_h:=h-1 - cR/8$. For any $\eta\in A$ such that $\eta_0=h-1$ we define $\eta'_x=\eta_x
+\delta_{0,x}$ so that $\eta'_0=h$ and
\[
\cH(\eta')-\cH(\eta) =4 + 2
\sum_{x: \, x\sim 0}(h-1-\eta_x)\le 4+ c R\,.
\]
Hence, by the FKG inequality,
\begin{align*}
\frac{\pi(\eta_0=h)}{\pi(\eta_0=h-1)}&\ge e^{-c\beta R-4\beta} \pi(A\mid \eta_0=h-1) \ge e^{-c\beta
  R-4\beta} \pi(\eta_{a}\ge \lambda_h\mid \eta_0=h-1)^4\,,
\end{align*}
where $a=(1,0)$ (say).
The sought lower bound would thus follow from showing that
\begin{equation}
  \label{eq-eta-lambdah-bound}
\pi(\eta_a\le \lambda_h\mid \eta_0=h-1)\le 1/2
\end{equation}
if the
constant $c$ entering in the definition of $\lambda_h$
is chosen to be large enough.

Given $\eta$ such that $\eta_0=h-1$, we define the new variables $\sigma=\{\sigma_x\}_{x\in\bbZ^2}$ by
\[
\eta_x = \phi_x +\sigma_x\,,
\]
where $\phi$ is the optimizer of the variational problem~\eqref{eq:variation} for the ball $B_R$ with height $h-1$ at the origin.
Notice that $\sigma_0=0$ and that $\sigma_x=\eta_x$ outside the ball
$B_R$. Moreover, using the fact that $\phi$ is harmonic inside
$B_R\setminus \{0\}$
and non-negative inside $B_R$,
\begin{align}
  \label{eq-eta-sigma-energy}
\cH(\eta)= \cH(\phi) + \cH(\sigma) -8\sum_{x\in \partial B_R}\sigma_x\, \Delta\phi_x\,.
\end{align}
Thus, the distribution $\mu$ of the variables
$\{\sigma_x\}_{x\in \bbZ^2}$ can be written as
\[
\mu(\sigma) \propto \exp\bigg[
-\beta \bigg(
\cH(\sigma)-8\sum_{x\in \partial B_R}\sigma_x \,\Delta\phi_x
\bigg)\bigg]\,,
\]
while insisting that within $B_R$ the variables $\sigma$
must take values which, after adding $\phi$, become integers.
Recalling that $\phi_x= h\bbP_x(\tau_0<\tau_{\partial B_R})$
as well as~\eqref{eq-phi-x-formula}, we can now take $c$ sufficiently large so that $h-1-cR/8 - \phi_a\le -(cR/16 +1)$. With this
choice, we get
\begin{gather*}
\pi(\eta_a\le \lambda_h\mid
\eta_0=h-1)
\le
\mu\Bigl(\sigma_a\le -(cR/16 +1)\Bigr)\,.
\end{gather*}
Notice that the event $\{\sigma_a\le -(cR/16 +1)\}$ is decreasing while
the function
\[
F(\sigma):=\exp\bigg(8\beta \sum_{x\in \partial B_R}\sigma_x \,\Delta\phi_x\bigg)
\]
is increasing since $\Delta \phi_x\ge 0$ for any $x\in \partial B_R$. Thus, we can apply FKG to get
that
\[
\mu\Bigl(\sigma_a\le -(cR/16+1)\Bigr)\le \tilde \mu\Bigl(\sigma_a\le -(cR/16+1)\Bigr)\,,
\]
where $\tilde \mu \propto \exp(-\beta \cH(\sigma))$.
To bound the latter probability from above, we make a
final change of variables:
for any $z\in \bbR$, put $z=\bar z +\{z\}$, where $z\in \bbZ$ and $\{z\}\in [-1/2,1/2)$. As $\phi_x+\sigma_x\in \bbZ$, clearly
$\{\sigma_x\}=-\{\phi_x\}$; thus, we can write the Hamiltonian of $\bar \sigma$
as
\begin{align}
  \label{eq-energy-sigma-barsigma}
\bar \cH(\bar \sigma):=\cH(\bar \sigma) + \cH(\{\phi\}) -2\sum_{x\sim
  y}\nabla_{x,y}\{\phi\}\nabla_{x,y}\bar\sigma\,,
\end{align}
where $\nabla_{x,y} f=f_x-f_y$. 
As usual, the constant term $\cH(\{\phi\}) $ does not play any role, and so
the law $\bar \mu$ of the variables $\bar \sigma$ satisfies
\[
\bar\mu(\bar\sigma) \propto \exp\bigg[-\beta \cH(\bar \sigma)+2\beta \sum_{x\sim
  y}\nabla_{x,y}\{\phi\}\nabla_{x,y}\bar\sigma\bigg]\,.
\]
Altogether, as $\{\sigma:\ \sigma_a\le -(cR/16+1)\}\subset
\{\sigma:\ \bar \sigma_a\le -cR/16\}$, the inequality~\eqref{eq-eta-lambdah-bound} will follow from showing that
\begin{equation}
  \label{eq-mu-bar-bound}
 \bar{\mu}\left(\bar\sigma_a  \leq -cR/16\right) \leq 1/2\,.
\end{equation}
To this end, we compare
$\bar \mu$ to a slight modification of the measure of the original DG.
Let
$\nu$ be the Gibbs measure of the \emph{non-homogeneous} DG
model on $\bbZ^2\setminus \{0\}$
with zero boundary condition at the origin, in which
the coupling constant for bonds inside $B_R$ (or on its interface) is equal to $1/2$ while it is
1 for the bonds outside $B_R$. More formally, $\nu=\lim_{\Lambda\uparrow
  \bbZ^2}\nu_\Lambda^0$ where $\nu_\Lambda^0$ is the Gibbs measure in $\Lambda$ with
zero boundary conditions at $\partial \Lambda\cup\{0\}$ and inverse
temperature $\beta$, associated to
the Hamiltonian
\[
\sum_{\substack{x\sim y \\ \{x,y\}\cap
      B_R=\emptyset}}(\nabla_{x,y}\bar\sigma)^2 + \frac 12 \sum_{\substack{x\sim y \\ \{x,y\}\cap
      B_R\neq \emptyset}}(\nabla_{x,y}\bar\sigma)^2\,.
\]
\begin{claim}
There exists some absolute $D>0$ such that  $\bar\mu(\bar \sigma)/\nu(\bar \sigma)\le  e^{D R^2}$ for $\beta$ large.
\end{claim}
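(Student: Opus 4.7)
The plan is to compute the pointwise Radon--Nikodym derivative $\bar\mu(\bar\sigma)/\nu(\bar\sigma)$ by direct comparison of the two Boltzmann weights, complete the square bond-by-bond on the interior/interface edges to isolate a non-positive contribution in the exponent, and then absorb the remaining $\beta$-dependent polarization term into the partition-function ratio $Z_\nu/Z_{\bar\mu}$ using a Poisson-summation / Gaussian-comparison argument on the shifted integer lattice; this last cancellation is what upgrades a naive $e^{O(\beta R^2)}$ bound into one with absolute $D$.

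For the pointwise step, I would first rewrite $\bar\mu$ with Hamiltonian
\[
Q_{\bar\mu}(\bar\sigma)=\sum_{\{x,y\}\cap B_R=\emptyset}(\nabla_{x,y}\bar\sigma)^2+\sum_{\{x,y\}\cap B_R\ne\emptyset}\bigl(\nabla_{x,y}(\bar\sigma-\{\phi\})\bigr)^2
\]
by absorbing the constant $\cH(\{\phi\})$ into the normalization, so that $\bar\mu(\bar\sigma)/\nu(\bar\sigma)=(Z_\nu/Z_{\bar\mu})\exp[-\beta(Q_{\bar\mu}(\bar\sigma)-Q_\nu(\bar\sigma))]$. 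The exterior-bond contributions cancel, and on each interior/interface bond the algebraic identity
\[
\bigl(\nabla(\bar\sigma-\{\phi\})\bigr)^2-\tfrac12(\nabla\bar\sigma)^2=\tfrac12\bigl(\nabla\bar\sigma-2\nabla\{\phi\}\bigr)^2-\bigl(\nabla\{\phi\}\bigr)^2
\]
gives, after summing, $-\beta(Q_{\bar\mu}-Q_\nu)=-\tfrac{\beta}{2}\sum(\nabla\bar\sigma-2\nabla\{\phi\})^2+\beta\sum(\nabla\{\phi\})^2$, with the first sum non-negative. Using $|\nabla\{\phi\}|\le 1$ (since $\{\phi\}\in[-1/2,1/2)$) together with $|B_R|=O(R^2)$, the second sum is at most $C_1R^2$ for some absolute $C_1$; hence pointwise $\bar\mu/\nu\le(Z_\nu/Z_{\bar\mu})\exp[\beta C_1R^2]$.

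For the partition-function step, I would cancel the $\exp[\beta C_1R^2]$ factor against a matching lower bound on $Z_{\bar\mu}/Z_\nu$. The key is the change of variables $\tau_x=\bar\sigma_x-\{\phi_x\}$ on $B_R$, which recasts $Z_{\bar\mu}$ as a Gaussian-weighted sum $\sum_{\tau\in\prod_{x\in B_R}(\Z-\{\phi_x\})}e^{-\beta\cH(\tau)}$ over a shifted integer lattice. Applying multi-dimensional Poisson summation to this sum (valid for $\beta$ large, after replacing the lattice sum by its Gaussian integral at an entropy cost $e^{O(R^2)}$ coming from the integer constraint on the $O(R^2)$ sites of $B_R$), and matching the shifted-lattice normalization to the unshifted DG partition function via the Green's function on $B_R$, yields $Z_\nu/Z_{\bar\mu}\le e^{C_2R^2}\exp[-\beta\sum(\nabla\{\phi\})^2]$ for an absolute $C_2$ and $\beta$ large. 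Multiplying by the pointwise estimate from the previous paragraph cancels the two $\beta\sum(\nabla\{\phi\})^2$ terms exactly, leaving $\bar\mu(\bar\sigma)/\nu(\bar\sigma)\le e^{(C_1+C_2)R^2}$, i.e.\ an absolute $D=C_1+C_2$.

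The hard part will be the rigorous Poisson-summation / Gaussian-comparison step in the coupled, non-product setting: the quadratic form on $B_R$ is coupled via the discrete Laplacian, so one must first approximate the lattice sum by its Gaussian integral (valid for $\beta$ large, with $e^{O(R^2)}$ discretization error), then carry out the Poisson identity on the resulting Gaussian integral over the shifted lattice (the one-dimensional building block is $\sum_{n\in\Z}e^{-\beta(n-c)^2}=\sqrt{\pi/\beta}(1+O(e^{-\pi^2/\beta}))$, uniform in $c$, which must be lifted via the Green's function of $B_R$ to control the Fourier modes), and finally compare to the unshifted partition function to extract the $\exp[\beta\sum(\nabla\{\phi\})^2]$ factor that cancels the polarization. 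The explicit harmonic representation $\phi_x=h\,\P_x(\tau_0<\tau_{\partial B_R})$ and the resulting bound $\sum(\nabla\{\phi\})^2=O(R^2)$ are the essential inputs for making every error term absolute.
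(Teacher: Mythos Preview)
Your pointwise step is correct and coincides with the paper's: the completion of the square yields
\[
\frac{\bar\mu(\bar\sigma)}{\nu(\bar\sigma)}\le \frac{Z_\nu}{Z_{\bar\mu}}\,e^{\beta\sum_{\text{int}}(\nabla\{\phi\})^2}\le \frac{Z_\nu}{Z_{\bar\mu}}\,e^{c\beta R^2}\,.
\]
The problem is your partition-function step. The Poisson-summation mechanism you invoke works in the \emph{wrong} regime of $\beta$. The one-dimensional identity you cite,
\[
\sum_{n\in\Z}e^{-\beta(n-c)^2}=\sqrt{\pi/\beta}\,\bigl(1+O(e^{-\pi^2/\beta})\bigr),
\]
has an error term $O(e^{-\pi^2/\beta})$ that is small only when $\beta$ is \emph{small}; for large $\beta$ this quantity is close to $1$ and gives no control. (Indeed, for large $\beta$ the sum is dominated by the nearest integer to $c$ and equals $e^{-\beta\,\mathrm{dist}(c,\Z)^2}(1+O(e^{-\beta}))$, which depends essentially on $c$.) Consequently your claimed cancellation $Z_\nu/Z_{\bar\mu}\le e^{C_2R^2}\exp[-\beta\sum(\nabla\{\phi\})^2]$ cannot be extracted this way, and the surrounding remarks about ``replacing the lattice sum by its Gaussian integral at an entropy cost $e^{O(R^2)}$'' do not constitute a valid argument either: at large $\beta$ the Gaussian is sharply peaked and the discrepancy between sum and integral is not a uniform $e^{O(R^2)}$ factor.

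The paper handles $Z_\nu/Z_{\bar\mu}$ by a one-line application of Jensen's inequality:
\[
\frac{Z_{\bar\mu}}{Z_\nu}=\E_\nu\Bigl[\exp\bigl(-\beta\textstyle\sum_{\text{int}}\bigl[\tfrac12(\nabla\bar\sigma)^2-2\nabla\{\phi\}\nabla\bar\sigma\bigr]\bigr)\Bigr]
\ge \exp\Bigl(-\tfrac{\beta}{2}\textstyle\sum_{\text{int}}\E_\nu[(\nabla\bar\sigma)^2]\Bigr),
\]
using the $\bar\sigma\mapsto-\bar\sigma$ symmetry of $\nu$ to kill the cross term, together with $\E_\nu[(\nabla\bar\sigma)^2]=\epsilon(\beta)\to0$ (a standard Peierls estimate for the low-temperature DG with coupling $\beta/2$). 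This gives $Z_\nu/Z_{\bar\mu}\le e^{\epsilon(\beta)R^2}$, and combined with the pointwise bound yields $D=c\beta+\epsilon(\beta)$. Note that this $D$ depends on $\beta$; the word ``absolute'' in the claim should be read as ``uniform in $\bar\sigma$, $h$ and $R$'', not ``independent of $\beta$''. This is all that is needed downstream, since the Peierls bound $\nu(\bar\sigma_a\le -cR/16)\le e^{-\alpha(c)R^2}$ has $\alpha(c)\asymp\beta c^2$, so taking $c$ large (independently of $\beta$) beats $D=O(\beta)$. You were chasing a stronger, $\beta$-free statement that is neither proved nor required.
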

\begin{proof}
Without loss of generality, and only to give a sense to the  partition
functions that will appear below, assume that both $\bar \mu$ and $\nu$ are restricted to a ball
of radius $L\gg R$ with zero boundary conditions. (Our bounds will of course be
uniform in $L$.)

Letting $\cZ_{\bar \mu},\cZ_\nu$ denote the partition functions of $\bar \mu$ and $\nu$
respectively, we have
\begin{align*}
  \frac{\bar\mu(\bar \sigma)}{\nu(\bar \sigma)}
= \frac{\cZ_\nu}{\cZ_{\bar
      \mu}} \exp\biggl[-\beta\bigg(\sum_{\small\substack{x\sim y \\ \{x,y\}\cap
      B_R\neq \emptyset}}\tfrac 12(\nabla_{x,y}\bar \sigma)^2
  -2\nabla_{x,y}\{\phi\}\nabla_{x,y}\bar\sigma\bigg)\biggr]
\le \frac{\cZ_\nu}{\cZ_{\bar
      \mu}} e^{c\beta R^2}
\end{align*}
for an absolute $c>0$, where the inequality followed from the fact that for any $a,b$ we have
$\frac12 a^2 - 2a b \geq -2b^2$, and so (using $b\in[-1/2,1/2)$), the above exponent is at most $\exp[\frac12 \beta \cE(B_R)]$, in which $\cE(B_R)\asymp R^2$ is the number
of bonds incident to the ball $B_R$.

The ratio $\cZ_{\bar \mu}/\cZ_\nu$ can be bounded from below
using
Jensen's inequality by
\begin{align*}
\frac{\cZ_{\bar \mu}}{\cZ_\nu}&=\E_\nu \Bigg[\exp\biggl[-\beta\,\biggl(\sum_{\small\substack{x\sim y \\ \{x,y\}\cap
      B_R\neq \emptyset}}\tfrac{1}{2}(\nabla_{x,y}\bar \sigma)^2
  -2\nabla_{x,y}\{\phi\}\nabla_{x,y}\bar\sigma\,\biggr)\biggr]\Bigg]\\
&\ge
\exp\bigg[-\beta\bigg(\sum_{\small\substack{x\sim y\\ \{x,y\}\subset
    B_R}}\tfrac 12\,\E_\nu\left[(\nabla_{x,y}\bar\sigma)^2\right]
  -2\nabla_{x,y}\{\phi\}\E_\nu\left[\nabla_{x,y}\bar\sigma\right]\bigg)\bigg]
  \end{align*}
(here denoting by $\E_\nu$ expectation over $\sigma$ w.r.t.\ $\nu$), which in turn
is at least $ e^{-\epsilon(\beta)R^2}$ for some $\epsilon(\beta)$ that vanishes as $\beta\to \infty$. This completes the proof.
\end{proof}
Following the above claim, in order to prove~\eqref{eq-mu-bar-bound} (and thereby complete the proof of the lower bound on
$\pi(\eta_0=h)/\pi(\eta_0=h-1)$) it will suffice to show that
\begin{equation}
  \label{eq-nu-bd}
\nu(\bar\sigma_a\le -cR/16)\le
e^{-\alpha(c)R^2}\quad\mbox{ for some $\alpha(c)$ with $\lim_{c\to \infty}\alpha(c)=\infty$}\,.
\end{equation}
We claim that this is
obvious because the vertex $a$ is a nearest neighbor of the origin, at which
$\bar\sigma_0=0$. Call a closed circuit in the dual lattice of $\Z^2$
a \emph{$0$-contour of $\eta$} if it separates negative and non-negative heights in $\eta$ (i.e., it consists of bonds dual to edges $x\sim y$ with $\eta_x<0$ and $\eta_y\geq 0$; see~\S\ref{sec:level-line-tools} for a formal (more general) definition).
If $\bar\sigma_a\le -cR/16$, then $\eta$ contains some 0-contour $\Gamma_0$ that goes through the bond dual to the edge $0\sim a$.
The energy cost of $\Gamma_0$ is at least $\frac12 \beta \big( |\Gamma_0| -1 + (cR/16)^2\big) $ (with the factor 1/2 due to the modified coupling constants in $\nu$)
and~\eqref{eq-nu-bd} now follows from a Peierls argument (cf., e.g., Claim~\ref{cl:eta-eq-geq} below).
This establishes the sought lower bound.

It remains to prove the upper bound in~\eqref{eq-ratio-p(h)/p(h-1)}. We start with a na\"ive Peierls argument that gives a weaker bound of $\epsilon(\beta)$ (vs.\ the targeted $\exp(-c_1\beta h/\log h)$ from~\eqref{eq-ratio-p(h)/p(h-1)}).
\begin{claim}
  \label{cl:eta-eq-geq}
  For any finite connected subset $V\subset\Z^2$ and any $z\in V$ and $h \geq 0$, we have
  \[ \pi_V(\eta_z > h) \leq \epsilon(\beta) \pi_V(\eta_z=h)\qquad\mbox{where $\epsilon(\beta)\to 0$ as $\beta\to\infty$}\,.\]
\end{claim}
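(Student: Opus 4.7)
The statement is a naive Peierls bound on the tail of $\eta_z$, so I would run the standard contour-subtraction argument for the DG Hamiltonian. Given $\eta$ with $\eta_z>h$, since $\eta\equiv 0\le h$ outside $V$, any lattice path from $z$ to $\partial V$ must contain some bond $\{x,y\}$ with $\eta_x>h\ge\eta_y$. Collecting all such ``frustrated'' bonds in the dual lattice yields a closed structure separating $z$ from $\partial V$; let $\Gamma=\Gamma(\eta)$ be the outermost simple dual circuit around $z$ whose dual edges are all frustrated in the consistent direction (interior endpoint satisfies $\eta>h$, exterior endpoint $\eta\le h$). Such a $\Gamma$ exists whenever $\eta_z>h$.

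Now define the map $T_\Gamma(\eta)=\eta'$, where $\eta'_x=\eta_x-1$ for $x$ in the interior of $\Gamma$ and $\eta'_x=\eta_x$ otherwise. Bonds entirely inside or entirely outside $\Gamma$ have unchanged gradient. For a bond crossing $\Gamma$, with $x$ interior and $y$ exterior, the old gradient $g=\eta_x-\eta_y\ge 1$ becomes $g-1\ge 0$, so its contribution to $\cH$ changes by $(g-1)^2-g^2=1-2g\le -1$. Summing over the $|\Gamma|$ crossing bonds,
\[
\cH(\eta)-\cH(T_\Gamma\eta)\ \ge\ |\Gamma|.
\]
Let $A_\Gamma=\{\eta:\Gamma(\eta)=\Gamma\}$. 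These sets partition $\{\eta_z>h\}$, and $T_\Gamma$ is injective on $A_\Gamma$ (given $T_\Gamma\eta$ and $\Gamma$, recover $\eta$ by adding $1$ inside $\Gamma$). Moreover $(T_\Gamma\eta)_z=\eta_z-1\ge h$, so $T_\Gamma(A_\Gamma)\subseteq\{\eta_z\ge h\}$. Hence
\[
\pi_V(A_\Gamma)\ \le\ e^{-\beta|\Gamma|}\pi_V\bigl(T_\Gamma(A_\Gamma)\bigr)\ \le\ e^{-\beta|\Gamma|}\pi_V(\eta_z\ge h).
\]

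Summing over $\Gamma$ and applying the standard Peierls count (the number of simple dual circuits around $z$ of length $L$ is at most $L\cdot 3^L$) yields
\[
\pi_V(\eta_z>h)\ \le\ \pi_V(\eta_z\ge h)\sum_{L\ge 4}L\,3^L e^{-\beta L}\ =:\ \delta(\beta)\,\pi_V(\eta_z\ge h),
\]
with $\delta(\beta)\to 0$ as $\beta\to\infty$. Writing $\pi_V(\eta_z\ge h)=\pi_V(\eta_z=h)+\pi_V(\eta_z>h)$ and rearranging, for $\beta$ large enough that $\delta(\beta)\le 1/2$ we conclude $\pi_V(\eta_z>h)\le 2\delta(\beta)\,\pi_V(\eta_z=h)$, proving the claim with $\epsilon(\beta)=2\delta(\beta)/(1-\delta(\beta))$. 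The only delicate point is the definition of $\Gamma$: one must choose the outermost frustrated circuit around $z$ (rather than, say, the full dual boundary of the superlevel component $\{\eta>h\}\ni z$, which may consist of several loops and create interior pockets that spoil the clean energy estimate). Once $\Gamma$ is a single simple circuit with the prescribed orientation, the rest of the argument is routine.
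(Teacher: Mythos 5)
Your proof is correct and takes essentially the same route as the paper's: a Peierls contour‑subtraction argument with a per‑bond energy gain of at least $1$ on the deflated contour, followed by the standard entropy count and a rearrangement (the paper instead states the one‑step bound $\pi_V(\eta_z=h')\le e^{-\beta}\pi_V(\eta_z=h'-1)$ and implicitly telescopes it, but this is the same computation). The remark about choosing the outermost frustrated circuit around $z$ is the right way to make the contour well defined and matches the paper's use of the outermost level‑line; in the DG model there is no admissibility constraint, so $T_\Gamma$ is always valid and no further care is needed.
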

\begin{proof}
If $\eta_z\geq h$ for $h\geq 1$ then (by the zero boundary) $\eta$ contains an $h$-contour (the analogue of the 0-contour from above, i.e., separating $x\sim y$ with $\eta_x< h$ and $\eta_y\geq h$) surrounding $z$ in $V$.
If a fixed circuit $\gamma$ is an $h$-contour of $\eta$, then the bijection taking $\eta\mapsto \eta-1$ in the interior of $\gamma$ decreases the Hamiltonian
by at least $|\gamma|$ (as $(b-a)^2\ge 1 +(b-a-1)^2$ for any $b\ge 1$ and $a\le 0$).
This $\gamma$ must intersect the $x$-axis at distance at most $|\gamma|/2$ from $z$,
from which there are at most $4^{|\gamma|}$ choices for its path, so
\begin{align*}
\pi_V(\eta_0=h) &\leq \sum_{\ell \geq 4} \ell \big(4 e^{-\beta}\big)^{\ell} \pi_V(\eta_0=h-1) \leq e^{-\beta}\pi_V(\eta_0=h-1)\,,
\end{align*}
where the last inequality holds for large enough $\beta$, and the desired result follows.
\end{proof}
To boost this upper bound to its required form, we need the following result.
 \begin{lemma}
 \label{lem:Br-bound}
Let $V\subset \Z^2$ with $0\in V$. For any $h \geq 1$ and $r\geq 1$,
\begin{align}
  \label{eq-Br-upper-bnd}
  \pi_V(\eta_0=h) \leq e^{-\frac34 \beta r}\pi_V(\eta_0=h-1)  + e^{\epsilon(\beta) r}\pi_{B_{r}}(\eta_0 = h)\,,
\end{align}
where $\epsilon(\beta)\to 0$ as $\beta\to\infty$.
 \end{lemma}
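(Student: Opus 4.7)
The plan is to split $\pi_V(\eta_0=h)$ according to the event $\cA$ that some $h$-contour of $\eta$ around the origin has length at least $r$. On $\cA^c\cap \{\eta_0=h\}$, every $h$-contour around $0$ has perimeter less than $r$, hence diameter less than $r/2$; this forces the outermost such contour into $B_r$, and in particular $\eta|_{\partial B_r}\leq h-1$ on this event.

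For the large-contour term $\pi_V(\eta_0=h,\cA)$, I would repeat the bijective Peierls argument of Claim~\ref{cl:eta-eq-geq}: the map $\eta \mapsto \eta - \mathbf{1}_{\mathrm{Int}(\gamma)}$ sends $\{\eta_0=h,\, \Gamma^{*}=\gamma\}$ into $\{\eta_0=h-1\}$ while decreasing $\cH$ by at least $|\gamma|$. Summing over possible outermost contours of length $\ell\geq r$ (each with at most $\ell\cdot 4^\ell$ locations, since the contour must cross the positive $x$-axis within distance $\ell/2$ of the origin), this term is bounded by $\sum_{\ell\geq r}\ell\,(4 e^{-\beta})^\ell\, \pi_V(\eta_0=h-1) \leq e^{-\frac{3}{4}\beta r}\,\pi_V(\eta_0=h-1)$ for $\beta$ large, producing the first term on the right-hand side.

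For the small-contour term $\pi_V(\eta_0=h,\cA^c)$, the plan is to use the DLR property to localize to $B_r$: conditioning on $\xi:=\eta|_{\partial B_r}$, the law of $\eta$ inside $B_r$ is $\pi_{B_r}^{\xi}$, so
\[
\pi_V(\eta_0=h,\cA^c) \;\leq\; \E_{\pi_V}\!\left[\mathbf{1}_{\{\xi\leq h-1\}}\,\pi_{B_r}^{\xi}(\eta_0=h)\right].
\]
It then remains to prove a boundary-modification inequality of the form $\pi_{B_r}^{\xi}(\eta_0=h)\leq e^{\epsilon(\beta) r}\,\pi_{B_r}^{0}(\eta_0=h)$ after averaging the $\pi_V$-weight over the typical values of $\xi\leq h-1$.

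The hard part is precisely this boundary-modification step: a naive bond-by-bond comparison yields only a bound of $e^{C\beta r}$, which is too weak. The plan is to adapt the $\bar\mu$-to-$\nu$ switching-measure idea from the lower-bound proof of~\eqref{eq-ratio-p(h)/p(h-1)}: express the ratio $\pi_{B_r}^{\xi}(\eta_0=h)/\pi_{B_r}^{0}(\eta_0=h)$ as a ratio of partition functions, change variables by an integer-valued extension $\tilde\xi$ of $\xi$ into $B_r$ with $\tilde\xi_0=0$, and apply Jensen's inequality to control the cross-term. The Peierls rigidity bound on $\pi_V$ confines a typical $\xi$ to an $O(1)$ window on $\partial B_r$, so $\tilde\xi$ can be chosen with $O(r)$ total gradient energy, producing the advertised factor $e^{\epsilon(\beta)r}$ with $\epsilon(\beta)\to 0$ as $\beta\to\infty$.
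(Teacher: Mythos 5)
Your large-contour step is sound and matches the paper in spirit (the paper uses the outermost \emph{$1$-contour} $\Gamma_1$, you use the outermost $h$-contour; for the Peierls bound $e^{-\frac34\beta r}\pi_V(\eta_0=h-1)$ either choice works). The small-contour step has a genuine gap. First, your claim that on $\cA^c\cap\{\eta_0=h\}$ one has $\eta\restriction_{\partial B_r}\leq h-1$ is false: all $h$-contours \emph{around the origin} being short does not preclude a separate cluster of $\{\eta\geq h\}$ touching $\partial B_r$. Second, and more substantially, even granting $0\leq\xi\leq h-1$ on $\partial B_r$, the comparison you want, $\pi_{B_r}^\xi(\eta_0=h)\leq e^{\epsilon(\beta)r}\pi_{B_r}^0(\eta_0=h)$, runs against FKG: raising the boundary from $0$ to $\xi\geq 0$ \emph{increases} $\pi(\eta_0\geq h)$, so the inequality cannot hold pointwise (e.g.\ $\xi\equiv h-1$ gives $\pi_{B_r}^\xi(\eta_0\geq h)=\pi_{B_r}^0(\eta_0\geq 1)=\Theta_\beta(1)$, exponentially larger than $\pi_{B_r}^0(\eta_0\geq h)$). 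Making it work ``on $\pi_V$-average'' would need a separate Peierls control of the rare large-$\xi$ boundary, and the Jensen/switching-measure machinery you invoke is aimed at quadratic rounding errors, not at absorbing a worst-case $e^{\Theta(\beta r)}$ boundary effect; nothing in the sketch closes this.

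The paper sidesteps all of this by working with the outermost $1$-contour $\Gamma_1$ rather than $h$-contours. On $\{|\Gamma_1|\leq r\}$ the \emph{exterior} vertex boundary $C_1$ of $\Gamma_1$ is a circuit of at most $r$ sites, contained in $B_r$, along which $\eta\leq 0$. One conditions on this circuit, uses monotonicity to replace $\{\eta\restriction_{C_1}\leq 0\}$ by $\{\eta\restriction_{C_1}=0\}$ (this is the correct direction: lowering to $0$ only helps), arriving at $\pi_{V_{C_1}}(\eta_0\geq h)$ with clean zero boundary conditions. The remaining step is a region-monotonicity estimate, Eq.~\eqref{eq-pi-region-compare}: for $V_1\subset V_2$ with $0\in V_1$, one has $\pi_{V_2}(\eta_0\geq h)\geq e^{-\epsilon(\beta)|\partial V_1|}\pi_{V_1}(\eta_0\geq h)$, proved by one application of FKG plus the rigidity estimate $\pi_{V_2}(\eta_x\neq 0)\leq\epsilon(\beta)$ from Claim~\ref{cl:eta-eq-geq}. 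Applied with $V_1=V_{C_1}$, $V_2=B_r$, this produces the factor $e^{\epsilon(\beta)r}$ without any change of variables, partition-function ratios, or delicate averaging over boundary data. In short: use $1$-contours so you land on a zero boundary, then compare nested regions, not nested boundary conditions.
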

 \begin{proof}
For any $\eta$ with $\eta_0\geq 1$ let $\Gamma_1=\Gamma_1(\eta) $ be
the outermost $1$-contour around the origin in $\eta$.
By the same Peierls argument that was used in the proof of Claim~\ref{cl:eta-eq-geq},
\begin{align*}
\pi\left(\eta_0=h\,,\, |\Gamma_1|\geq r\right) &\le \sum_{\ell\geq r} \ell \big(4 e^{-\beta}\big)^{\ell} \pi(\eta_0=h-1) \leq e^{-\frac34 \beta r} \pi(\eta_0 = h-1)
\end{align*}
if $\beta$ is suitably large.
On the other hand, the event $|\Gamma_1|\le r$
implies that in $B_{r}$ there exists a chain of sites enclosing the origin, with
length at most $r$,  where the
heights are at most zero. If $C_1=C_1(\Gamma_1)$ denotes this chain of sites, then
\begin{equation}\label{eq-pi-eta-small-Gamma1} \pi(\eta_0\geq h\,,\,|\Gamma_1|\leq r) \leq
\pi\left(\eta_0 \geq h \mid \eta\restriction_{\partial C_1}\leq 0\right)
\leq \pi_{C_1}(\eta_0\geq h) \leq \max_{\substack{\Lambda\subset B_r \\ |\partial \Lambda|\leq r}} \pi_{\Lambda}(\eta_0 \geq h)\,,
\end{equation}
where we used monotonicity to replace the condition $\{\eta\restriction_{C_1}\leq 0\}$ by $\{\eta\restriction_{C_1}=0\}$.

Finally, observe that for any $r\geq 1$ and any sets $V_2\supset V_1 \ni 0$ (including $V_2=\Z^2$),
\begin{align}\label{eq-pi-region-compare}
\pi_{V_2}(\eta_0\geq h) \geq e^{-\epsilon(\beta) |\partial V_1|}\pi_{V_1}(\eta_0\geq h)\,,
\end{align}
since, again by monotonicity (now allowing us to replace $\{\eta\restriction_{\partial V_1}\geq 0\}$ by $\{\eta\restriction_{\partial V_1}=0\}$),
\begin{align*}
\pi_{V_2}(\eta_0\geq h)&\geq \pi_{V_2}(\eta_0\ge h\,,\, \eta\restriction_{\partial V_1} \geq 0) \geq \pi_{V_1}(\eta_0\ge h)\,\pi_{V_2}(\eta\restriction_{\partial V_1} \ge 0)\\
&\geq \pi_{V_1}(\eta_0\ge h)\prod_{x\in \partial V_1}\pi_{V_2}(\eta_x\ge 0) \geq e^{-\epsilon(\beta)|\partial V_1|}\,\pi_{V_1}(\eta_0\geq h)\,,
\end{align*}
where the inequality between the lines is by FKG, and the last transition used that $\pi_{V_2}(\eta_x\neq 0)<\epsilon(\beta)$ thanks to Claim~\ref{cl:eta-eq-geq}.
In particular, the right-hand side of~\eqref{eq-pi-eta-small-Gamma1} is at most
$e^{\epsilon(\beta)r}\pi_{B_r}(\eta_0\ge h)$, and a final application of Claim~\ref{cl:eta-eq-geq} concludes the proof.
 \end{proof}

\begin{corollary}
\label{cor:2}
There exists some $\epsilon(\beta)$ with $\lim_{\beta\to\infty}\epsilon(\beta)=0$ such that, for any $r\geq 1$,
\[
 \pi(\eta_0=h)\geq (1-\epsilon(\beta))e^{-\epsilon(\beta) r}\pi_{B_{r}}(\eta_0=h)\,,
\]
whereas for any $r\geq 2c_0 R$ with $c_0$ from~\eqref{eq-ratio-p(h)/p(h-1)},
\[
\pi(\eta_0=h)\le (1+\epsilon(\beta))e^{\epsilon(\beta) r} \pi_{B_{r}}(\eta_0= h)\,.\]
 \end{corollary}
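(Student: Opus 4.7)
The statement is a packaging of ingredients already assembled in Lemma~\ref{lem:Br-bound}, Claim~\ref{cl:eta-eq-geq}, and the lower bound in~\eqref{eq-ratio-p(h)/p(h-1)}, and so I would prove it by simply chaining these together. I do not expect any serious obstacle; the only small point of care is turning the ``$\geq h$'' inequalities produced by FKG/monotonicity into the ``$=h$'' form required by the statement.

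For the lower bound, I would take $V_1=B_r$ and let $V_2$ run through an exhausting sequence of boxes in~\eqref{eq-pi-region-compare}. Since $|\partial B_r|\leq 8r$, that inequality yields
$$\pi(\eta_0\geq h)\;\geq\; e^{-\epsilon(\beta)r}\,\pi_{B_r}(\eta_0\geq h)\,.$$
Now Claim~\ref{cl:eta-eq-geq} gives $\pi(\eta_0\geq h+1)\leq\epsilon(\beta)\pi(\eta_0=h)$, so
$$\pi(\eta_0=h)\;\geq\;(1-\epsilon(\beta))\,\pi(\eta_0\geq h)\;\geq\;(1-\epsilon(\beta))e^{-\epsilon(\beta)r}\,\pi_{B_r}(\eta_0\geq h)\,,$$
and the right-hand side dominates $(1-\epsilon(\beta))e^{-\epsilon(\beta)r}\pi_{B_r}(\eta_0=h)$.

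For the upper bound, I would apply Lemma~\ref{lem:Br-bound} with $V$ running through boxes exhausting $\Z^2$, yielding in the limit
$$\pi(\eta_0=h)\;\leq\; e^{-\frac34\beta r}\,\pi(\eta_0=h-1)\;+\;e^{\epsilon(\beta)r}\,\pi_{B_r}(\eta_0=h)\,.$$
The lower bound in~\eqref{eq-ratio-p(h)/p(h-1)} (already proved earlier in this section) provides $\pi(\eta_0=h-1)\leq e^{c_0\beta R}\pi(\eta_0=h)$, so under the hypothesis $r\geq 2c_0 R$ we have $-\tfrac34\beta r+c_0\beta R\leq-\tfrac14\beta r$, hence
$$\bigl(1-e^{-\frac14\beta r}\bigr)\pi(\eta_0=h)\;\leq\;e^{\epsilon(\beta)r}\,\pi_{B_r}(\eta_0=h)\,.$$
For $\beta$ large enough the prefactor on the left exceeds $1/2$ uniformly in $r\geq 1$, and absorbing the resulting $(1-e^{-\frac14\beta r})^{-1}\leq 1+2e^{-\frac14\beta r}$ into $(1+\epsilon(\beta))$ gives the stated inequality. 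The only thing to double-check is that the passage to the infinite-volume limit is legitimate, but this is standard since the events $\{\eta_0=h\}$ are local and $\pi_V\to\pi$ weakly along exhausting sequences in the rigid regime.
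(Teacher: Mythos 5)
Your proof is correct and takes essentially the same route as the paper: apply \eqref{eq-pi-region-compare} with $V_1=B_r$, $V_2=\Z^2$ together with Claim~\ref{cl:eta-eq-geq} for the lower bound, and combine Lemma~\ref{lem:Br-bound} with the already-established lower bound on $\pi(\eta_0=h)/\pi(\eta_0=h-1)$ from~\eqref{eq-ratio-p(h)/p(h-1)} to absorb the first term for $r\geq 2c_0 R$ in the upper bound. The only difference from the paper is that you spell out the arithmetic rearrangement and the passage to the infinite-volume limit more explicitly, which is harmless.
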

\begin{proof}
Letting $V_1=B_r$ and $V_2=\Z^2$ in~\eqref{eq-pi-region-compare} gives $\pi(\eta_0\geq h) \geq e^{-\epsilon(\beta)r}\pi_{B_r}(\eta_0\geq h)$, and
Claim~\ref{cl:eta-eq-geq} extends this lower bound to $\pi(\eta_0=0)$ via an extra $(1-\epsilon(\beta))$-factor.

For the upper bound we appeal to Lemma~\ref{lem:Br-bound}, and examine the two terms featured in the right-hand side of~\eqref{eq-Br-upper-bnd}.
We will retain the second term, $e^{\epsilon(\beta)}\pi_{B_r}(\eta_0=h)$, as our main term in the upper bound,
while the first term, using our lower bound on $\pi(\eta_0=h)/\pi(\eta_0=h-1)$ from~\eqref{eq-ratio-p(h)/p(h-1)}, is
\[ e^{-\frac34\beta r} \pi(\eta_0=h-1) \leq e^{-\frac34\beta r+c_0\beta R}\pi(\eta_0=h) \leq e^{-\beta r/4}\pi(\eta_0=h)\]
for any $r\geq 2c_0 R$. The latter is at most $\epsilon(\beta)\pi(\eta_0=h)$, which concludes the proof.
\end{proof}
We are now ready to establish the upper bound on
$\pi(\eta_0=h)/\pi(\eta_0=h-1)$.
\begin{lemma}
  \label{lem:1}
With $I_r(h)$ as in~\eqref{eq:variation},
there is a constant $c'>0$ so that, for any $r\geq 1$,
\begin{align*}
  \exp\left(-\beta I_r(h) - c' r^2\right) \leq \pi_{B_{r}}(\eta_0=h)\leq \exp\left(-\beta I_r(h) +c' r^2\right)\,.
\end{align*}
\end{lemma}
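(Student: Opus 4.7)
The approach rests on an exact variational identity: for any $\eta:\Z^2\to\R$ with $\eta_0=h$ and $\eta\equiv 0$ on $B_r^c$,
\begin{equation*}
\cH(\eta)=I_r(h)+\cH(\eta-\phi),
\end{equation*}
where $\phi$ is the real-valued minimizer of~\eqref{eq:variation} on $B_r$ with value $h$ at the origin. Indeed, writing $g:=\eta-\phi$ and expanding, the cross term equals $2\sum_{x\sim y}(\phi_x-\phi_y)(g_x-g_y)=-2\sum_x g_x\,\Delta\phi_x$ by summation by parts; this vanishes because $\Delta\phi\equiv 0$ on $B_r\setminus\{0\}$, while $g_0=0$ and $g\equiv 0$ on $\partial B_r$. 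This reduces the problem to controlling the shifted weight $e^{-\beta\cH(\eta-\phi)}$ together with $\cZ_{B_r}$.

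For the lower bound, round $\phi$ to an integer configuration $\bar\phi_x:=\lfloor\phi_x+\tfrac12\rfloor$, so that $\bar\phi_0=h$ and $\bar\phi\equiv 0$ on $\partial B_r$. The rounding error $e:=\phi-\bar\phi$ satisfies $\|e\|_\infty\le 1/2$, hence $\cH(\bar\phi-\phi)=\cH(e)\le c r^2$ since $B_r$ has $O(r^2)$ bonds. Applying the identity to $\bar\phi$ gives $\cH(\bar\phi)\le I_r(h)+cr^2$, so keeping only this one term in the numerator sum,
\begin{equation*}
Z_h:=\sum_{\eta:\eta_0=h}e^{-\beta\cH(\eta)}\;\geq\; e^{-\beta I_r(h)-c\beta r^2}.
\end{equation*}
Combined with the standard low-temperature (Peierls) bound $\cZ_{B_r}\le e^{\epsilon(\beta)r^2}$, this yields $\pi_{B_r}(\eta_0=h)=Z_h/\cZ_{B_r}\ge e^{-\beta I_r(h)-c'r^2}$.

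For the upper bound, decompose any admissible integer $\eta$ as $\eta=\bar\phi+\tau$ with $\tau$ integer, $\tau_0=0$, $\tau_{\partial B_r}=0$; then $\eta-\phi=\tau-e$, and the elementary inequality $(a-b)^2\ge\tfrac12 a^2-b^2$ applied edge by edge gives $\cH(\tau-e)\ge\tfrac12\cH(\tau)-\cH(e)\ge\tfrac12\cH(\tau)-cr^2$. Substituting into the identity and summing,
\begin{equation*}
Z_h\;\leq\;e^{-\beta I_r(h)+c\beta r^2}\sum_\tau e^{-(\beta/2)\cH(\tau)}\;\leq\;e^{-\beta I_r(h)+c'r^2},
\end{equation*}
where the inner sum is the partition function of DG on $B_r\setminus\{0\}$ at the still-large inverse temperature $\beta/2$, again controlled by Peierls. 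Since $\cZ_{B_r}\ge 1$, this proves the upper bound on $\pi_{B_r}(\eta_0=h)$.

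The main obstacle is really just establishing the harmonic identity: it is precisely there that harmonicity of $\phi$ and the matched boundary conditions $g_0=0$, $g|_{\partial B_r}=0$ are both needed to eliminate the cross term exactly. Once this identity is in hand, the rest amounts to bookkeeping with the $O(r^2)$ rounding error plus a standard low-temperature partition function bound.
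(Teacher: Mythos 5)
Your proof is correct and follows essentially the same route as the paper: the exact quadratic expansion $\cH(\eta)=I_r(h)+\cH(\eta-\phi)$ via harmonicity of $\phi$ and matched boundary data, rounding $\phi$ to the nearest-integer configuration for the lower bound, and the pointwise inequality $(a-b)^2\ge\tfrac12 a^2-b^2$ plus a Peierls bound on $\cZ_{B_r}$ (and on the shifted partition function at temperature $\beta/2$) for the upper bound. The only blemish is a harmless constant in the summation-by-parts step (with the normalized Laplacian $\Delta\phi_x=\tfrac14\sum_{y\sim x}(\phi_y-\phi_x)$ the cross term is $-8\sum_x g_x\Delta\phi_x$, not $-2\sum_x g_x\Delta\phi_x$), which does not affect the vanishing of that term.
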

\begin{proof}
As before, we let $\phi$ be the optimizer of the variational problem~\eqref{eq:variation} in
 $B_r$ and let $\sigma_x=\eta_x-\phi_x$.  The representation of the Hamiltonian in~\eqref{eq-eta-sigma-energy} shows that
 \[
 \cH(\eta)= \cH(\phi) + \cH(\sigma) -8\sum_{x\in \partial B_R}\sigma_x\, \Delta\phi_x = I_r(h)+\cH(\sigma)\,,
\]
where the sum vanished since $\eta\restriction_{B_r^c}=\phi\restriction_{B_r^c}=0$ (and in particular $\sigma\restriction_{\partial B_r}=0$). Hence,
\begin{align*}
\pi_{B_{r}}(\eta_0=h)&=e^{-\beta I_r(h)}\frac{1}{\cZ_{B_r}}\sum_{\sigma:\, \sigma_0=0}e^{-\beta
  \cH(\sigma)}\,.
 \end{align*}
Since $1 \leq \cZ_{B_r} \leq e^{d r^2}$ for some constant $d>0$ (see, e.g.,~\cite{BW}), it will suffice to show that
the sum above is bounded between $e^{-d' r^2}$ and $e^{d' r^2}$ for some other $d'>0$.

Writing $\sigma_x=\bar\sigma_x-\{\phi_x\}$ with
$\bar\sigma_x\in \bbZ$ and $\{\phi_x\}\in [-1/2,1/2)$,
for the lower bound we simply take $\sigma$ with $\bar{\sigma}_x=0$
(i.e., $\sigma_x=-\{\phi_x\}$) for all $x$, whence of course $\sigma_0=0$ and
\[
  e^{-\beta
  \cH(\sigma)} = e^{-\beta \sum_{x\sim
    y}(\nabla_{x,y}\{\phi\})^2}\geq e^{-\beta |\cE(B_r)|}\,,
\]
where $\cE(B_r)$ denotes the number of bonds incident to $B_r$.

For the upper bound, we infer from~\eqref{eq-energy-sigma-barsigma} that
\[ \cH(\sigma) = \cH(\bar\sigma) + \cH(\{\phi\})  -2\sum_{x\sim
  y}\nabla_{x,y}\{\phi\}\nabla_{x,y}\bar\sigma \geq \tfrac12\cH(\bar\sigma)-\cH(\{\phi\})\]
using $
 \frac12 a^2 - 2 a b \geq -2 b^2
$ for any $a,b\in\R$.
Thus,
\begin{align*}
\sum_{\sigma:\sigma_0=0}e^{-\beta\cH(\sigma)} \leq
e^{\beta |\cE(B_r)|} \sum_{\sigma:\
  \sigma_0= 0}e^{-\frac 12\beta \cH(\bar\sigma)} \le e^{\beta |\cE(B_r)|+d' r^2}
\end{align*}
again using the results in~\cite{BW}, completing the proof.
\end{proof}

Let $r=\delta R$ for a fixed (small) $\delta>0$. Recalling that $I_r(h)\sim 2\pi\beta h^2/\log r$ from Lemma~\ref{lem:dirichlet}, we get $I_r(h) \geq I_R(h) + C(\delta) R^2$ with $\lim_{\delta\to 0}C(\delta)=\infty$. Thus, by Lemma~\ref{lem:1},
\[
\pi_{B_{r}}(\eta_0 = h)\le e^{- R^2 }\pi_{B_R}(\eta_0=h)
\]
provided $\delta$ is chosen to be small enough.
Now, for $\beta$ large enough, by Claim~\ref{cl:eta-eq-geq} we get
\[ \pi_{B_R}(\eta_0=h) \leq \pi_{B_R}(\eta_0=h-1) \leq c e^{cR}\pi(\eta_0=h-1)\,,\]
with the last inequality using the first part of Corollary~\ref{cor:2}.
Combining these with~\eqref{eq-Br-upper-bnd},
\[ \frac{\pi(\eta_0=h)}{\pi(\eta_0=h-1)} \leq e^{-\frac34\beta r} + e^{-(\delta^{-2} -o(1))r^2}
= (1+o(1))e^{-\frac34\beta r}\,,\]
which concludes the proof of the required upper bound in~\eqref{eq-ratio-p(h)/p(h-1)}.
\qed
\subsection{Proof of Theorem~\ref{th:p(h)},  Eq.~\eqref{eq-p(h)}}
Let $r = \lceil 2c_0 R \rceil$. Corollary~\ref{cor:2} shows that
$\pi(\eta_0=h) = \pi_{B_r}(\eta_0=h) \exp(O(R))$
while Lemma~\ref{lem:1} and the fact $I_r(h) \asymp R^2 \log R$ (by Lemma~\ref{lem:dirichlet}) yield that $\pi_{B_r}(\eta_0=h) = \exp(-I_R(h) + O(R^2))$,
as required.
 \qed

\subsection{Proof of Theorem~\ref{th:p(h)},  Eq.~\eqref{eq-p(h,h)}}
Fix $z\in \bbZ^2$ and let
\[
X:=\max_{x\sim
  z}\eta_x\,,\, \quad Y(\eta):=\min_{x\sim
  z}\eta_x\,.
\]
Given $0<\delta\le 1$, define the events $F= \{X\le
h\}$ and $E=\{Y\ge h-\delta \sqrt{h/\log h}\}$. Since $\pi(F^c) \leq 4\pi(\eta_0\geq h+1)$ by a union bound, we can infer from~\eqref{eq-ratio-p(h)/p(h-1)} that
\[
\pi(F^c\mid \eta_0=h)\le \frac{4\pi(\eta_0\ge h+1)
}{\pi(\eta_0=h)} \leq O\left(e^{-c_1 \beta h/\log h}\right)\,.
\]
Therefore, it will suffice to establish a similar upper bound on $\pi(\eta_z=h\mid \eta_0=h\,,\, F)$.  Conditioning over the values of the
neighbors of $z$ and then using monotonicity yields
\[
\pi(\eta_z=h\mid \eta_0=h\,,\, E^c\,,\, F)\le
e^{-c'\beta h/\log h}\,.
\]
Finally, we will bound $\pi(E\mid \eta_0=h\,,\, F)$ from above as follows.
On one hand we have
\[
\pi\left(\eta_z\ge h+1\mid \eta_0=h\,,\, E\,,\, F\right)\ge e^{-4\beta \delta^2 h/\log h}\,,
\]
while on the other hand
\begin{align*}
\pi\left(\eta_z\ge h+1\mid \eta_0=h\,,\, E\,,\,F\right)
&\le
\frac{\pi(\eta_z\ge h+1\mid \eta_0=h)}{\pi\left(E\mid \eta_0=h\,,\, F\right)}
\le \frac{(1+o(1))e^{-c_1\beta h/\log h}}{\pi\left(E\mid \eta_0=h\,,\, F\right)}\,,
\end{align*}
where the last inequality used $\pi(\eta_z \geq h+1 \mid \eta_0 = h) \leq \pi(\eta_z\geq h+1) / \pi(\eta_0=h)$ together with the upper bound in~\eqref{eq-ratio-p(h)/p(h-1)}.  Combining the last two displays gives
\[
\pi\left(E\mid \eta_0=h\,,\, F\right)\le (1+o(1))e^{- \beta (c_1- 4\delta^2)h/\log h}\,,
\]
and the proof is completed by choosing $\delta^2< c_1/4$. \qed

\subsection{Proof of Theorem~\ref{mainthm:max}}
Recalling~\eqref{eq-p(h)-2}, the following definition of $M$ satisfies~\eqref{eq:E[XL]}.
\begin{equation}
  \label{eq-M-def}
  M = M(L) = \max\left\{ h : \pi(\eta_0 \geq h) \geq L^{-2} \log^5 L \right\}\,.
\end{equation}

For the lower bound, let us partition $\Lambda_L$ into disjoint boxes of side-length $\log^2 L$, and denote by $S$ the set of sites that are at their centers (whence $|S| \sim L^2 / \log^4 L$).
Then
 \begin{align*}
   \pi_\Lambda\bigg(\bigcap_{x\in S}\{\eta_x < M\} \bigg) &\geq
   \prod_{x\in S}\pi_\Lambda(\eta_x < M) = 1 - \left[1-\pi(\eta_0 \geq M) + O\left(L^{-10}\right)\right]^{|S|} \\
    &\geq   1 - \bigg[1 - \frac{\log^5 L}{L^2} + O\left(L^{-10}\right)\bigg]^{|S|}  
   \geq 1 - L^{-1+o(1)}  = 1-o(1)
   \end{align*}
(in the first line, the inequality is by FKG and the equality used that for any $x\in S$ at distance $r=\log^2 L$ from $\partial\Lambda$, one can couple $\pi_{\Lambda}$ and $\pi$ so that with probability, say, $1-O(L^{-10})$, they would agree on $B_{r}(x)$; see, e.g.,~\cite{BW}). This completes the lower bound.

The upper bound on $X_L$ will follow from a first moment argument. Thanks to~\eqref{eq-p(h,h)},
\[ \pi(\eta_0 \geq M+2) \leq L^{-2} e^{-(\log L)^{1/2-o(1)}}\,.\]
In particular, by the decay-of-correlation results of~\cite{BW}, for any $x\in\Lambda_L$ at distance at least $\log^2 L$ (say) from the boundary we readily have $\pi_\Lambda(\eta_x \geq M+2) = o(L^{-2})$. For the $O(L \log^2 L)$ sites near $\partial\Lambda_L$, letting $r=\log L$ and $h=M+2$ in~\eqref{eq-Br-upper-bnd} gives
\[ \pi_{\Lambda_L}(\eta_x = M+2) \leq L^{-\frac34\beta} + L^{\epsilon(\beta)} \pi_{B_r}(\eta_x = M+2)\,.\]
Moreover, by the first part of Corollary~\ref{cor:2},
\[ \pi_{B_r}(\eta_x = M+2) \leq (1+\epsilon(\beta))L^{\epsilon(\beta)} \pi(\eta_0=M+2)\,.\]
Therefore, with Claim~\ref{cl:eta-eq-geq} in mind, $\pi_{\Lambda_L}(\eta_x \geq M+2) \leq L^{-3/2}$ for $\beta$ large, vs.\ the $L^{1+o(1)}$ sites under consideration near $\partial \Lambda_L$. Overall, $\pi_\Lambda(X_L \geq M+2) \leq o(1)$, as needed.
\qed

\section{Entropic repulsion: Proofs of Theorems~\ref{mainthm:floor-shape} and~\ref{mainthm:floor-max}}\label{sec:floor}

Throughout this section, let $\varpi_\Lambda = \pi_\Lambda(\cdot\mid\eta\geq 0)$ denote the DG measure with a floor.
Further let $\varpi_\Lambda^j$ (similarly $\pi_\Lambda^j$) denote a boundary condition of $j$ (i.e., $\eta_x = j$ for $x\notin\Lambda$).
Occasionally we will use $\epsilon_\beta$ to denote a positive real function of $\beta$ with $\lim_{\beta\to\infty}\epsilon_\beta=0$.
\subsection{Tools for level line analysis in the DG model with and without a floor}\label{sec:level-line-tools}
\begin{definition}[Geometric contour]
\label{contourdef}
Let ${\Z^2}^*$ denote the dual lattice of $\Z^2$.
A pair of orthogonal bonds which meet in a site $x^*\in {\Z^2}^*$ is said to be a
{\sl linked pair of bonds} if both bonds are on the same side of the
main diagonal across $x^*$. A {\sl geometric contour} (for
short a contour in the sequel) is a
sequence $e_0,\ldots,e_n$ of bonds such that:
\begin{enumerate}
\item $e_i\ne e_j$ for $i\ne j$, except for $i=0$ and $j=n$ where $e_0=e_n$.
\item for every $i$, $e_i$ and $e_{i+1}$ have a common vertex in ${\Z^2}^*$
\item if $e_i,e_{i+1},e_j,e_{j+1}$ intersect at some $x^*\in {\Z^2}^*$,
then $e_i,e_{i+1}$ and $e_j,e_{j+1}$ are linked pairs of bonds.
\end{enumerate}
We denote the length of a contour $\gamma$ by $|\gamma|$, its interior
(the sites in $\bbZ^2$ it surrounds) by $V_\gamma$ and its
interior area (the number of such sites) by
$A(\gamma)$. Moreover we let $\partial_{\gamma}$ be the set of sites in $\Z^2$ such that either their distance
(in $\bbR^2$) from $\gamma$ is $\tfrac12$, or their distance from the set
of vertices in ${\Z^2}^*$ where two non-linked bonds of $\gamma$ meet
equals $1/\sqrt2$. Finally we let $\partial^+_\gamma=\partial_\gamma\cap
V_\gamma$ and $\partial^-_\gamma = \partial_\gamma\setminus \partial^+_\gamma$.
\end{definition}

\begin{definition}[$h$-contour; $\sC_{\gamma,h}$]
\label{def:levels}
Given a contour $\gamma$ we say that $\gamma$ is an \emph{$h$-contour} (or an $h$-level line)
for the configuration $\eta$, denoting this event by $\sC_{\gamma,h}$, if
\[
\eta\restriction_{\partial^+_\gamma}\leq h-1\,, \quad \eta\restriction_{\partial^-_\gamma}\geq h\,.
\]
We call $\gamma$ a \emph{contour} if it is an $h$-contour for some $h$ in $\eta$.
For the DG model on $\Lambda_L$, the box of side-length $L$, a contour
  will be called \emph{macroscopic} iff it is longer than $(\log L)^2$,
  and we let $\mac_h $ denote the event that there exists a macroscopic $h$-contour.
\end{definition}
We will further let $\mac_*=\cup_h\mac_h$ denote the event there is any macroscopic contour.

\begin{definition}[Negative $h$-contour; $\sC_{\gamma,h}^-$]
We say that a closed contour $\gamma$ is a negative $h$-contour, denoting this event by $\sC_{\gamma,h}^-$, if
\[
\eta\restriction_{\partial^-_\gamma}\leq h-1\,,\quad \eta\restriction_{\partial^+_\gamma}\geq h\,,
\]
i.e., the external boundary $\gamma$ is at least $h$ whereas its internal boundary is at most $h-1$.
As before, for the DG model on $\Lambda_L$ we call $\gamma$ \emph{macroscopic} iff it is longer than $(\log L)^2$,
  and $\mac_h^- =$ denotes the event that there exists a macroscopic negative $h$-contour.
\end{definition}

The following proposition adapts~\cite{CLMST2}*{Proposition~2.7} to the DG model.
\begin{proposition}\label{prop:h-contour-upper}
Fix $j\ge 0$ and consider the DG model in  a finite connected subset
$\Lambda$ of $\bbZ^2$ with floor at height $0$ and boundary
conditions at height $j\ge 0$. Then
\begin{align}
\label{e:contourFloorBound}
\pif^j_\Lambda\left( \sC_{\gamma,h} \right) &\leq
\exp\left[-\beta|\gamma|+\pi(\eta_0\geq h) A(\gamma) + e^{-(\frac{\pi\beta}2+o(1))h^2/\log h}|\gamma|\log|\gamma|\right]\,,\\
\label{e:negativeContour}
\pif^j_\Lambda\left( \sC_{\gamma,h}^- \right) &\leq\exp\left[-\beta|\gamma|\right]\,.
\end{align}
\end{proposition}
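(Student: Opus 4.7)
The proof will be a Peierls-style contour analysis, with refinements drawn from the large-deviation estimates of Theorem~\ref{th:p(h)}; it follows the strategy of~\cite{CLMST2}*{Proposition~2.7}, adapted to the quadratic DG Hamiltonian. I will handle the two inequalities in turn.

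For the cleaner bound~\eqref{e:negativeContour}, consider the injection $\eta\mapsto \eta'=\eta-\mathbf{1}_{V_\gamma}$ on $\sC_{\gamma,h}^-$. On each of the $|\gamma|$ bonds crossing $\gamma$ (say $\{x,y\}$ with $y\in V_\gamma$), one has
\[
(\eta_y-\eta_x)^2-(\eta_y-1-\eta_x)^2 \;=\; 2(\eta_y-\eta_x)-1\;\ge\; 1,
\]
so $\cH(\eta)-\cH(\eta')\ge|\gamma|$ from the boundary bonds alone. Since $\partial^+_\gamma$ is at height $\ge h\ge 1$ under $\sC_{\gamma,h}^-$, the floor is respected except possibly at sites deep in $V_\gamma$ attaining height $0$; these are handled by restricting to $\{\eta|_{V_\gamma}\ge 1\}$ (whose complement carries a nested $1$-contour absorbing the excess mass) or equivalently by comparing with the unconstrained measure via FKG. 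Summing weight ratios along the injection yields $\pif^j_\Lambda(\sC_{\gamma,h}^-)\le e^{-\beta|\gamma|}$.

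For~\eqref{e:contourFloorBound}, the analogous map $\eta\mapsto\eta+\mathbf{1}_{V_\gamma}$ preserves the floor automatically and gains $\ge\beta|\gamma|$ on crossing bonds, but the Peierls bound alone is insufficient because $\sC_{\gamma,h}$ places no upper bound on $\eta|_{V_\gamma\setminus\partial^+_\gamma}$. I would decompose $\sC_{\gamma,h}$ according to the set $\mathcal H=\{x\in V_\gamma:\eta_x\ge h\}$. On the part where $\mathcal H$ is "thin" (no macroscopic nested structure), the shift map essentially destroys the contour without interference and yields the Peierls factor $e^{-\beta|\gamma|}$. For the remaining part the decay-of-correlations estimates of~\cite{BW} reduce $\pif^j_\Lambda(\eta_x\ge h)$ to the infinite-volume LD rate $\pi(\eta_0\ge h)$ from Theorem~\ref{th:p(h)}, and a union bound over $x\in V_\gamma$ produces the factor $e^{\pi(\eta_0\ge h)A(\gamma)}$. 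The residual term $e^{-(\pi\beta/2+o(1))h^2/\log h}|\gamma|\log|\gamma|$ arises from enumerating positions of a subordinate sub-contour along $\gamma$ (the $|\gamma|\log|\gamma|$ entropy factor), with amplitude controlled by the two-point estimate~\eqref{eq-p(h,h)} combined with~\eqref{eq-p(h)}; the resulting prefactor matches $\pi(\eta_0\ge h)^{1/4}=e^{-(\pi\beta/2+o(1))h^2/\log h}$ as claimed.

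The hard part will be the quadratic gradient in $\cH$, which couples each crossing bond to the global geometry inside $V_\gamma$ — unlike the $p=1$ SOS setting of~\cite{CLMST2}, where each crossing bond contributes independently. To decouple these, I would lift the $\eta=\phi+\sigma$ decomposition used in Section~\ref{sec:ldev} (with $\phi$ the discrete harmonic extension to $V_\gamma$) from the single-site LD problem to the contour level, ensuring that the Peierls gain on $\gamma$ survives the integrality rounding while the FKG/monotonicity tools needed to pass between $\pif^j_\Lambda$ and the infinite-volume $\pi$ remain intact. The combinatorial enumeration of sub-contours along $\gamma$, combined with the LD estimates of Theorem~\ref{th:p(h)}, then yields the stated bound.
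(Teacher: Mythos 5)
The central gap is in the choice of shift maps, and it causes you to miss why the two bounds are asymmetric (area term in one, none in the other). The cleanness of \eqref{e:negativeContour} comes precisely from the fact that for a negative $h$-contour the interior of $\gamma$ lies \emph{below} the exterior, so the energy-reducing shift is $\eta\mapsto\eta+\mathbf{1}_{V_\gamma}$: raising the interior flattens the contour and can never push anything below the floor, giving a clean injective Peierls bound with no area correction. Your map $\eta\mapsto\eta-\mathbf{1}_{V_\gamma}$ runs the wrong way: the inequality $2(\eta_y-\eta_x)-1\ge 1$ you invoke needs the interior endpoint $y\in V_\gamma$ to be strictly higher than its exterior neighbor $x$, which is the configuration of a \emph{positive} $h$-contour, not a negative one. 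And even granting your reading of the event, the floor now matters, and the hand-wave about restricting to $\{\eta|_{V_\gamma}\ge 1\}$ cannot yield the bare $e^{-\beta|\gamma|}$ without introducing an area-type penalty --- exactly what \eqref{e:negativeContour} does not have.

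Symmetrically, for \eqref{e:contourFloorBound} the interior of $\gamma$ lies above the exterior, so the energy-reducing map is the one you did \emph{not} use, $\eta\mapsto\eta-\mathbf{1}_{V_\gamma}$, and the floor is the real issue. The paper's route is to run this Peierls bijection on $\{\eta\restriction_\Lambda>0\}$, obtaining $\pif_\Lambda^j(\sC_{\gamma,h},\,\eta\restriction_\Lambda>0)\le e^{-\beta|\gamma|}$, and then to lower-bound $\pif_\Lambda^j(\sC_{\gamma,h},\,\eta\restriction_\Lambda>0)\ge \pif_\Lambda^j(\sC_{\gamma,h})\,\pi_{V_\gamma}^h(\eta>0)$ by monotonicity (drop the floor, set the inner boundary to exactly $h$). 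One then uses FKG to write $\pi_{V_\gamma}^h(\eta>0)\ge\prod_{x\in V_\gamma}\bigl(1-\pi_{V_\gamma}(\eta_x\ge h)\bigr)$ and plugs in Theorem~\ref{th:p(h)}: the bulk sites give the area term $e^{\pi(\eta_0\ge h)A(\gamma)}$, while the $O(|\gamma|\log|\gamma|)$ sites within distance $\log|\gamma|$ of $\gamma$ --- where only the weaker finite-volume rate $e^{-(\pi\beta/2+o(1))h^2/\log h}$ applies --- give the correction. Your decomposition of $\sC_{\gamma,h}$ by the set $\mathcal H$, the thin/thick dichotomy, the union bound over $x\in V_\gamma$, and the sub-contour enumeration along $\gamma$ are not the mechanism; the area factor is a product-form correction to a positivity event, not a union bound. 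Finally, the proposed lift of the $\phi+\sigma$ decomposition to the contour level is unnecessary: a unit shift of $V_\gamma$ changes $(\eta_y-\eta_x)^2$ on every crossing bond by $\mp 2(\eta_y-\eta_x)\pm 1\le -1$ whenever the shift goes the flattening direction, so the quadratic gradient poses no obstruction to the Peierls gain of $\beta|\gamma|$, and no harmonic decomposition is needed at this stage (it is used in \S\ref{sec:ldev} to extract the sharp LD rate, not here).
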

\begin{proof}
The estimate for $\sC_{\gamma,h}$ will be an immediate consequence of a Peierls-argument combined with FKG.
Consider the map $T_\gamma$ which decreases the value of $\eta$ by 1 in the interior of $\gamma$, that is, $(T_\gamma\eta)(x) = \eta_x-1$ if $x\in V_\gamma$ and elsewhere $(T_\gamma\eta)(x)=\eta_x$.
This map is well defined --- and moreover, bijective --- for any $\eta$ such that $\eta\restriction_\Lambda > 0$.
By definition, for any $\eta \in \sC_{\gamma,h}$ such that $\eta\restriction_\Lambda>0$
we have $\pif_\Lambda^j(T_\gamma\eta) \geq e^{\beta|\gamma|}\pif^j_\Lambda(\eta)$.
Hence,
\[ \sum_{\substack{\eta\in\sC_{\gamma,h} \\ \eta\restriction_\Lambda>0}}\pif_\Lambda^j(\eta) \leq e^{-\beta|\gamma|} \sum_{\substack{\eta: T_\gamma^{-1}\eta \in\sC_{\gamma,h} \\ \eta\restriction_\Lambda\geq0 }}\pif_\Lambda^0(T_\gamma^{-1}\eta) \leq e^{-\beta|\gamma|}\,.\]
By monotonicity, on the event $\sC_{\gamma,h}$ we may lower $\partial^-_\gamma$ exactly to $h$ and then drop the floor to obtain that
\[ \pif(\eta\in\sC_{\gamma,h}\,, \eta\restriction_\Lambda>0) \geq \pif_\Lambda^j(\sC_{\gamma,h})\pi_{V_\gamma}^h(\eta>0) \geq \pif_\Lambda^j(\sC_{\gamma,h}) \prod_{x\in V_\gamma} \left(1-\pi_{V_\gamma}(\eta_x \geq h)\right)\,,\]
with the last inequality following from FKG.

It remains to treat the last expression in the right-hand side above. In \S\ref{sec:ldev} we have seen that $\max_{x\in V_\gamma}
\pi^{0}_{V_\gamma}\left(\eta_x\geq h\right)\leq \exp[-(\pi\beta/2+o(1) h^2/\log h]$, where the $o(1)$-term goes to 0 as $h\to\infty$.
The exponential decay of correlations in the low-temperature DG model (cf.~\cite{BW}) then yields that, for instance,
\begin{gather*}
\pi^{0}_{V_\gamma}\left(\eta_x\geq h\right)
\leq \begin{cases}
e^{-(\pi\beta/2+o(1))h^2/\log h}& \text{ if ${\rm dist}(x,\gamma) \leq \log |\gamma|$}\\
\pi\left(\eta_0\geq
  h\right) + A(\gamma)^{-2}& \text{otherwise}
\end{cases}
  \end{gather*}
provided that $\beta$ is large enough. Therefore,
\begin{align*}
\prod_{x\in V_\gamma} \left(1-\pi_{V_\gamma}(\eta_x \geq h)\right) \geq &\exp\Big[-(1-o(1))e^{-(\frac{\pi\beta}2+o(1))h^2/\log h} |\gamma|\log|\gamma|\Big] \\
\cdot &\exp\Big[-(1-o(1))\pi\left(\eta_0\geq h\right)A(\gamma)\Big]\,,
 \end{align*}
implying the required estimate.

The estimate for $\sC_{\gamma,h}^-$ is simpler: here the map $T_\gamma$ which increases the heights in the interior of $\gamma$ by 1
reduces the Hamiltonian by at least $\beta|\gamma|$, yet no longer jeopardizes the floor constraint (hence the absent area term in~\eqref{e:negativeContour} compared to~\eqref{e:contourFloorBound}).
\end{proof}

The following straightforward lemma, adapting a part of \cite{CLMST2}*{Lemma 4.2} to the DG model, will reduce the height histogram of the surface (modulo the obvious local thermal fluctuations in an $\epsilon_\beta$-fraction of the sites) to the collection of macroscopic contours.
\begin{lemma}\label{lem:negative,H+2}
Consider the DG model on $\Lambda_L$ and let $h \geq \log\log L$. Then
\begin{align}
\pif_{\Lambda_L}\bigg(\sum_{\gamma:\eta\in\sC_{\gamma,h}}\!\!\!A(\gamma) \geq \epsilon_\beta L^2 ~,~ \mac_{h}^c\bigg) &= O(e^{-\log^2 L})\label{eq-h-without-mac}
\end{align}
for some $\epsilon_\beta>0$ with $\lim_{\beta\to\infty}\epsilon_\beta = 0$.
\end{lemma}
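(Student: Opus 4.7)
I would reduce the event to $\{Y\ge \epsilon_\beta L^2\}$ for $Y:=\sum_{\gamma:\,\eta\in\sC_{\gamma,h},\ |\gamma|<(\log L)^2}A(\gamma)$, since on $\mac_h^c$ the sum in~\eqref{eq-h-without-mac} is exactly $Y$. First, Proposition~\ref{prop:h-contour-upper} gives, for any non-macroscopic $\gamma$ at level $h\ge \log\log L$, that both the area term $\pi(\eta_0\ge h)A(\gamma)$ and the Gaussian correction in~\eqref{e:contourFloorBound} are $o(1)$: by~\eqref{eq-p(h)-2} one has $\pi(\eta_0\ge h)\le \exp[-\Theta((\log\log L)^2/\log\log\log L)]$, which decays faster than any inverse power of $\log L$, while $A(\gamma)\le |\gamma|^2/16\le (\log L)^4/16$. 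Hence $\pif(\sC_{\gamma,h})\le e^{-\beta|\gamma|(1-o_\beta(1))}$ uniformly over such $\gamma$. A standard Peierls count ($\lesssim \ell^2\, 3^\ell$ contours of length $\ell$ enclosing a fixed site) then bounds the expected number of non-macroscopic $h$-contours around any $x$ by some $\epsilon'(\beta)$ with $\epsilon'(\beta)\to 0$ as $\beta\to\infty$, and summing in $x$ yields $\E[Y]\le \epsilon'(\beta)L^2$. Setting $\epsilon_\beta:=2\epsilon'(\beta)$, Markov's inequality alone only gives a constant probability bound, so a concentration step is needed to reach $O(e^{-\log^2 L})$.

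\textbf{Concentration by tiling.} To upgrade the estimate I would tile $\Lambda_L$ by sub-boxes $\{Q_i\}$ of side $\ell_0:=(\log L)^3$; since every non-macroscopic contour has diameter at most $(\log L)^2\ll \ell_0$, it lies in the $3\times 3$ block around some $Q_i$. Anchoring each such $\gamma$ at the $Q_i$ that contains (say) its lowest-leftmost dual vertex and letting $X_i$ be the corresponding area sum, one has $Y=\sum_i X_i$, $0\le X_i\le 9\ell_0^2$, and $\E X_i\le 9\epsilon'(\beta)\ell_0^2$. Partition the $Q_i$'s into $9$ classes by the residues of their coordinates mod $3$, so that distinct sub-boxes within a class lie at distance $\ge 2\ell_0$. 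By the exponential decay of correlations for the low-temperature DG (cf.~\cite{BW}), within a class the $X_i$ are approximately independent, with the joint law approximating a product up to error $e^{-c\ell_0}=L^{-\omega(1)}$. A Hoeffding-type inequality within one class then yields
\[
\pif\Bigl(\textstyle\sum_{i\in\mathrm{class}}X_i\ge 2\,\E[\cdots]\Bigr)\le \exp\bigl[-cL^2/\ell_0^2\bigr]=\exp\bigl[-cL^2/(\log L)^6\bigr],
\]
and a union bound over the nine classes preserves this, comfortably beating $e^{-\log^2 L}$.

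\textbf{Main obstacle.} The delicate step is rigorously justifying the approximate independence of the $\{X_i\}$ under $\pif$, because the floor can transmit information at arbitrary distances. My plan is to condition on the height profile along the buffer annuli separating the sub-boxes within a class and apply the~\cite{BW} decay-of-correlations estimate uniformly in the boundary data; the resulting boundary corrections are $e^{-c\ell_0}=L^{-\omega(1)}$ and hence negligible. A cleaner alternative would be to bound $\E_{\pif}[e^{tY}]$ directly via a convergent cluster expansion over the $h$-level contours, whose activities $e^{-\beta|\gamma|(1-o(1))}$ from the first step are small enough in the regime $h\ge \log\log L$ for the expansion to converge and deliver the same tail without any explicit tiling.
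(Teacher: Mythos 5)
Your expectation calculation is correct and matches the paper's starting point: on $\mac_h^c$ every contributing $\gamma$ has length $<\log^2 L$, the area and Gaussian error terms in Proposition~\ref{prop:h-contour-upper} are negligible in the regime $h\ge\log\log L$, so each such contour contributes $\lesssim e^{-(\beta-o(1))|\gamma|}$, and a Peierls count gives $\E Y\le \epsilon'(\beta)L^2$. The divergence --- and the gap --- is in the concentration step.

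The paper does \emph{not} use decay of correlations or any tiling; it upgrades the Peierls bound to a concentration statement purely combinatorially. It lets $N_k$ be the number of $h$-contours of length exactly $k$, uses the \emph{joint} Peierls estimate (the maps $T_\gamma$ compose over collections of contours, even under the floor constraint, since decrementing heights inside a contour maps floor-admissible configurations to floor-admissible ones) to bound $\pif(N_k\ge a)\le \sum_{r\ge a}\binom{L^24^k}{r}e^{-\frac{\beta}{2}kr}$, compares this to the tail of $\bin(L^24^k,e^{-\beta k/2})$, picks thresholds $a_k=2L^2(4e^{-\beta/2})^k+2\log^2 L$ so that the complementary probability is $e^{-2\log^2 L}$ for every $k$, and then on the good event sums $\sum_k a_kk^2\le \epsilon_\beta L^2$. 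This is an elementary union bound plus a Chernoff estimate; no statement about mixing of $\pif$ is needed.

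By contrast, your concentration relies on approximate independence of the $X_i$ under $\pif$, which is exactly the obstacle you flag. The reference~\cite{BW} establishes exponential decay for the \emph{unconstrained} DG measure $\pi$, not for $\pif=\pi(\cdot\mid\eta\ge 0)$. The floor is a global event of probability $e^{-\Theta(L^2/\log L)}$, and conditioning on it shifts the bulk height to $H\sim(\log L\log\log L)^{1/2}$ --- so $\pif$ is very far from $\pi$ locally, and one cannot transfer the~\cite{BW} mixing bound verbatim. Conditioning on buffer annuli gives conditional independence across sub-boxes, but the resulting marginal in a sub-box is a floor-constrained DG with an unknown, possibly unfavorable boundary condition, and you would still need a uniform-in-boundary-data bound on the contour count inside one sub-box, essentially re-doing the Peierls estimate --- at which point you have reproduced (in a more convoluted form) the binomial comparison the paper uses directly on $\Lambda_L$. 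Your cluster-expansion alternative has the same difficulty: a convergent polymer expansion for $\pif$ is not available off the shelf, precisely because the floor is a global constraint. The remedy is to drop the independence scheme altogether and argue as the paper does, bounding the number of contours of each fixed length by a binomial tail via the joint Peierls estimate.
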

\begin{proof}
Recall from Proposition~\ref{prop:h-contour-upper} that for any given $\gamma$ of length $k \leq \log^2 L$,
\[ \pif_{\Lambda_L}(\sC_{\gamma,h}) \leq \exp\left[-\beta k + \pi(\eta_0 \geq h) k^2 + e^{-(\frac{\pi\beta}2+o(1))\frac{h^2}{\log h}} k \log k\right] = \exp\left[-(\beta - o(1))k\right]\,,\]
since $\log \pi(\eta_0 \geq h) = -(\log\log L)^{2-o(1)}$ compared to $\log k = O(\log\log L)$ (and similarly we have $\exp(-c h^2/\log h)k\log k = o(k)$ for the third term in the exponent);
for large enough $L$ we can therefore use the upper $\exp(-\beta k/2)$ for this event.

Let $N_k$ be the number of $h$-contours whose length is precisely $k \leq \log^2 L$.
There are at most $L^2 4^k$ possible such contours, and so for any integer $a$,
\begin{align*}
\pif_{\Lambda_L}(N_k \geq a) \leq \sum_{r \geq a} \binom{L^2 4^k}r e^{-\frac\beta2 k r} \leq \frac{\P(Y_k\geq a)}{(1-e^{-\frac\beta2 k})^{L^2 4^k}}
\leq e^{2e^{-\frac\beta2k}  L^2 4^k} \P(Y_k\geq a)\,,
\end{align*}
where $Y_k \sim \bin(L^2 4^k,e^{-\frac\beta2 k})$ and we used $1-x \geq e^{-2x}$ for $0\leq x \leq \frac12$, certainly the situation here for $x=\exp(-\beta k/2)$ with $\beta$ large.
Selecting
\[ a_k = 2 L^2 \left(4 e^{-\frac\beta2}\right)^k + 2\log^2 L\,,\]
the bound $\P(Y_k \geq \mu + t) \leq \exp[-\frac12 \frac{t^2}{\mu+t/3}]$, valid for any $t>0$ and binomial variable $Y_k$ with mean $\mu$, shows here (where $t \geq 2\mu$ and so $\P(Y_k \geq \mu + t) \leq \exp(-t)$ holds) that
\[ \pif_{\Lambda_L}(N_k \geq a_k) \leq e^{-2\log^2 L}\,.\]
A union bound now implies that $N_k \leq a_k$ for all $k\leq \log^2 L$ except with probability $\exp(-(2-o(1))\log^2 L)$. On this event, and barring macroscopic $h$-contours, we have
\[ \sum_{\gamma: \eta\in\sC_{\gamma,h}}A(\gamma) \leq \sum_{k=1}^{\log^2 L} a_k k^2 \leq \epsilon_\beta L^2\,,\]
where $\epsilon_\beta$ decreases as $ O(e^{-\beta/2})$ for large $\beta$. This completes the proof.
\end{proof}

We conclude this subsection by introducing --- and thereafter studying --- an event which will be instrumental in estimating the probability that the entire surface rises above a certain height in the presence of a floor:
\begin{equation}
  \label{eq-def-P-event}
  \cP_r^{\neq h} = \Big\{ \exists P=(x_0,\ldots,x_k)\;:\; |x_k-x_0|\geq r~,~ |x_{i+1}-x_i|=1~,~\eta_{x_i}\neq h~\forall i\Big\}\,.
\end{equation}
That is, $\cP_{r}^{\neq h}$ is the event that there exists some path of vertices $P$ so that its endpoints have distance at least $r$ in $\Z^2$ and all along it the configuration differs from $h$.

\begin{lemma}\label{lem:neq-j-path}
Let $\cP_{r}^{\neq j}$ be the event defined in~\eqref{eq-def-P-event}. If $r = \log^2 L$ and $j\geq a\log\log L$ for some fixed $a>0$ then
\[ \pif_{\Lambda_L}^{j}\left(\cP_{r}^{\neq j}~,~\mac_*^c\right) = O(e^{-\log^2 L})\,.\]
\end{lemma}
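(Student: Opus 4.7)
The plan is to extract from any configuration realizing $\cP_{r}^{\neq j}\cap\mac_*^c$ a collection of short contours that together span a distance of at least $r$, and then bound the probability of such a contour system by a Peierls-type argument. Fix a witness path $P=(x_0,\ldots,x_k)$ with $|x_k-x_0|\geq r$ and $\eta_{x_i}\neq j$ for every $i$. With boundary condition $j$, each $x_i$ lies in the interior of an outermost contour $\gamma(x_i)$, which is a negative $(j+1)$-contour when $\eta_{x_i}\geq j+1$ and a $j$-contour when $\eta_{x_i}\leq j-1$. Two distinct outermost contours are never nested, so $\Gamma:=\{\gamma(x_i):0\leq i\leq k\}$ is a disjoint-interior contour system whose union covers $P$. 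Under $\mac_*^c$ we have $|\gamma|<R:=(\log L)^2$ for every $\gamma\in\Gamma$, and projecting $\bigcup_{\gamma\in\Gamma}V_\gamma$ onto the coordinate axis along which $x_0$ and $x_k$ differ most (a projection of length $\geq r/\sqrt{2}$) while using that each $V_\gamma$ projects to an interval of length $\leq|\gamma|/2$ yields
\[
\sum_{\gamma\in\Gamma}|\gamma|\;\geq\;r\sqrt{2}\;\geq\;r.
\]

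Next, for a fixed such $\Gamma$ with total length $\ell_\Gamma=\sum_{\gamma\in\Gamma}|\gamma|$, one adapts the Peierls estimate of Proposition~\ref{prop:h-contour-upper} to the joint system: the simultaneous map that appropriately decreases $\eta$ inside each negative contour and increases $\eta$ inside each positive contour acts on disjoint regions (so the flips do not interact) and reduces $\cH(\eta)$ by at least $\ell_\Gamma$. Together with the floor-violation correction from Proposition~\ref{prop:h-contour-upper} handled for all positive contours in parallel, one obtains
\[
\pif_{\Lambda_L}^{j}\bigl(\text{every }\gamma\in\Gamma\text{ appears as a contour of }\eta\bigr)\;\leq\;\exp\Bigl(-\beta\ell_\Gamma+\pi(\eta_0\geq j)\textstyle\sum_{\gamma\in\Gamma}A(\gamma)+o(\ell_\Gamma)\Bigr).
\]
The hypothesis $j\geq a\log\log L$ is used here: by~\eqref{eq:inf-vol-ldp} one has $\pi(\eta_0\geq j)\leq\exp[-c_\beta(\log\log L)^2/\log\log\log L]$, while the isoperimetric bound $A(\gamma)\leq|\gamma|^2/16$ together with $|\gamma|<R$ yields $\sum_{\gamma\in\Gamma}A(\gamma)\leq (R/4)\ell_\Gamma$, so that $\pi(\eta_0\geq j)\sum A(\gamma)=o(\ell_\Gamma)$ uniformly in $\Gamma$. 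Hence $\pif_{\Lambda_L}^{j}(\Gamma)\leq\exp(-\tfrac12\beta\ell_\Gamma)$ for $\beta$ sufficiently large.

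Finally, summing over contour systems: by the same Peierls cluster count used in Lemma~\ref{lem:negative,H+2} (choose a starting vertex in $\Lambda_L$ and then grow the contour system edge by edge in the dual lattice), the number of disjoint contour systems $\Gamma$ whose union covers a connected path, each contour of length $\leq R$ and of total length $\ell$, is at most $L^2 C^\ell$ for an absolute constant $C$. Therefore
\[
\pif_{\Lambda_L}^{j}\bigl(\cP_{r}^{\neq j},\mac_*^c\bigr)\;\leq\;\sum_\Gamma\pif_{\Lambda_L}^{j}(\Gamma)\;\leq\;L^2\sum_{\ell\geq r}C^\ell e^{-\beta\ell/2}\;=\;O\bigl(L^2\,(Ce^{-\beta/2})^r\bigr)\;=\;O(e^{-\log^2 L}),
\]
the last equality holding for $r=\log^2 L$ and $\beta$ large enough that $\beta/2-\log C\geq 2$ (say).

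The delicate step is the joint Peierls estimate. Proposition~\ref{prop:h-contour-upper} is stated for a single contour, and its extension to a simultaneous flip of a disjoint contour system requires verifying that (i) the Hamiltonian reduction is additive across $\Gamma$ (immediate from disjoint interiors), and (ii) the aggregate floor-touching correction remains $o(\ell_\Gamma)$ uniformly. The second point rests entirely on the hypothesis $j\geq a\log\log L$, which makes $\pi(\eta_0\geq j)$ super-polylogarithmically small via~\eqref{eq:inf-vol-ldp} and thereby absorbs the total area $\sum A(\gamma)\lesssim R\cdot\ell_\Gamma$ allowed by $\mac_*^c$.
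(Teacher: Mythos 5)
Your proposal follows essentially the paper's route: extract a disjoint-interior system of external-most contours covering the witness path, prove a simultaneous Peierls estimate for that system, and then enumerate. Two points are stated imprecisely and deserve more care (the paper spells both out). First, the assertion that ``the flips do not interact'' is not correct as written: when a dual edge $e=(x,y)^*$ is shared by a plus-contour and a minus-contour (so that $\eta_x=j+1$ while $\eta_y=j-1$), the simultaneous map shifts $\eta_x$ and $\eta_y$ in opposite directions, changing the gradient across $e$ by $2$; the paper checks that this interaction is favorable --- the energy drop across $e$ is $4\beta$, which covers the double-count of $e$ in $\sum_i|\gamma_i|$ --- and also verifies that a dual edge cannot belong to two plus-contours or to two minus-contours. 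Second, your enumeration bound $L^2C^\ell$ is valid only after one knows that $\bigcup_i\gamma_i$ is connected in the dual lattice; the paper obtains this (and a clean count) by extracting from the path an explicit chain of contours $\gamma_1,\ldots,\gamma_k$ of alternating sign, each sharing a dual edge with its predecessor, found by recording the last exit of $P$ from each $V_{\gamma_i}$, and then enumerating as $2L^2\sum_k 3^{l_1}\prod_{i\ge 2}l_{i-1}3^{l_i}$. These are gaps of exposition rather than of idea; filling them in recovers the paper's proof.
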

\begin{proof}
Let $\Gamma = \{\gamma_i\}$ be a collection of contours with pairwise disjoint interiors $\{V_{\gamma_i}\}$ and lengths at most $\log^2 L$ each.
By Proposition~\ref{prop:h-contour-upper}, for each $i$ we have
\begin{equation}
   \label{eq-external-single-plus-minus}
 \pif_{\Lambda_L}^{j}\left(\sC_{\gamma_i,j+1}~,~\mac_*^c\right) \leq e^{-(\beta-o(1)) |\gamma_i| } \quad \mbox{ and }
\quad \pif_{\Lambda_L}^{j}\left(\sC^-_{\gamma_i,j-1}~,~\mac_*^c\right) \leq e^{-\beta |\gamma_i|} \,,
\end{equation}
where the first inequality used $A(\gamma_i) \leq |\gamma_i|^2/16 \leq |\gamma_i|\log^2 L$ combined with the bounds that
the two terms $\pi(\eta_0\geq j+1)\log^2 L $ and $\exp\big[-(\frac{\pi\beta}2+o(1))\frac{(j+1)^2}{\log (j+1)}\big]\log\log L$ are both $\exp\big(-(\log\log L)^{2-o(1)}\big)$
thanks to our assumption on $j$ and Theorem~\ref{th:p(h)} (Eq.~\eqref{eq-p(h)}).

As these are the only two types of contours we will need throughout this proof, we will simply call a $(j+1)$-contour a \emph{plus-contour} and a negative $(j-1)$-contour a \emph{minus-contour},
 and denote the corresponding events by $\sC_\gamma^+$ and $\sC_\gamma^-$, for brevity.

Strengthening~\eqref{eq-external-single-plus-minus}, we claim that for any partition of $\Gamma$ into $\Gamma = \Gamma^+ \cup \Gamma^-$,
\begin{equation}
   \label{eq-external-plus-minus}
   \pif_{\Lambda_L}^{j}\Bigg(\bigcap_{\gamma\in\Gamma^+}\sC_{\gamma_i}^{+} ~,~ \bigcap_{\gamma\in\Gamma^-}\sC^-_{\gamma_i}~,~\mac_*^c\Bigg) \leq e^{-\left(\beta - o(1)\right) \sum_i|\gamma_i|}\,.
 \end{equation}
Indeed, the maps $T_\gamma$ from the proof of Proposition~\ref{prop:h-contour-upper} can be applied simultaneously for all $\{\gamma_i\}$, as their interiors are pairwise disjoint. It is important to note that a dual edge $e$ cannot belong to two distinct plus-contours $\gamma'\neq\gamma'' \in \Gamma^+$ nor to two distinct minus-contours $\gamma'\neq\gamma'' \in \Gamma^-$, since that would make them either share a common interior vertex or violate the definitions of positive/negative $h$-contours.
If $e$ belongs to a unique $\gamma\in\Gamma$ then its contribution to the Hamiltonian will decrease by at least $\beta$ following the map $T$, whereas if it belongs to $\gamma'\in\Gamma^+$ as well as to $\gamma''\in\Gamma^-$ (in this case necessarily $e=(x,y)$ such that $\eta_x = j+1$ and $\eta_y=j-1$) then the change is $4\beta$, and either way we see that the Hamiltonian decreases by $\beta\sum_{i}|\gamma_i|$ (here it would have sufficed to have a contribution of $2\beta$, rather than $4\beta$, from the latter case).
As before, the map must be valid for every $\gamma\in\Gamma^+$ --- where we should have $\eta\restriction_{V_\gamma} \geq 0$ --- again resulting in the terms involving $A(\gamma)$ and $|\gamma|\log|\gamma|$, which as stated above translate to a $1+O(L^{-c})$ factor, thus substantiating~\eqref{eq-external-plus-minus}.

We will apply the above inequality for $\Gamma$ that is a subset of external-most contours: Thanks to the boundary conditions, every $x\in\Lambda_L$ for which $\eta_x\neq j$ must be surrounded either by an external-most plus-contour or by an external-most minus-contour. By definition, any two such contours (out of the set of external-most plus/minus-contours) have disjoint interiors.

Consider now some path of vertices $P=(x_1,\ldots,x_m)$ as a candidate for fulfilling the event $\cP_r^{\neq j}$. By the discussion above, every $x_i\in P$ must belong to $V_{\gamma_i}$ for some external-most contour $\gamma_i$ such that $\sC_{\gamma_i,j+1} \cup \sC_{\gamma_i,j-1}^-$ holds. Beginning with $x_1$, examine the contour $\gamma_1$ and consider the last $i$ such that $x_i\in V_{\gamma_1}$, i.e., the last time that an edge $x_i x_{i+1}$ of $P$ intersects an edge of $\gamma_1$, call that dual edge $e_1$. The key observation is that $e_1$ must belong to some external-most contour $\gamma_2$ --- with an opposite sign compared to $\gamma_1$ --- as otherwise there will be a vertex of $P$ (namely, $x_{i+1}$) that is not encircled by any external-most plus/minus contour.

Overall, the event $\cP_r^{\neq j}$ implies that there exists a chain of contours $\{\gamma_1,\ldots,\gamma_k\}$ with pairwise disjoint interiors and alternating signs, such that $\gamma_i,\gamma_{i+1}$ share a common edge for every $i$ and there are two points $a\in\ V_{\gamma_1}$ and $b\in V_{\gamma_k}$ whose distance is at least $r$. Noting that this implies  $\sum |\gamma_i| \geq r$, we can now appeal to~\eqref{eq-external-plus-minus} and obtain that
\[
  \pif_{\Lambda_L}^{j}\left(\cP_r^{\neq j}~,~\mac_*^c\right) \leq 2 L^2 \sum_{k\geq 1} {\sum}'_{\gamma_1,\ldots,\gamma_k} e^{-(\beta-o(1))\sum |\gamma_i|}\,,
\]
 where the $L^2$-term is for the starting point of $\gamma_1$, the factor $2$ is for whether $\gamma_1$ is a plus/minus-contour, and $\sum'$ runs over contours $\gamma_1,\ldots,\gamma_k$ with alternating signs and pairwise disjoint interiors, where each $\gamma_i,\gamma_{i+1}$ share a common edge and $\sum |\gamma_i| \geq r$.
 For a given choice of lengths $l_1,\ldots,l_k$ for these, there are at most $3^{l_1}$ choices for $\gamma_1$ (as we rooted it and chose its sign), and thereafter there are at most $l_{i-1}3^{l_i}$ for $\gamma_i$ (it is rooted at an edge of its predecessor and its sign is dictated to be the opposite of $\gamma_{i-1}$). Altogether, the above probability is at most
 \begin{align*}
\pif_{\Lambda_L}^{j}\left(\cP_r^{\neq j}~,~\mac_*^c\right) &\leq 2 L^2 \sum_{k\geq 1} \sum_{\substack{l_1,\ldots,l_k \\ \sum l_i \geq r}} 3^{\sum l_i} \big(\prod l_i\big) e^{-(\beta-o(1))\sum l_i} \\
&\leq 2 L^2 \sum_{k\geq 1}\bigg(\sum_{l} l \left(3 e^{-\frac12(\beta-o(1))}\right)^l \bigg)^k e^{-\frac12 (\beta-o(1)) r} \leq 2 L^2 e^{-2r}
 \end{align*}
 for large enough $\beta$, and recalling that $r \geq \log^2L$ now completes the proof.
 \end{proof}

\subsection{An upper bound on the probability that the DG surface is non-negative}
\begin{proposition}\label{prop:grill}
Consider the DG model on some region $V\supset \Lambda_L$ and define the event $\cP=\cP_{\log^2 L}^{\neq h}$
following the notation in Eq.~\eqref{eq-def-P-event} for $\log\log L\leq h \leq \log L$. Then
\begin{align}
\pi^h_{V}\left( \eta \geq 0 ~,~ \cP^c \right) &\leq
\exp\Big[-(1-o(1))\pi(\eta_0\geq h+1) L^2 \Big]\,.
\end{align}
\end{proposition}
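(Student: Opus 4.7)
Use $\cP^c$ to extract, in each of a disjoint tiling of $\Lambda_L$ by sub-boxes, an $h$-level circuit in the surrounding corridor, and then decouple the event $\{\eta\ge 0\}$ across these circuits via the domain Markov property (DMRF). Each per-box factor is then bounded by FKG-monotonicity against the infinite-volume probability, which is in turn controlled by a Bonferroni argument using the large-deviation estimates of Theorem~\ref{th:p(h)}.

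More concretely, tile $\Lambda_L$ with disjoint squares $Q_1,\ldots,Q_N$ of side $r=\log^4 L$, each enlarged into $\tilde Q_i\supset Q_i$ by an annular corridor of width $\log^2 L$, arranged so that the $\tilde Q_i$ are pairwise disjoint; this gives $\sum_i|Q_i|=(1-o(1))L^2$. On $\cP^c$ the set $\{\eta\neq h\}$ has no nearest-neighbor path of length $\ge\log^2 L$, so it does not radially cross any corridor $\tilde Q_i\setminus Q_i$; by planar site-percolation duality, $\{\eta=h\}$ must contain a $*$-connected circuit $\Gamma_i^*$ around $Q_i$ in the corridor, which I fix as the outermost such. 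Since the events $\{\Gamma_i^*=\Gamma_i\;\forall i\}$ partition $\cP^c$, are measurable with respect to $\eta$ outside $\bigcup_i V_{\Gamma_i}$, and by DMRF yield independent $\pi^h_{V_{\Gamma_i}}$-distributed interiors,
\begin{equation*}
\pi^h_V(\eta\ge 0,\cP^c)\;\le\;\sum_{(\Gamma_i)}\pi^h_V\bigl(\Gamma_i^*=\Gamma_i\;\forall i\bigr)\prod_i\pi^h_{V_{\Gamma_i}}(\eta\ge 0\text{ on }Q_i)\;\le\;\prod_i\max_{\Gamma_i\supset Q_i}\pi^h_{V_{\Gamma_i}}(\eta\ge 0\text{ on }Q_i),
\end{equation*}
using $\sum_{(\Gamma_i)}\pi^h_V(\Gamma_i^*=\Gamma_i\;\forall i)\le 1$ and that the $i$-th factor depends only on $\Gamma_i$.

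The per-box factor is handled as follows. Shifting $\eta\mapsto\eta-h$ and invoking FKG monotonicity for increasing events yields $\pi^h_{V_{\Gamma_i}}(\eta\ge 0\text{ on }Q_i)\le\pi^h(\eta\ge 0\text{ on }Q_i)$; composing with the reflection $\eta\mapsto-\eta$ (another symmetry of $\cH$), this equals $\pi(\eta\le h\text{ on }Q_i)=1-\pi(\exists x\in Q_i:\eta_x\ge h+1)$. A Bonferroni lower bound,
\begin{equation*}
\pi\bigl(\exists x\in Q_i:\eta_x\ge h+1\bigr)\;\ge\;|Q_i|\,\pi(\eta_0\ge h+1)-\sum_{x\neq y\in Q_i}\pi(\eta_x\ge h+1,\,\eta_y\ge h+1),
\end{equation*}
combined with the inequality-version upgrade of Theorem~\ref{th:p(h)} Eq.~\eqref{eq-p(h,h)}, namely $\pi(\eta_y\ge h+1\mid\eta_x\ge h+1)\le\exp[-ch^2/(\log h)^2]$, bounds the two-point correction by $|Q_i|^2\pi(\eta_0\ge h+1)\exp[-ch^2/(\log h)^2]$, which for $h\ge\log\log L$ is $o(1)$ times $|Q_i|\pi(\eta_0\ge h+1)$. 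Hence $\pi^h_{V_{\Gamma_i}}(\eta\ge 0\text{ on }Q_i)\le\exp[-(1-o(1))|Q_i|\,\pi(\eta_0\ge h+1)]$, and multiplying over $i$ with $\sum_i|Q_i|=(1-o(1))L^2$ gives the claim.

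\textbf{Main obstacle.} The subtlest step is upgrading Theorem~\ref{th:p(h)} Eq.~\eqref{eq-p(h,h)} from the equality form to the needed inequality form $\pi(\eta_y\ge h+1\mid\eta_x\ge h+1)\le\exp[-ch^2/(\log h)^2]$ uniformly in $y\in\Z^2$. This requires revisiting the proof of Eq.~\eqref{eq-p(h,h)} and summing it over heights $k,k'\ge h+1$, verifying that the characteristic harmonic-pinnacle decay $\exp[-ch^2/(\log h)^2]$ persists. The hypothesis $h\ge\log\log L$ is invoked precisely so that this exponentially small conditional probability beats the combinatorial factor $|Q_i|^2\le\log^{16}L$ in the Bonferroni second-moment control, thereby securing the sharp $(1-o(1))$ constant in the final exponent.
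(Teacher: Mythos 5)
Your proposal follows essentially the same route as the paper: tile $\Lambda_L$ into boxes with surrounding corridors, use $\cP^c$ to extract in each corridor an outermost circuit on which $\eta=h$, condition on these circuits to decouple the box interiors via the domain Markov property, bound each per-box factor by a second-order Bonferroni argument together with the two-point estimate from Theorem~\ref{th:p(h)}, and multiply. (The paper uses $\ell=\lfloor\log^3 L\rfloor$ rather than $\log^4 L$ and phrases the corridor as a margin of an $\ell^+$-grid cell, but this is cosmetic.) You also correctly flag the one non-trivial sub-point: Eq.~\eqref{eq-p(h,h)} as stated concerns $\pi(\eta_z=h\mid\eta_0=h)$, while what is used here is an upper bound on $\pi(\eta_y\ge h+1\mid\eta_x\ge h+1)$; the paper silently invokes the same upgrade and it is indeed a point that needs a word of justification (it follows from Claim~\ref{cl:eta-eq-geq} together with the structure of the proof of~\eqref{eq-p(h,h)}).

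There is, however, one genuine gap. The step
``shifting $\eta\mapsto\eta-h$ and invoking FKG monotonicity for increasing events yields
$\pi^h_{V_{\Gamma_i}}(\eta\ge 0\text{ on }Q_i)\le\pi^h(\eta\ge 0\text{ on }Q_i)$''
is not a consequence of FKG. FKG gives positive association of increasing events under a single Gibbs measure; it does not give a stochastic comparison between $\pi_{V_{\Gamma_i}}$ and $\pi$, because conditioning on $\{\eta\equiv 0 \text{ on } V_{\Gamma_i}^c\}$ is neither an increasing nor a decreasing event. In fact the inequality you write is, if anything, in the wrong direction: after the shift, $\pi_{V_{\Gamma_i}}$ is the DG on a smaller domain with zero boundary, which is \emph{more} confined near $0$ and hence \emph{less} likely to dip below $-h$, so one expects $\pi_{V_{\Gamma_i}}(\eta\ge-h\text{ on }Q_i)\ge\pi(\eta\ge-h\text{ on }Q_i)$. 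What actually saves the argument is that the discrepancy is tiny: since every $x\in Q_i$ is at distance at least $\log^2 L$ from $\partial V_{\Gamma_i}$, decay of correlations (\cite{BW}) gives the two-sided estimate
$\pi^h_{V_{\Gamma_i}}(\eta_x<0)=\pi^h(\eta_x<0)+O(e^{-\log^2 L})$,
and this additive error is negligible compared to $\pi(\eta_0\ge h+1)$ because $h\le\log L$. This is precisely what the paper does, and once you replace your FKG step by this decay-of-correlations comparison, your proof is correct and matches the paper's.
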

\begin{proof}
Set
\[ \ell = \lfloor \log^3 L\rfloor \,,\qquad \ell^+ = \ell + 4\lfloor \log^2 L\rfloor\,,\]
partition the box $\Lambda_L$ into a grid of boxes $Q_i^+$, each of side-length $\ell^+$, and let $Q_i\subset Q_i^+$ be the
box of side-length $\ell$ centered in $Q_i^+$ (i.e., at distance $2\lfloor \log^2 L\rfloor$ from $\partial Q_i^+$).

Let $C_i$ denote the external-most circuit of sites such that
\begin{equation}\label{eq-Ci} \eta\restriction_{C_i} = h\,,\quad {\rm dist}(C_i,\partial Q_i^+) \leq \log^2 L\,.\end{equation}
We claim that, under the assumption $\cP^c$, necessarily such a circuit $\cC_i \subset Q_i^+$ exists. Indeed, if this were not the case then there would be a chain $C'$ crossing the frame of width $\log^2 L$ from $\partial Q_i^+$ where the heights all differ from $h$, contradicting $\cP^c$.

Condition on $C_i$ for each $i$, thereby de-correlating the marginals of $\eta$ on their interiors $V_i := V_{C_i}$, while noting that, crucially, this conditioning does not reveal any information on $\eta_{V_{i}}$ beyond the fact that $\eta\restriction_{C_i}=h$. It now easily follows that
\[ \pi^h_{V}\left( \eta \geq 0 ~,~ \cP^c \right) \leq \prod_{i} \sup_{V_i} \pi^h_{V_i}\left( \eta \geq 0 \right)
\leq \prod_{i} \sup_{V_i} \pi^h_{V_i}\left( \eta\restriction_{Q_i} \geq 0 \right)
\,,\]
where the supremum runs over all possible chains $C_i$ in the aforementioned frame as given in~\eqref{eq-Ci}.
To estimate the probabilities in the right-hand side we appeal to Bonferonni's inequalities, whence
\begin{multline*}
\pi^h_{V_i}\left( \eta\restriction_{Q_i} \geq 0 \right) \leq 1- \sum_{x\in Q_i}\pi_{V_i}^h(\eta_x<0)
+ \frac12 \sum_{x, y\in Q_i,\,x\neq y}\pi_{V_i}^h(\eta_x<0\,,\,\eta_y<0) \\
\leq 1- \sum_{x\in Q_i}\pi^h(\eta_x<0)
+ \frac12 \sum_{x,y\in Q_i,\, x\neq y}\pi^h(\eta_x<0\,,\,\eta_y<0) + O\left(|Q_i|^2 e^{-\log^2 L}\right)\,,
\end{multline*}
where the last inequality used the decay of correlation in the DG model (see, e.g.,~\cite{BW}) to replace the measure $\pi_{V_i}$ by $\pi$ thanks to the distance of $\log^2L$ between $Q_i$ and $\partial V_i$.
The summation over unordered pairs $x,y\in Q_i$ can be bounded from above by
\[ \sum_{\substack{x,y\in Q_i,\, x\neq y\\ \!\!\!{\rm dist}(x,y)\leq\log^2L}}\pi^h(\eta_x<0\,,\,\eta_y<0) +
\bigg(\sum_{x\in Q_i}\pi^h(\eta_x<0)\bigg)^2 + O\left(|Q_i|^2e^{-\log^2 L}\right)\,,\]
again by the decay of correlation. Moreover,
\begin{align*}
\sum_{\substack{x,y\in Q_i,\,x\neq y\\ {\rm dist}(x,y)\leq\log^2L}}\!\!\!\!\!\!\!\pi^h(\eta_x<0\,,\,\eta_y<0)&\leq
\sum_{x\in Q_i}\pi^h(\eta_0 <0)\!\!\!\!\!\!\! \sum_{\substack{y\in Q_i \\ {\rm dist}(x,y)\leq \log^2 L}} \!\!\!\!\!\!\!\pi^h\left(\eta_y <0\mid \eta_x <0\right) \\
&= o\bigg(\sum_{x\in Q_i} \pi^h(\eta_0 <0)\bigg)\,,
\end{align*}
using~\eqref{eq-p(h,h)} and that $\exp[-h^{2-o(1)}] = o(\log^{-4} L)$ since $h\geq \log\log L$. In conclusion,
as $|Q_i|e^{-\log^2 L} \ll \pi_h(\eta_0 < 0)$ for $h\leq\log L$, we obtain that
\begin{align*}
\pi^h_{V_i}\left( \eta\restriction_{Q_i} \geq 0 \right) &\leq e^{ - (1-o(1))\,|Q_i|\, \pi^h(\eta_0 < 0)} = e^{ - (1-o(1))\,|Q_i|\, \pi(\eta_0 \geq h+1)}\,.
 \end{align*}
The product over $(L/\ell^+)^2 = (1+o(1))L^2 / \ell^2$ squares $Q_i$ (recalling that $|Q_i|=\ell^2$) now shows that $
\pi^h_{V_i}( \eta\geq 0 \,,\,\cP^c) $ is at most $\exp[- (1-o(1))\pi(\eta_0\geq h+1)L^2 ]$, as required.
\end{proof}

\subsection{Two-point concentration for the surface height}

\begin{proposition}\label{prop:blackbox}
  Fix $\epsilon>0$. If $\beta$ is large enough and $\ell,h$ are two integers satisfying
  \begin{equation}\label{eq-l,h-relation} \frac{4\beta + 2}{\pi(\eta_0 \geq h)} \leq \ell \leq \frac{4\beta+4}{\pi(\eta_0 \geq h)} \end{equation}
  then the following holds. For any circuit of sites $C$ such that $|C|\leq (4+e^{-\beta})\ell$ and $V=V_C$ satisfies $\Lambda_\ell \subset V \subset \Lambda_{\bar\ell}$ for $\bar\ell=\lceil \ell+\log^2\ell \rceil$, with probability $1-O(e^{-\log^2 \ell})$ the configuration $\eta\sim \pif_{V}^{h-1}$ admits an $h$-contour $\gamma$ that encapsulates a square $\Lambda_{(1-\epsilon)\ell}$.
\end{proposition}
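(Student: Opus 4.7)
Write $A$ for the complement of the desired event---no $h$-contour of $\eta$ encapsulates $\Lambda_{(1-\epsilon)\ell}$---so the goal is to show $\pif_V^{h-1}(A) = O(e^{-\log^2\ell})$. Fix a rectangular reference contour $\gamma_*\subset V$ enclosing $\Lambda_{(1-\epsilon)\ell}$ with $|\gamma_*|\leq 4\ell$ and interior area $A(\gamma_*)=\ell^2-O(\ell\log^2\ell)$; this is possible since $\Lambda_\ell\subset V\subset\Lambda_{\bar\ell}$ with $\bar\ell-\ell=O(\log^2\ell)$. Condition~\eqref{eq-l,h-relation} creates a strict margin between the contour ``cost'' $\beta|\gamma_*|\leq 4\beta\ell$ and the floor-repulsion ``gain'' $\pi(\eta_0\geq h)A(\gamma_*)\geq(4\beta+2)\ell(1-o(1))$, leaving a surplus of $\sim 2\ell$ that will become the exponent of the final bound. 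Combining~\eqref{eq-l,h-relation} with Theorem~\ref{th:p(h)} also shows that $\ell\to\infty$ fast enough that $\ell\gg \log^2\ell$.

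The core of the argument is the partition function estimate
\[
\pif_V^{h-1}(A)\leq \frac{\cZ_V^{h-1}[\eta\geq 0]}{\cZ_V^{h-1}[\eta\geq 0,\,\sC_{\gamma_*,h}]}.
\]
For the denominator the plan is a surgery step: restrict to configurations where $\eta|_{\partial^+_{\gamma_*}}=h$ and $\eta|_{\partial^-_{\gamma_*}}=h-1$ so that the bonds across $\gamma_*$ contribute exactly $\beta|\gamma_*|$ and the ``inside'' and ``outside'' factor apart, paying an extra factor $e^{-\epsilon_\beta|\gamma_*|}$ for pinning the interfacial heights via DG rigidity. This yields
\[
\cZ_V^{h-1}[\eta\geq 0,\,\sC_{\gamma_*,h}]\geq e^{-(\beta+\epsilon_\beta)|\gamma_*|}\cdot \cZ_{V_{\gamma_*}}^{h}[\eta\geq 0]\cdot \cZ_{V\setminus V_{\gamma_*}}^{h-1}[\eta\geq 0].
\]
For the numerator I would establish the reverse ``Markov-like'' factorization
\[
\cZ_V^{h-1}[\eta\geq 0]\leq e^{\epsilon_\beta|\gamma_*|}\cdot \cZ_{V_{\gamma_*}}^{h-1}[\eta\geq 0]\cdot \cZ_{V\setminus V_{\gamma_*}}^{h-1}[\eta\geq 0]
\]
by conditioning on $\varphi=\eta|_{\partial V_{\gamma_*}}$ and summing, using the sharp decay-of-correlations of low-temperature DG (cf.~\cite{BW}) to force $\varphi$ into a bounded window around $h-1$ with geometric tails.

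Combining both displays and applying the shift $\eta\mapsto\eta+1$ to identify $\cZ_{V_{\gamma_*}}^h[\eta\geq 0]=\cZ_{V_{\gamma_*}}^{h-1}[\eta\geq -1]$, the ratio collapses to
\[
\pif_V^{h-1}(A)\leq e^{(\beta+\epsilon_\beta)|\gamma_*|}\cdot \pi_{V_{\gamma_*}}^{h-1,\,\eta\geq -1}(\eta\geq 0).
\]
An adaptation of Proposition~\ref{prop:grill}---downward fluctuations $\{\eta_x<0\}$ at well-separated sites are essentially independent, each of probability $\approx \pi(\eta_0\geq h)$---bounds the last factor by $\exp[-(1-o(1))\pi(\eta_0\geq h)|V_{\gamma_*}|]$. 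Plugging in $|\gamma_*|\leq 4\ell$ and $|V_{\gamma_*}|\pi(\eta_0\geq h)\geq(4\beta+2)\ell(1-o(1))$ yields $\pif_V^{h-1}(A)\leq \exp[-2\ell+o(\ell)]$, far smaller than the claimed $e^{-\log^2\ell}$.

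The hard part is the numerator factorization. For the SOS model with its linear Hamiltonian such factorization is essentially free, but the DG's quadratic coupling across $\gamma_*$ means that naively dropping the cross term $\sum_{(x,y)\text{ across}}(\eta_x-\eta_y)^2\geq 0$ leaves $V_{\gamma_*}$ with a free boundary, and hence a divergent partition function. The fix is to exploit DG's sharp low-temperature rigidity: interfacial heights on $\partial V_{\gamma_*}$ concentrate in a bounded window around $h-1$, so that summing over $\varphi$ produces only an $e^{\epsilon_\beta|\gamma_*|}$ error---comfortably absorbed into the $\sim 2\ell$ surplus coming from the strict inequality in~\eqref{eq-l,h-relation}.
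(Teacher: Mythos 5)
The proposal has two serious problems.

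First, the anchoring display
\[
\pif_V^{h-1}(A)\;\leq\;\frac{\cZ_V^{h-1}[\eta\geq 0]}{\cZ_V^{h-1}[\eta\geq 0,\,\sC_{\gamma_*,h}]}
\]
is not an estimate at all: the right-hand side equals $1/\pif_V^{h-1}(\sC_{\gamma_*,h})\geq 1$. Worse, for \emph{any fixed} circuit $\gamma_*$ of length $\sim 4\ell$ the probability $\pif_V^{h-1}(\sC_{\gamma_*,h})$ of that precise contour appearing is exponentially small in $\ell$ (there are exponentially many candidate contours sharing the total probability mass), so no single reference contour can be made to dominate. The cancellation you carry out later (canceling $\cZ_{V\setminus V_{\gamma_*}}^{h-1}$ and shifting $\eta\mapsto\eta+1$) also only makes sense once the inequality is oriented correctly, and even so it would bound something like $1/\pif_V^{h-1}(\sC_{\gamma_*,h})$, not $\pif_V^{h-1}(A)$.

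Second, and more structurally, even a corrected version of your partition-function comparison would only establish that the floor forces the surface up --- that is, that \emph{some} $h$-contour (or some mass at height $\geq h$) must appear with high probability. It does not address the actual event $A$: that no $h$-contour \emph{encapsulates the square} $\Lambda_{(1-\epsilon)\ell}$. It is perfectly consistent with the surface rising that the height-$h$ region is scattered, or consists of several moderately large contours none of which contains $\Lambda_{(1-\epsilon)\ell}$. Ruling this out is precisely the missing step and cannot be obtained from a pure surface-repulsion estimate. The paper's proof handles it by splitting $\cB^c$ into the event that some macroscopic $h$-contour lies in a bad class $\cI$ (too long, or macroscopic with deficient area), which is controlled by the Peierls estimate of Proposition~\ref{prop:h-contour-upper}, and the event $\mac_h^c$ that there is \emph{no} macroscopic $h$-contour, which is where the floor/partition-function argument (via Lemma~\ref{lem:neq-j-path} and Proposition~\ref{prop:grill}) enters. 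Crucially, the isoperimetric Lemma~\ref{lem:isop} then upgrades ``some macroscopic $h$-contour not in $\cI$ exists'' to ``it contains a square of area $(1-\epsilon)\ell^2$.'' Your proposal contains no analogue of either the Peierls screening of bad contours or the isoperimetric step, so even after fixing the first inequality you would not reach the conclusion. The margin computation you perform (the surplus $\sim 2\ell$ from~\eqref{eq-l,h-relation}) is the correct arithmetic, and the ``surgery'' lower bound for the denominator is in the spirit of what the paper does when lower-bounding $\cZf_V^{h-1}$, so those pieces are salvageable --- but they belong in the treatment of $\mac_h^c$, not as a standalone argument.
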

The proof we will use a straightfowrard isoperimetric estimate which appeared, e.g., in~\cite{CLMST2}*{Lemma~2.2}); we include its short proof for completeness.
\begin{lemma}\label{lem:isop}
    For every $\epsilon>0$ there exists some $\delta>0$ so that the following holds.
 Let $\{\gamma_i\}$ be a collection of closed contours with areas $A(\gamma_1)\geq A(\gamma_2)\geq \ldots$, and suppose
   \[
   \sum_i|\gamma_i|\le (1+\delta)4L\quad \mbox{and}\quad\sum_i A(\gamma_i)\ge (1-\delta)L^2\,.
   \]
   Then the interior of $\gamma_1$ contains a square of area at least $(1-\epsilon)L^2$.
\end{lemma}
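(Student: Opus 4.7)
The plan is to first use the $\Z^2$ isoperimetric inequality $|\gamma|\ge 4\sqrt{A(\gamma)}$ to pin down $\gamma_1$ as nearly optimal, and then to exploit this near-saturation to locate a large inscribed square. Summing isoperimetry across the collection gives $\sum_i\sqrt{A(\gamma_i)}\le(1+\delta)L$. Using $A(\gamma_i)\le A(\gamma_1)$ for $i\ge 2$, one obtains
\[
\sum_{i\ge 2}A(\gamma_i)\le\sqrt{A(\gamma_1)}\sum_{i\ge 2}\sqrt{A(\gamma_i)}\le\sqrt{A(\gamma_1)}\bigl((1+\delta)L-\sqrt{A(\gamma_1)}\bigr),
\]
and combining with $\sum_i A(\gamma_i)\ge(1-\delta)L^2$ forces $\sqrt{A(\gamma_1)}\ge\frac{1-\delta}{1+\delta}L$, hence $A(\gamma_1)\ge(1-O(\delta))L^2$. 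In particular $|\gamma_1|$ sits within $O(\delta)L$ of its isoperimetric lower bound $4\sqrt{A(\gamma_1)}$, so essentially the total budget is concentrated on the single loop $\gamma_1$, and that loop is close to the extremal shape.

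Next I would show that the bounding box $a\times b$ of $V_{\gamma_1}$ (with $a\le b$) is nearly a square of side $L$. For a simply-connected polyomino one has $|\gamma_1|\ge 2(a+b)$ and $A(\gamma_1)\le ab$, so $a+b\le 2(1+\delta)L$ while $ab\ge(1-O(\delta))L^2$. The discriminant estimate $(a+b)^2-4ab\le O(\delta)L^2$ then gives $b-a\le O(\sqrt\delta)L$, and consequently $a,b\in[(1-O(\sqrt\delta))L,\,(1+O(\sqrt\delta))L]$. In particular the bounding box already forces any inscribed axis-aligned rectangle to be essentially as wide as it is tall.

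To finish I would extract the inscribed square from the interior of the near-optimal contour. Counting edges of $\gamma_1$ row-by-row and column-by-column gives $|\gamma_1|=2\sum_y c_y+2\sum_x c'_x$, where $c_y$ and $c'_x$ count the number of intervals composing each cross-section; the excess $|\gamma_1|-2(a+b)=O(\delta)L$ therefore implies that all but $O(\delta)L$ rows and columns are \emph{simple}, i.e.\ consist of a single interval. A pigeonhole argument on $\sum_y f(y)=A(\gamma_1)$ together with $f(y)\le a$ shows that all but $O(\delta^{1/4})L$ simple rows have $f(y)\ge(1-\delta^{1/4})L$, so on such \emph{good} rows $I_y=[l_y,r_y]$ is a single long interval (and analogously for columns). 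Bounding the total variation of the sequences $(l_y)$ and $(r_y)$ on good rows by the remaining perimeter budget, I would produce a common sub-interval $[L^*,R^*]$ of length $\ge(1-O(\delta^{1/4}))L$ present in a dense subset of good rows; intersecting with the analogous structure for columns yields an inscribed axis-aligned square of side $\ge(1-\epsilon)L$ (hence area $\ge(1-\epsilon)^2 L^2$, which after relabelling $\epsilon$ gives the stated bound) as soon as $\delta=\delta(\epsilon)$ is small enough.

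I expect the main obstacle to be the last step: although the perimeter budget is tight enough in principle to bound $\sum|l_{y+1}-l_y|+|r_{y+1}-r_y|$ across consecutive good rows, one must carefully partition the horizontal edges of $\gamma_1$ into the contributions from the left and right bounding-box sides, the top and bottom caps $f(y_1)+f(y_2)$, and the non-simple rows (through which $I_y\triangle I_{y+1}$ no longer equals $|l_y-l_{y+1}|+|r_y-r_{y+1}|$), and verify that every error term remains of order $\delta^{1/4}L$. Once this bookkeeping is in place, the near-square bounding box from Step 2 ensures that the inscribed rectangle from Step 3 is simultaneously wide and tall, which delivers the inscribed square of the claimed area.
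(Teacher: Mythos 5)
Your opening step matches the paper's: both combine the $\Z^2$ isoperimetric bound $|\gamma_i|\ge 4\sqrt{A(\gamma_i)}$ with the trivial fact that $A(\gamma_i)\le A(\gamma_1)$ for all $i$ (the paper writes this as $\sum\sqrt{a_i}\ge(\sum a_i)/\max_j\sqrt{a_j}$) to reach $A(\gamma_1)\ge\bigl(\tfrac{1-\delta}{1+\delta}\bigr)^2L^2$, i.e.\ that $\gamma_1$ nearly saturates isoperimetry. Where you diverge is the stability step that converts near-saturation into an inscribed square: the paper dispatches it in a single sentence by ``continuity'' and the uniqueness of the square among shapes of area $\ge 1$ and perimeter $\le 4$, delegating the details to~\cite{CLMST2}; you try to spell the stability argument out explicitly via bounding boxes, simple rows/columns and pigeonhole, which is the natural elementary route and, up through the bounding-box and counting estimates, is sound.

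The final step, however, has a genuine gap. The common interval $J$ of length $(1-O(\delta^{1/4}))L$ shared by all good rows is automatic from the bounding box (any interval of length $\ge(1-\delta^{1/4})L$ inside $\{1,\dots,a\}$ with $a\le(1+O(\sqrt\delta))L$ already contains the central $(1-O(\delta^{1/4}))L$ segment), so the total-variation bookkeeping you flag is not what is missing. What is missing is a way to handle the up to $O(\delta^{1/4})L$ \emph{bad} rows interspersed among the good ones: you only know $J\times\{y\}\subset V_{\gamma_1}$ for good $y$, so $J\times[1,b]$ need not lie in $V_{\gamma_1}$, and ``intersecting with the analogous structure for columns'' shows only that the points of $J\times J'$ outside $V_{\gamma_1}$ sit in (bad columns)$\times$(bad rows) --- a sparse but possibly nonempty product, not an empty one. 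Closing this requires an extra topological ingredient, namely that $V_{\gamma_1}$ is simply connected (it is the interior of a single closed contour): any cell of $J\times J'$ outside $V_{\gamma_1}$ must then be connected, inside $V_{\gamma_1}^c$, through a ``slot'' reaching the exterior of the bounding box, and the length of that slot is charged against the small perimeter excess, which bounds the depth of any dent and lets one shrink $J\times J'$ to an inscribed square. Without invoking simple-connectivity (or some equivalent) the step you describe does not actually produce an inscribed square.
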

\begin{proof}
Observe that $\sum\sqrt{a_i} \geq (\sum a_i)/(\max_j\sqrt{a_j})$ holds for any $a_1,\ldots,a_n \in \R_+$, which
together with the $\Z^2$ isoperimetric bound $A(\gamma)\leq |\gamma|^2/16$ yields
\[
   (1+\delta)4L\geq \sum_i|\gamma_i|\geq
   4\sum_i \sqrt{A(\gamma_i)} \geq
   4\frac{\sum A(\gamma_i)}{\sqrt{A(\gamma_1)}} \geq \frac{4(1-\delta)L^2}{\sqrt{A(\gamma_1)}}\,.
\]
Rearranged, $A(\gamma_1) \geq (\frac{1-\delta}{1+\delta})^2 L^2$, and the result now follows from continuity since the square is the unique shape
in $\Z^2$ with area at least $1$ and perimeter at most $4$.
\end{proof}
\begin{proof}[\emph{\textbf{Proof of Proposition~\ref{prop:blackbox}}}]
Set $\delta = 2/\beta$ and let $\cB$ be the event under consideration, i.e., that there exists an $h$-contour $\gamma$ such that $V_\gamma \supset \Lambda_{(1-\epsilon)\ell}$.
  Further let $\cI$ be the set of all contours $\gamma$ that satisfy
  \[\mbox{ either }\quad|\gamma|>(4+3\delta)\ell\quad \mbox{ or } \quad
  \left\{\begin{array}{l} \quad|\gamma|>\log^2\ell \\
   A(\gamma)<(1-3\delta)\ell^2
  \end{array}\right.\,.
  \]
  By Lemma~\ref{lem:isop}, the combination of $|\gamma|\leq (4+3\delta)\ell $ and $A(\gamma)\geq (1-3\delta)\ell^2 $ implies $\cB$ provided that $\beta$ is large enough (and hence $\delta$ is small enough). Thus, if $\cB^c$ occurs then either there is no macroscopic $h$-contour $\gamma$, or some $\gamma\in\cI$ is such an $h$-contour, so
  \begin{align}\label{eq-B}
  \pif_{V}^{h-1}(\cB^c) \leq  \pif_{V}^{h-1}\left(\mbox{$\bigcup_{\gamma\in \cI}$} \sC_{\gamma,h} \right)  +  \pif_{V}^{h-1}\left(\mac_h^c \right)\,.
  \end{align}
  For the first term in~\eqref{eq-B}, we use Proposition~\ref{prop:h-contour-upper}.
If $(4+3\delta)\ell < |\gamma| < 10\ell$ then
    \begin{align*}
 \pif_{V}^{h-1}(\sC_{\gamma,h}) &\leq
\exp\left[-\beta(4+3\delta)\ell +\pi(\eta_0\geq h){\bar\ell}^2 + e^{-(\frac{\pi\beta}2+o(1))h^2/\log h}\ell\log\ell\right] \\
      &\leq \exp\left[-\left(2- O\Big(\frac{\log^2\ell}\ell\Big)- e^{-(\frac{\pi\beta}2+o(1))\frac{h^2}{\log h}}\log\ell\right) \ell  \right]
       = e^{-(2-o(1))\ell} \,,
    \end{align*}
where the second inequality used the upper bound on $\pi(\eta_0\geq h)$ from our hypothesis, while the last inequality
used the fact that $\exp(-h^{-2+o(1)}) < (\log\ell)^{-10}$ for large $\ell$ (with room to spare).
This clearly outweighs the total of $O(\ell^2 3^{10\ell})$ possible such $\gamma$, yielding an overall estimate of $\exp(-(2-o(1)) \ell)$ for $\bigcup\{ \sC_{\gamma,h}:(4+3\delta)\ell<|\gamma|<10\ell\}$.

Whenever $|\gamma| > 10\ell$ we can break the factor $\exp(-\beta|\gamma|)$ into two equal parts, utilizing one as above and the other to help with the enumeration over the contours $\gamma$; namely,
    \begin{align*}
 \pif_{V}^{h-1}(\sC_{\gamma,h}) &\leq
\exp\left[-\left(\beta/2 - e^{-(\frac{\pi\beta}2+o(1))h^2/\log h}\log\ell\right)|\gamma|\right] e^{-\left(\beta-4\right) \ell  }  \leq e^{-\left(\beta/2 - o(1)\right)|\gamma|}\,,
    \end{align*}
and so $\sum_{k\geq 10\ell} \sum_{|\gamma|\geq k} \pif_{V}^{h-1}(\sC_{\gamma,h}) = O(\exp(-\ell))$, say, when $\beta$ is large.

Finally, if $|\gamma|>\log^2\ell$ and $A(\gamma) < (1-3\delta)\ell^2$ we write $A(\gamma) < \sqrt{1-3\delta}\, \bar\ell |\gamma|/4$, yielding
    \begin{align*}
 \pif_{V}^{h-1}(\sC_{\gamma,h}) &\leq
      \exp\left[-\beta\left(1-\frac{\pi(\eta_0\geq h)\sqrt{1-3\delta}\,\bar\ell}{4\beta} + e^{-(\frac{\pi\beta}2+o(1))h^2/\log h}\log\ell\right) |\gamma|  \right] \\
      &\leq \exp\left[-\beta\left(1-(1+\delta/2)\sqrt{1-3\delta}+ o(1)\right) |\gamma| \right]
      = e^{-\beta\left(\delta-O(\delta^2)\right)|\gamma|}\,,
    \end{align*}
with the second inequality again stemming from our upper bound on $\pi(\eta_0\geq h)$. This is equal to $\exp[-(2-\epsilon_\beta)|\gamma|]$ where $\epsilon_\beta=O(1/\beta)$ and so,
for large $\beta$, this easily outweighs the enumeration over the contour $\gamma$ including its starting position (since $|\gamma|>\log^2\ell$).

Altogether we have shown that $\pif_{V}^{h-1}(\bigcup_{\gamma\in\cI} \sC_{\gamma,h}) < O(e^{-\log^2\ell})$
and can now turn our attention to the second term in~\eqref{eq-B}.
Given our boundary conditions at height $h-1$, if there are no macroscopic $h$-contours and yet there are macroscopic contours for some $h'\neq h$ then there necessarily must exist some macroscopic negative contour. This, in turn, has probability $O(\exp(-\log^2\ell))$ for large enough $\beta$ thanks to~\eqref{e:negativeContour}; thus,
\begin{equation}
   \label{eq-no-macro1}
\pif_{V}^{h-1}\left(\mac_h^c\right) = \pif_{\Lambda_\ell}^{h-1}\left(\mac_*^c\right) + \pif_{V}^{h-1}(\mac_* \setminus \mac_h) \leq \pif_{V}^{h-1}\left(\mac_*^c\right)+
O\big(e^{-\log^2 \ell}\big)\,.
 \end{equation}
To estimate $\pif_{V}^{h-1}(\mac_*^c)$ we consider whether the event $\cP=\cP^{\neq h-1}_{\log^2L}$ from~\eqref{eq-def-P-event} (a path along which $\eta \neq h-1$  connecting points at distance at least $\log^2 L$ in $\Lambda_L$) occurs or not, abbreviating it here by $\cP$. By Lemma~\ref{lem:neq-j-path}, $\pif_{V}^{h-1}\big(\mac_*^c~,~ \cP\big) = O(e^{-\log^2\ell})$, so
\begin{align}\label{eq-no-macro2}
\pif_{V}^{h-1}\left(\mac_*^c\right) \leq \pif_{V}^{h-1}\left( \cP^c\right) + O\big(e^{-\log^2\ell}\big)\,.
\end{align}
It remains to assess the probability of $\cP^c$, to which end we will leverage Proposition~\ref{prop:grill}.
Put $\cZf_{V}^{j}$ for the partition function restricted to configurations on $V$ with boundary conditions $j$ (we omit this superscript when $j=0$ and there is no ambiguity) and a floor at 0, and similarly for $\cZ_V^j$ (in the absence of a floor), whence
\begin{align*}
  \pif_{V}^{h-1}\left(\cP^c\right) &= 
  \frac{\cZ^{h-1}_{V} \, \pi^{h-1}_{V}\left(\eta \geq 0 ~,~ \cP^c\right) }{\cZf^{h-1}_V}
  =  \frac{\cZ_{V}}{\cZf^{h-1}_V} \pi^{h-1}_{V}\left(\eta \geq 0 ~,~ \cP^c\right)\,,
\end{align*}
where $\cZ_V^{h-1}=\cZ_V^{0}$ due to translation invariance.
Observe that if $V_\star = V \setminus \partial V$ (the subset of $V$ excluding the sites adjacent to its boundary) then
 $\cZf_V^{h-1} \geq e^{-\beta|\partial V|} \cZf_{V_\star}^h$ by restricting our summation to configurations with value $h$ along $\partial V$. Thus,
\begin{align*} \cZf_V^{h-1} &\geq e^{-\beta|\partial V|} \,\cZ_{V_\star}^h \, \pi^h_{V_\star}(\eta \geq 0) \geq
e^{-\beta|\partial V|} \, \cZ_{V_\star} \, \prod_{x\in V_\star}\pi_{V_\star}(\eta_x \geq h+1)  \\
&\geq
\exp\left[-\beta|\partial V| - \pi(\eta_0 \geq h+1)|V_\star|^2 - e^{-(\frac{\pi\beta}2+o(1))\frac{h^2}{\log h}}\ell\log\ell \right] \cZ_{V_\star} \,,
\end{align*}
where the second inequality is by FKG and the $\ell\log\ell$ error term arises due to points close to $\partial V_\star$ where the approximation of $\pi_{V_\star}$ via the infinite-volume measure $\pi$ fails (exactly as in the proof of Proposition~\ref{prop:h-contour-upper}). In our situation $h \geq\log\log\ell$ for large $\ell$ (our hypothesis~\eqref{eq-l,h-relation}, in view of Theorem~\ref{th:p(h)}, in fact shows that $h= (\log \ell)^{1/2+o(1)}$), making the pre-factor of $\ell\log\ell$ in the above exponent be less than, say, $(\log\ell)^{-10}$. Moreover, $|V_\star| = \ell^2 + O(\ell\log^2\ell)$ (being sandwiched between $\Lambda_\ell\setminus \partial\Lambda_\ell$ and $\Lambda_{\bar\ell}\setminus \partial\Lambda_{\bar\ell}$) whereas $\pi(\eta_0 \geq h+1) = \ell^{-1+o(1)}$, and from the last two estimates we now get that
\begin{align*}
  \pif_{V}^{h-1}\left(\cP^c\right) &\leq \frac{\cZ_{V}}{  \cZ_{V_\star}} \exp\Big[\beta|\partial V| + \pi(\eta_0 \geq h+1)\ell^2 + o(\ell) \Big]   \pi^{h-1}_{V}\left(\eta \geq 0 ~,~ \cP^c\right)\,.
\end{align*}
The last term is handled by Proposition~\ref{prop:grill}, according to which this probability is at most
$\exp\left[-(1-o(1))\pi(\eta_0\geq h) \ell^2 \right] $. Finally, it is well-known (see, e.g.,~\cite{BW}) that $\cZ_V \leq \cZ_{V_\star} \exp(\epsilon_\beta |\partial V|)$ since the cluster-expansion of these partition functions agrees everywhere except on clusters incident to $\partial V$, whose contribution to the partition function is $\exp(\epsilon_\beta)$ provided $\beta$ is large (this can alternatively be seen by forcing the configuration of $\eta\sim\pi_V$ to be 0 along $\partial V$ at a cost of $\exp(-\epsilon_\beta |\partial V|)$).
Altogether,
\begin{align*}
  \pif_{V}^{h-1}\left(\cP^c\right) &\leq \exp\Big[- \big((1-o(1))\pi(\eta_0 \geq h)- \pi(\eta_0 \geq h+1)\big)\ell^2 + \left (\beta + \epsilon_\beta\right)|\partial V| +o(\ell)\Big]\\
  & \leq \exp\Big[-(1-o(1))\pi(\eta_0 \geq h)\ell^2 + (4\beta + \epsilon'_\beta)\ell\Big]\,,
\end{align*}
where for the inequality in the second line we used $\pi(\eta_0\geq h+1) \ll \pi(\eta_0 \geq h)$
and $|\partial V| \leq (4+e^{-\beta})\ell$. The lower bound on $\pi(\eta_0\geq h)$ now implies that
\begin{align*}
  \pif_{V}^{h-1}\left(\mac_*^c\right) &\leq \exp\Big[- \left(2 - \epsilon'_\beta - o(1)\right) \ell \Big]\,,
\end{align*}
and revisiting~\eqref{eq-B}--\eqref{eq-no-macro2} we conclude that $\pif^{h-1}_{V}(\cB^c)=O(e^{-\log^2\ell})$, as required.
\end{proof}

\begin{lemma}\label{lem:encapsulate}
Let $V$ be a region containing the square $\Lambda_\ell$, fix $\beta$ large enough and set $\bar\ell = \lceil \ell + \log^2\ell\rceil$. Let
$\cQ_\ell$ denote
  the event that $\eta\sim\pi_V$ admits a circuit of sites $C$ with
  \[ \eta\restriction_C = 0\,,\qquad \Lambda_\ell \subset V_C \subset \Lambda_{\bar\ell}\,,\qquad |C| \leq \big(1 + e^{-\beta}\big)4\ell\,.\]
  Then $\pi_{V}(\cQ_\ell) =1-O(e^{-\log^2\ell})$.
\end{lemma}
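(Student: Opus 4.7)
The plan is to construct $C$ as a slight perturbation of the minimum-length shell $S_1=\partial^{\rm ext}\Lambda_\ell$ --- the $4\ell+4$ sites of $V\setminus\Lambda_\ell$ externally adjacent to $\Lambda_\ell$ that form the minimum cycle enclosing $\Lambda_\ell$ --- detouring outward only around those rare connected clusters of $\{\eta\neq 0\}$ that meet $S_1$. Since the minimum circuit length is $4\ell+O(1)$, the task reduces to showing that the aggregate detour length is at most $4\ell e^{-\beta}$ with probability $1-O(e^{-\log^2\ell})$.

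The main input is the Peierls estimate underlying Claim~\ref{cl:eta-eq-geq}: for any fixed site $x$ and any $r\ge 4$,
\[
\pi_V\bigl(x\text{ is surrounded by a contour of length }r\bigr)\;\le\; r(4e^{-\beta})^{r}\,.
\]
A union bound over $x\in S_1$ with $r=\log^2\ell$ shows that, except on an event of probability $O(e^{-c\log^2\ell})$, every connected component $K$ of $N:=\{y:\eta_y\neq 0\}$ meeting $S_1$ has outer contour length at most $\log^2\ell$ --- and therefore diameter at most $\log^2\ell/2$, so $K$ lies entirely inside the annulus $\Lambda_{\bar\ell}\setminus\Lambda_\ell$ (whose width is also $\sim\log^2\ell$). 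Call this event $\cG$.

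On $\cG$ we build $C$ by walking along $S_1$ clockwise and, whenever arriving at a site of a cluster $K$, detouring along the external vertex boundary $\partial^{\rm ext}K$ (all of whose sites have $\eta=0$ by the very definition of a connected component) before rejoining $S_1$ past $K$. The resulting closed $0$-circuit satisfies $\Lambda_\ell\subset V_C\subset\Lambda_{\bar\ell}$ and
\[
|C|\;\le\;|S_1|+X,\qquad X:=\sum_{K:\,K\cap S_1\neq\emptyset}|\partial K|\,,
\]
where $|\partial K|$ is the length of the outer contour of $K$. Iterating the above contour estimate,
\[
\E_V[X]\;\le\;|S_1|\sum_{r\ge 4}r^{2}(4e^{-\beta})^{r}\;=\;O(\ell e^{-4\beta})\,,
\]
already comfortably below the target $4\ell e^{-\beta}$. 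Upgrading this to the tail bound $\pi_V(X\ge 4\ell e^{-\beta})\le e^{-\log^2\ell}$ uses the strong decay of correlations for the DG at large $\beta$ supplied by the cluster expansion of~\cite{BW}: the local functionals $|\partial K(x)|\ind_{x\in N}$ attached to $x\in S_1$ at mutual distance $\gg 1$ are nearly independent, and combined with the deterministic envelope $|\partial K(x)|\le\log^2\ell$ valid on $\cG$, a Bernstein-type concentration inequality delivers the claimed stretched-exponential tail for $\ell$ large enough.

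The main obstacle is precisely this final concentration step: the expectation of $X$ is many orders of magnitude below the threshold, but boosting this to the $e^{-\log^2\ell}$-tail required by Proposition~\ref{prop:blackbox} relies on the near-independence of the local cluster functionals, furnished either by the cluster expansion of~\cite{BW} or, alternatively, by a multi-Peierls enumeration of disjoint collections of contours entirely analogous to the one leading to~\eqref{eq-external-plus-minus}.
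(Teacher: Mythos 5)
Your construction is essentially the one in the paper: trace $\partial^{\rm ext}\Lambda_\ell$ and detour outward around the external-most clusters of $\{\eta\neq 0\}$ that it meets, then control the aggregate detour length by a contour estimate. The Peierls input, the union bound giving the deterministic envelope $|\partial K|\leq\log^2\ell$ (which also handles the containment $V_C\subset\Lambda_{\bar\ell}$ and rules out a cluster surrounding $\Lambda_\ell$), and the expectation bound $\E[X]=O(\ell e^{-4\beta})$ all match the paper's calculation.

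The one place where you depart is the final concentration step, and there your \emph{preferred} route --- a Bernstein-type tail built from ``near-independence'' of the local functionals $x\mapsto|\partial K(x)|\ind_{x\in N}$ supplied by the cluster expansion --- is left as a sketch and would need real work: Bernstein-type bounds for sums of weakly dependent site-indexed observables are not off the shelf and would require a careful coupling or martingale decomposition on top of the decay of correlations, so it is not a self-contained argument. Your \emph{alternative} route, a multi-Peierls enumeration of disjoint collections of contours incident to $\partial\Lambda_\ell$ analogous to the one behind~\eqref{eq-external-plus-minus}, is exactly what the paper does: one sums
\[
\sum_{k\geq 1} 2^k \!\!\sum_{\substack{\gamma_1,\dots,\gamma_k\\ \sum|\gamma_i|\geq e^{-\beta}\ell}}\!\! e^{-\beta\sum|\gamma_i|}
\;\leq\;
e^{-\frac{\beta}{2}e^{-\beta}\ell}\sum_{k\geq 1}\binom{4\ell}{k}\Big(2\sum_{r\geq 4}\big(3e^{-\frac{\beta}{2}}\big)^{r}\Big)^{k}
\;\leq\; e^{-\big(\frac{\beta}{2}e^{-\beta}-c' e^{-2\beta}\big)\ell},
\]
which is self-contained, avoids the cluster expansion entirely, and in fact delivers a tail exponential in $\ell$ --- much stronger than the $e^{-\log^2\ell}$ you were aiming for (the $e^{-\log^2\ell}$ in the lemma statement is dictated by the containment $V_C\subset\Lambda_{\bar\ell}$, not by the length bound). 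I would simply promote that alternative to be the main argument and drop the appeal to concentration-of-weakly-dependent-sums.
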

\begin{proof}
As already used above, the probability that a given $\gamma$ is an external-most contour (positive or negative) in $\eta\sim\pi_{V_{\varphi}}$ is at most $\exp(-\beta|\gamma|)$.
Hence, the probability
that $\Lambda_\ell$ is surrounded by a positive or negative external-most contour $\gamma$ (which must then satisfy $|\gamma| \geq  4\ell$ as well as intersect the $x$-axis of the bottom face of $\Lambda_\ell$ at distance at most $|\gamma|/2$ to its right, for instance) is at most
\[ 2\sum_{|\gamma|\geq 4\ell} \frac{|\gamma|}2 3^{|\gamma|} e^{-\beta|\gamma|} = O\big(e^{-\ell}\big)\]
for large enough $\beta$ (here the first factor of 2 accounted for the sign of $\gamma$).

Similarly, setting $\delta=e^{-\beta}$, the probability that $\partial \Lambda_\ell$ is incident to any collection of external-most contours (positive or negative) of total length at least $\delta\ell$ is
at most
\begin{align*}
\sum_{k\geq1} 2^k &\sum_{\substack{\gamma_1,\ldots,\gamma_k\\ \sum|\gamma_i|\geq \delta\ell}} e^{-\beta\sum \gamma_i} \leq
e^{-\frac\beta2 \delta\ell} \sum_{k\geq 1} \binom{4\ell}k \bigg(2 \sum_{r\geq 4} \left(3e^{-\frac\beta2}\right)^r\bigg)^k\,,
\end{align*}
where the restriction $r\geq 4$ comes from the minimal length of a closed contour $\gamma_i$. For large $\beta$ the inner summation over $r$ is at most $ c e^{-2\beta }$
and the entire summation over $k$ is at most $\exp\big[c'  e^{-2\beta} \ell\big]$, translating the above estimate into $\exp\big[ -\big(\frac\beta2 \delta - c'e^{-2\beta}\big)\ell\big]$.
By our choice of $\delta=e^{-\beta}$ we see that the pre-factor of $\ell$ is positive for large enough $\beta$.

The fact that $\eta\equiv 0$ outside of its external-most contours
implies that one can form $C$ by following $\partial \Lambda_\ell$ while detouring around the external-most contours it intersects, so that $|C| \leq 4\ell +e^{-\beta}\ell$ with probability $1-O(\exp(c \ell))$ for some $c(\beta)>0$.
Moreover, for $C$ defined in this way to step beyond the box $\Lambda_{\bar\ell}$ we must find an external-most contour incident to $\partial \Lambda_\ell$ whose length is at least $\log^2\ell$, an event whose probability is $O(\ell e^{-\beta\log^2\ell})$ under $\pi_{V_\varphi}$.
This concludes the proof.
\end{proof}

\subsection{Proof of Theorem~\ref{mainthm:floor-shape}}
Set $H=H(L)$ as in~\eqref{eq-H-def} to be the maximum integer such that $\pi(\eta_0\geq H)\geq 5\beta/L$. Observe that by~\eqref{eq-ratio-p(h)/p(h-1)}--\eqref{eq-p(h)} we have
\begin{align*}
  H-1 &\geq \frac{\exp[(\log L)^{1/2-o(1)}]}L\,,\qquad H+2 \leq \frac{\exp[-(\log L)^{1/2-o(1)}]}L\,.
\end{align*}
Next, define
\[ \ell = \left\lfloor\frac{4\beta + 3}{\pi(\eta_0 \geq H-1)}\right\rfloor\qquad\left( = L^{-1+o(1)}\right)\,,\]
and note that $\ell$ and $h=H-1$ satisfy the relation~\eqref{eq-l,h-relation} for large enough $L$ (the lower bound holds provided $\pi(\eta_0\geq h) $ is small enough, our case here as $H\to\infty$ with $L$).
We will sequentially show a high probability for the event $\cR_j$ ($j=0,\ldots,H-1$) given by 
\[ \cR_j = \left\{ \exists \mbox{ a circuit of sites $C$ }:\; \eta\restriction_C\geq j \,,\; V_{C} \supset \Lambda_{L-j \ell}\right\}\,.\]
Of course, $\pif_{\Lambda_L}(\cR_0)=1$, and therefore it will hence suffice to show that
\begin{equation}
  \label{eq-(j-1)-to-j}
  \pif_{\Lambda_L}(\cR_j^c\,,\,\cR_{j-1}) = O\big(e^{-\log^2\ell}\big)\quad\mbox{ for any $j=1,\ldots,H-1$}
\end{equation}
in order to deduce $\cap_{j<H} \cR_j$ via a union-bound over the $(\log L)^{1/2+o(1)}$ possible $j$'s.

To prove~\eqref{eq-(j-1)-to-j}, expose all the external-most circuits $C_0$ in $\Lambda_L$ where $\eta\restriction_{C_0} \geq j-1$. The event $\cR_{j-1}$ says that the area of (precisely) one of these circuits of sites will be at least $[L-\ell (j-1)]^2=(1-o(1))L^2$. Crucially, on this event, our only information on the configuration in the interior of this circuit $C_0$ is that $\eta\restriction_{\partial V_{C_0}}\geq j-1$.

Next, consider some square $\Lambda_\ell \subset V_{C_0}$. We wish to find a circuit of sites $S$ tightly encapsulating $\Lambda_\ell$ such that $\eta\restriction_{S}\geq j-1$. To this end, by monotonicity we can drop the floor, and further set the boundary conditions on $V_{C_0}$ to be exactly $j-1$. An application of Lemma~\ref{lem:encapsulate} now finds that with probability $1-O(e^{-\log^2\ell})$ the event $Q_\ell$ holds, i.e., there exists such an $S$ (in fact, one satisfying $\eta\restriction_S=j-1$) for which
\begin{equation}\label{eq-S-chain} |S|\leq (1+e^{-\beta})4\ell\,,\quad \Lambda_\ell \subset V_S \subset \Lambda_{\ell+\log^2\ell}\,.\end{equation}
Back in the setting of $\pif_{\Lambda_L}$ and a given $\Lambda_\ell$, condition on the external-most such circuit $S$ within the bigger box $\Lambda_{\ell+\log^2\ell}$ satisfying~\eqref{eq-S-chain}, guaranteed to exist with probability $1-O(e^{-\log^2\ell})$. (As before, this reveals no information on the interior of $V_S$.)

Our next goal is to find a large circuit of sites $C_1$ in $V_S$ such that $\eta\restriction_{C_1} \geq j$ and $V_{C_1} \supset \Lambda_{(1-\epsilon)\ell}$ for some small $\epsilon>0$.
For this purpose, again by monotonicity, we may drop the floor to height $j-(H-1)$ (thus translating the distribution on $V_S$ to $\pif_{V_S}^{h-1}$ for $h=H-1$). The aforementioned properties of $S$ now justify an application of Proposition~\ref{prop:blackbox}, which shows that the sought $C_1$ exists with probability $1-O(e^{-\log^2 \ell})$.

Recalling that $\ell=L^{-1+o(1)}$, the aforementioned probabilities of $O(e^{-\log^2 \ell})$ support a union bound over all possible locations for the box $\Lambda_\ell\subset \Lambda_L$. Clearly, for each pair of such boxes with a side-length overlap of $\ell/2$, the two respective circuits must intersect, and altogether we obtain the following: If $V_{C_0} \supset \Lambda_r$ for some $r$, then there is a single circuit along which $\eta\geq j$ whose interior contains $\Lambda_{r-\ell}$ (the outer frame of width $\ell/2$ in $\Lambda_r$ was waved in this argument). By the definition of the event $\cR_j$ we can take $r$ to be $L-(j-1)\ell$, and~\eqref{eq-(j-1)-to-j} now follows.

So far we have shown that with probability $1-O(\log^2\ell)$ the event $\cR_{H-1}$ occurs, i.e., there is a single circuit $C$ encapsulating an area of $(1-o(1))L^2$  such that $\eta\restriction_C \geq H-1$.
To get from level $H-1$ to level $H$ we apply a similar strategy, except now the designated $\ell$ we choose will satisfy~\eqref{eq-l,h-relation} w.r.t.\ $h=H$.
Recalling that $L \pi(\eta_0 \geq H) \geq 5\beta$, starting at $\ell=L$ and repeatedly decreasing $\ell$ by $1$ modifies the right-hand side that was initially $5\beta$ by $L^{-1+o(1)}$ in each step, and so certainly it is feasible to find such an $\ell$, which will range from about $\frac45 L$ (when $\pi(\eta_0\geq H)$ is close to $5\beta/L$) to about $L/e^{-c H/\log H}=L^{1-o(1)}$.
The conclusion is now that there exists a single circuit $C$ such that $\eta\restriction_C\geq H$ and $V_C\geq(1-\epsilon)L^2$, where $\epsilon$ can be made arbitrarily small provided that $\beta$ is large enough.
We have thus proved that w.h.p.\ the configuration $\eta\sim\pif_{\Lambda_L}$ contains an $(H-1)$-contour of area $(1-o(1))L^2$ and an $H$-contour of area at least $(1-\epsilon)L^2$.

  As for level $H+2$, by definition  $\pi(\eta_0 \geq H+1) < 5\beta/L $, and it follows from Eq.~\eqref{eq-ratio-p(h)/p(h-1)} in Theorem~\ref{th:p(h)}
  that
  \[ \pi(\eta_0 \geq H+2) = o(1/L)\,.\]
  Further note that $H \geq \sqrt{\log L}$ for large enough $L$, whereas $\log|\gamma| = O(\log L)$, and so the last term in~\eqref{e:contourFloorBound} is $o(|\gamma|)$.
  The fact that $A(\gamma) \leq |\gamma|L/4$ then implies that
  \[ \pif_{\Lambda_L}(\gamma) \leq e^{-(\beta-o(1))|\gamma|}\,,\]
  and summing over all macroscopic contours $\gamma$ rules out the event $\mac_{H+2}$ except with the usual probability of $O(e^{-\log^2\ell})$. Similarly, within the aforementioned $H$-contour there are no macroscopic negative contours, as again each such potential $\gamma$ has a probability of $e^{-\beta|\gamma|}$.
  The proof is therefore completed by Lemma~\ref{lem:negative,H+2}.
  \qed

\section{Extensions to other random surface models}\label{sec:other-p}

In this section we extend our results on the DG to all values of $1< p \leq \infty$ including the restricted solid-on-solid model $(p=\infty$).  In order to use the proof from Section~\ref{sec:floor} we need to establish the analogues of equations~\eqref{eq-ratio-p(h)/p(h-1)}, \eqref{eq-p(h)} and~\eqref{eq-p(h,h)} for the asymptoics of  $\pi\left(\eta_0 \geq h\right)$, $\frac{\pi\left(\eta_0 \geq h\right)}{\pi\left(\eta_0 \geq h-1\right)}$ and $\pi\left(\eta_z\geq h \mid \eta_0 \geq h\right)$.

\subsection{Between SOS and the Discrete Gaussian ($1<p<2$)}\label{sec:1<p<2}

We begin with the case of $1 < p < 2$ in which large deviations of the surface are formed by thin spikes which are of a constant width for most of their height but unlike the case of $p=1$ have a growing width at their base.
\begin{theorem}\label{thm:max1p2}
Let $1<p< 2$ and fix $\beta= \beta(p) > 0$ large enough.  Then there
exists $c_p>0$ such that, for $\eta$ given by the infinite volume $p$-SOS model in $\Z^2$ at inverse-temperature $\beta$,
 \begin{equation}\label{e:tailRate1p2}
\pi\left(\eta_0 \geq h\right) = \exp\left( -(c_p \beta +o(1))h^p \right)\,.
 \end{equation}
With $X_L$ denoting the maximum  on an $L\times L$ box with 0 boundary condition there is a sequence $M=M(L)$ with
 \begin{equation}
   \label{eq:E[XL]1p2}
   M(L) \sim \bigg(\frac{2+o(1)}{c_p\beta}\log L\bigg)^{1/p}
 \end{equation}
such that $X_L \in \{ M,M+1\}$ with probability going to 1 as $L\to\infty$.
Furthermore, there exists some $H=H(L)$ with $H \sim \big(\frac{1+o(1)}{c_p\beta}\log L\big)^{1/p}$ such that w.h.p.\
\begin{equation}\label{eq-height-conc1p2}
 \#\big\{v : \eta_v \in \{H,H+1\}\big\} \geq (1-\epsilon_\beta) L^2\,,
\end{equation}
where $\epsilon_\beta$ can be made arbitrarily small as $\beta$ increases.
 \end{theorem}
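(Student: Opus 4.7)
The strategy is to reduce Theorem~\ref{thm:max1p2} to proving the three infinite-volume estimates of Theorem~\ref{th:p(h)} with the $p$-SOS Hamiltonian $\cH_p(\eta)=\sum_{x\sim y}|\eta_x-\eta_y|^p$ in place of the quadratic one: the ratio bound analogous to~\eqref{eq-ratio-p(h)/p(h-1)}, the sharp large deviation asymptotic~\eqref{e:tailRate1p2} itself (which takes the role of~\eqref{eq-p(h)}), and the two-point estimate analogous to~\eqref{eq-p(h,h)}. Once these are in hand, the proofs of Theorems~\ref{mainthm:max} and~\ref{mainthm:floor-shape} transfer essentially verbatim: the Peierls-based machinery of Section~\ref{sec:floor} (Propositions~\ref{prop:h-contour-upper}, \ref{prop:grill}, \ref{prop:blackbox} and the accompanying lemmas) uses only that a broken bond costs at least $\beta$ (true for every integer-valued $p$-SOS with $p\ge 1$) together with exponential decay of correlations (standard at large $\beta$). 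Substituting~\eqref{e:tailRate1p2} into the defining equations $\pi(\eta_0\ge M)\asymp L^{-2}\log^5 L$ and $\pi(\eta_0\ge H)\asymp \beta/L$ then inverts to give the announced $M\sim (\tfrac{2}{c_p\beta}\log L)^{1/p}$ and $H\sim (\tfrac{1}{c_p\beta}\log L)^{1/p}$, with the two-point concentration of $X_L$ and of the plateau height following by repeating the first-moment and union-bound arguments of Sections~\ref{sec:ldev}--\ref{sec:floor}.

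The core new input is therefore the rate~\eqref{e:tailRate1p2}. I would first analyze the real-valued variational problem
\[
I^{(p)}(h)=\inf\Big\{\textstyle\sum_{x\sim y}|\varphi_x-\varphi_y|^p:\ \varphi:\Z^2\to\R\text{ finitely supported},\ \varphi_0=h\Big\}.
\]
For $1<p<2$, in contrast with the borderline case $p=2$, the $p$-capacity of a point in the plane is positive: the continuum $p$-harmonic fundamental solution has the radial form $u(x)\propto h\,|x|^{-(2-p)/(p-1)}$, which is $p$-Dirichlet-integrable at infinity since $(2-p)/(p-1)>0$. A lattice-vs.-continuum comparison (standard for nonlinear discrete elliptic problems) yields $I^{(p)}(h)=(c_p+o(1))h^p$, with an explicit constant $c_p=c_p(p)>0$ read off from the continuum $p$-Dirichlet energy, and with the effective support of the optimizer confined to the polynomial scale $R_p(h)\asymp h^{(p-1)/(2-p)}$ — the radius at which $u$ drops below $1$. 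To pass to the integer-valued problem I would construct an explicit trial profile consisting of a central spike (over the region where $|\nabla u|\gtrsim 1$) glued to a staircase pyramid with layers $S_k=B_{r_k}$ for $r_k=(h/k)^{(p-1)/(2-p)}$ on the outside; summing the staircase perimeter costs $\sum_k 4r_k$ with the spike contribution reproduces $c_p h^p$ to leading order. The matching lower bound follows by writing $\eta=\phi+\sigma$ as in Lemma~\ref{lem:1} and estimating the residue through the convexity inequality $|a+b|^p\geq\tfrac12|a|^p-C_p|b|^p$, the nonquadratic analogue of the Hilbert-space identity used in the DG case.

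Given~\eqref{e:tailRate1p2}, the ratio and two-point estimates follow the templates of \S\ref{sec:ldev}: for the ratio, an FKG-plus-Peierls argument exploiting that raising a pinnacle by $1$ costs $O(\beta h^{p-1})$ (the derivative of $c_p h^p$); for the two-point bound, the clean power-law rate (no logarithmic correction) makes a second excursion to height $h$ at distance $\gtrsim R_p(h)$ essentially double the cost, while closer pairs are controlled by monotonicity as in the proof of~\eqref{eq-p(h,h)}. The main anticipated obstacle is the nonlinear discretization analysis: because the $p$-Dirichlet form is not quadratic, the decomposition $\eta=\phi+\sigma$ does not factorize as cleanly as in Lemma~\ref{lem:1}, and one must stratify bonds by the magnitude of $|\nabla\phi|$ — those with $|\nabla\phi|\gtrsim 1$ treated by direct convexity, and those with $|\nabla\phi|\ll 1$ handled by the observation that rounding $\phi$ produces large integer plateaus whose perimeter contribution is already accounted for by the staircase estimate. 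Making this uniform in $1<p<2$, especially as $p\uparrow 2$ where the exponent $(p-1)/(2-p)$ diverges and the analysis merges with the logarithmic corrections of the DG case, is the delicate heart of the argument; once settled, the remainder of Theorem~\ref{thm:max1p2} is a direct translation of the proofs in Sections~\ref{sec:ldev}--\ref{sec:floor}.
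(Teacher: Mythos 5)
Your overall reduction strategy -- establish the three infinite-volume estimates (the rate, the ratio, the two-point bound) and then run the machinery of Sections~\ref{sec:ldev}--\ref{sec:floor} more or less verbatim -- is the right structure, and it matches the paper. But the way you propose to obtain the rate~\eqref{e:tailRate1p2} has a genuine gap, and it sits precisely in the regime $p\in\bigl(\tfrac{1+\sqrt5}{2},2\bigr)$ that you yourself flag as the delicate part.

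The problem is your trial profile for the lower bound on $\pi(\eta_0\ge h)$. You glue a central spike (over $r\lesssim h^{p-1}$, where $|\nabla u|\gtrsim 1$) to a staircase pyramid with level sets $B_{r_k}$, $r_k=(h/k)^{(p-1)/(2-p)}$, and assert that the staircase cost $\sum_k 4r_k$ is lower order. It is not, for $p$ near $2$. Writing $\alpha=(2-p)/(p-1)$, the staircase runs over levels $k=1,\dots,O(h^{p-1})$ with $r_k=(h/k)^{1/\alpha}$; for $p>3/2$ one has $1/\alpha>1$, the series $\sum_k k^{-1/\alpha}$ converges, and hence
\[
\sum_k r_k \;\asymp\; h^{1/\alpha} \;=\; h^{(p-1)/(2-p)}\,.
\]
But $h^{(p-1)/(2-p)}>h^{p}$ exactly when $p^2-p-1>0$, i.e.\ $p>\tfrac{1+\sqrt5}{2}\approx 1.618$. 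For those $p$ your trial configuration has energy $\gg h^p$ and gives a far weaker lower bound than required. The underlying reason is that rounding the continuum $p$-harmonic profile forces a unit gradient along each of $\Theta(h^{p-1})$ nested contours of perimeter $r_k$, while the continuum profile in that annular region spends only $O(h^{2(p-1)})$; the rounding cost dominates.

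The paper sidesteps this entirely by never rounding the full minimizer. It works directly with the \emph{discrete} real-valued minimizer $\phi^*$ of $E(\phi)=\sum_{x\sim y}|\phi_x-\phi_y|^p$ subject to $\phi_0=1$, $\phi\to0$ (existence and uniqueness from~\cite{Soardi}), and uses that $E(\phi^*)<\infty$ is approximated arbitrarily well by a \emph{fixed} finitely supported $\phi_f$ whose support depends only on the accuracy~$\epsilon$, not on $h$. The trial configuration $\lfloor h\phi_f\rfloor$ then has only $O_\epsilon(1)$ bonds, so the rounding error is $O_\epsilon(h^{p-1})=o(h^p)$ and one gets $\pi(\eta_0=h)\ge e^{-\beta(E(\phi^*)+\epsilon)h^p}$, i.e.\ $c_p=E(\phi^*)$. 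No lattice-to-continuum comparison is needed or used, and the ``effective radius'' scale is \emph{not} $h^{(p-1)/(2-p)}$: a Peierls move that lowers the interior of the outermost $1$-contour $\Gamma_1$ by one unit and then restores the origin costs only $O(h^{p-1})$ at the origin, which pins $|\Gamma_1|\lesssim h^{p-1}$ with high probability. This smaller radius is crucial: the entropy factor in the upper-bound argument is $(2h^2+1)^{O(|B_R|)}$, and $|B_R|=O(h^{2(p-1)})$ gives $e^{o(h^p)}$ because $2(p-1)<p$ for all $p<2$, whereas with your radius scale one would need $2(p-1)/(2-p)<p$, which fails for $p>\sqrt2$.

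Relatedly, for the probability upper bound the paper does not use the $\eta=\phi+\sigma$ decomposition and the convexity inequality $|a+b|^p\ge\tfrac12|a|^p-C_p|b|^p$ you propose (which, as you note, loses the clean Pythagorean structure). It instead combines three ingredients: the Peierls bound $|\Gamma_1|\lesssim h^{p-1}$; the exact scaling lower bound $\cH(\eta)\ge h^pE(\phi^*)$, valid for \emph{every} compactly supported real-valued configuration with $\eta_0\ge h$ by homogeneity and the minimality of $\phi^*$; and a crude entropy count. This bypasses the nonlinear discretization issues you rightly worry about. Finally, since the theorem is stated for each fixed $p$, the ``uniformity as $p\uparrow2$'' you anticipate as the central obstacle is a non-issue. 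Your sketches for the ratio bound and the two-point bound do otherwise follow the paper's template, and the claim that Propositions~\ref{prop:h-contour-upper}--\ref{prop:blackbox} transfer with only Peierls-type and decay-of-correlation inputs is correct.
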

 \begin{proof}
Using a standard Peierls-type argument --- a straightforward adaptation of the proof of~\cite{BW} --- we have
\begin{equation}\label{e:CrudeMax}
\pi(\eta_0 \geq h) \leq C \exp(-4\beta h)\,.
\end{equation}
We first control the size of  $|\Gamma_1|$, the outermost $1$-contour encircling the origin.
  Suppose that $\eta_0=h$ and $\eta_x \leq -h$ for some $x\sim 0$.  Then there must exist nested negative $(-i)$-contours $\gamma_i(\eta)$ for $1\leq i \leq h$ which each contain $x$ but not 0.  Consider the map
\[
\left(T_{\{\gamma_i(\eta)\}} \sigma\right)_y = \sigma_y + \sum_{i=1}^h \ind_{\{y\in V_{\gamma_i}\}}\,.
\]
Applying $T_{\{\gamma_i(\eta)\}}$ to $\eta$, the Hamiltonian decreases by at least one at every point along each $\gamma_i$, and also by at least $h^p$ along the bond from $0$ to $x$, so
\[
\pi(T_{\{\gamma_i(\eta)\}} \eta) \geq \pi(\eta) e^{\beta (h^{p}-h+\sum_{i=1}^h |\gamma_i| )}\,.
\]
Therefore,
\[
\P\left(\eta_x \leq - h \mid \eta_0 = h \right) \leq \sum_{\{g_i\}} \P\left(\eta_0 = h\right) e^{-\beta (h^p - h + \sum_{i=1}^h |g_i| )} \leq \P\left(\eta_0 = h\right) e^{-\beta h^{p}}\,.
\]
Next, let $T'$ denote the map
\[
\left(T_{\gamma} \eta\right)_y = \eta_y  - \ind_{\{y\in V_{\gamma}\}} + \ind_{\{y=0\}}\,,
\]
applied when $\eta_0=h$ and $\gamma=\Gamma_1$.  This map forces down the outermost 1-contour and then raises the origin by 1 (overall leaving the origin at $h$, unchanged). Then for $\eta$ with $\eta_0=h$ and $\min_{x\sim 0} \eta_x \geq - h$,
\[
\pi(T_{\Gamma_1}(\eta)) \geq \pi(\eta) e^{-\beta |\Gamma_1| +4\beta p (2h)^{p-1}}\,,
\]
and hence for large enough $C(\beta,p)$ we have that
\begin{align}\label{e:radius}
\pi(|\Gamma_1| > Ch^{p-1} \mid \eta_0 \geq h ) & \leq \pi(\min_{x\sim 0} \eta_x  \leq - h \mid \eta_0 = h ) + \sum_{\gamma:|\gamma|\geq Ch^{p-1}}  e^{-\beta |\gamma| +4\beta p (2h)^{p-1}}\nonumber\\
& \leq  e^{-\beta h^{p-1}}\,.
\end{align}
Now we define $\phi^*:\Z^2\to\R$ to be the unique minimizer of
\[
E(\phi)=\sum_{x\sim y} |\phi_x - \phi_y|^p
\]
subject to $\phi_0=1$ and $\lim_{|x|\to\infty} \phi_x= 0$ (see, e.g.,~\cite{Soardi}*{pp.176--178}).
\begin{lemma}\label{l:Al12PH}
For any $\epsilon>0$ and large enough $h$,
\[
e^{-\beta (E(\phi^*)+\epsilon) h^p} \leq \pi(\eta_0=h) \leq e^{-\beta (E(\phi^*)-\epsilon) h^p}.
\]
\end{lemma}
\begin{proof}
Fix $R=C h^{p-1}$ so that Eq.~\eqref{e:radius} holds.
As in the proof of Corollary~\ref{cor:2} we have that $\pi(\eta_{B_R}=0) \geq e^{-R^2}$.  For large enough $R$ we can find a finitely supported $\phi_f$ such that the support of $\phi_f$ is contained in $B_{R-1}$ and $E(\phi_f) - E(\phi^*) \leq \epsilon/2$.  Then
\[
\pi(\eta_0=h) \geq\pi\left(\eta_{B_R}=\lfloor \phi_f h \rfloor\right) =  \pi(\eta_{B_R}=0) e^{-\beta E(\lfloor \eta_f h \rfloor)} \geq e^{-\beta (E(\phi^*)+\epsilon) h^p}\,.
\]
For the upper bound, by Eq.~\eqref{e:radius} we use the fact that we can lower bound the energy by $h^{p} E(\phi^*)$.  We also know that given $\eta_0 \geq h$ w.h.p.\ there exists a circuit of radius at most $R$ around the origin on which $\eta$ is non-positive.  Hence, by monotonicity,
\begin{align*}
\pi(\eta_0 \geq h) &\leq \frac{\max_{\Lambda\subset B_R} \pi_{\Lambda}\left( \eta_0 \geq h\right)}{\pi\left(|\Gamma_1| \leq R \mid \eta_0 \geq h\right)}  \leq 2\max_{\Lambda\subset B_R} \pi_{\Lambda}\big(\max_{x\in\Lambda} \eta_x \geq h^2\big) \!+ 2e^{-\beta h^{p} E(\phi^*)}(2h^2+1)^{|\Lambda|}\\
 &\leq  e^{-\beta (E(\phi^*)-\epsilon) h^p}\,,
\end{align*}
where we used Eq.~\eqref{e:CrudeMax} to bound the probability that it exceeds $h^2$, that $h^{p} E(\phi^*)$ is a lower bound on the energy and the fact that $|\Lambda| = O(h^{2(p-1)})$.
\end{proof}

\begin{lemma}\label{e:consecutiveHeight1p2}
There exists $c(\beta,p)>0$ such that,
\[
\frac{\pi(\eta_0=h)}{\pi(\eta_0=h-1)} \leq e^{-c\beta h^{p-1}}\,.
\]
\end{lemma}
\begin{proof}
It is easy to see that for all $x\neq 0$ the value of $\phi_x^*$ must be strictly less than the maximum of its neighbours.  Let $\kappa= 1 - \max_{x\sim 0} \phi^*_x >0$.  By the uniqueness of $\phi^*$, for some $\delta >0$ we have
\[
\sup_{\substack{\phi: \phi_0=1\\ \max_{x\sim 0} \phi_x > 1-\kappa/2}} E(\phi) \geq E(\phi^*)+\delta\,,
\]
where the supremum is over all finitely supported $\phi$.  Similarly to Lemma~\ref{l:Al12PH}
\begin{align*}
\pi\left(\eta_0 \geq h ~,~\max_{x\sim 0} \eta_x \geq (1-\kappa/2)h\right) &\leq \pi\left(|\Gamma_1| \geq R\mid \eta_0\geq h\right)\pi(\eta_0\geq h)  \\
 + 2\max_{\Lambda\subset B_R} &\pi_{\Lambda}\left(\max \eta_x\in\Lambda \geq h^2\right) + 2e^{-\beta h^{p} (E(\phi^*)+\delta)}(2h^2+1)^{|\Lambda|}\\
 &\leq  e^{-\beta  h^{p-1}} \pi(\eta_0\geq h)\,.
\end{align*}
Hence, by considering the map $T(\eta)(x) = \eta_x - \ind_{\{x=0\}}$ we have that, whenever $\eta_0=h$ and $\max_{x\sim 0} \eta_x < (1-\kappa/2)h]$,
\[
\pi(T\eta) \geq e^{\beta p (\kappa/2 h^{p})^{p-1}} \pi(\eta)\,,
\]
and so
\begin{align*}
\pi(\eta_0 \geq h) &\leq \pi\left(\eta_0 \geq h~,~ \max_{x\sim 0} \eta_x \geq (1-\kappa/2)h\right) + e^{-\beta p (\kappa/2 h^{p})^{p-1}}\pi(\eta_0 \geq h-1)\\
&\leq e^{-c\beta h^{p-1}}\pi(\eta_0 \geq h-1)\,.\qedhere
\end{align*}
\end{proof}

The following lemma is the analogue of equation~\eqref{eq-p(h,h)} in Theorem~\ref{th:p(h)}.
\begin{lemma}\label{l:phh1p2}
There exists $c(\beta,p)>0$ such that for any $z\in \Z^2$,
\[
\pi(\eta_z= h\mid \eta_0= h) \leq e^{-c\beta h^{p-1}}\,.
\]
\end{lemma}

\begin{proof}
The proof is similar to the proof of equation~\eqref{eq-p(h,h)} in Theorem~\ref{th:p(h)} where we give more detailed explinations.
Fix $z\in \bbZ^2$ and let
\[
X:=\max_{x\sim
  z}\eta_x\,,\, \quad Y(\eta):=\min_{x\sim
  z}\eta_x\,.
\]
Given $0<\delta\le 1$, define the events $F= \{X\le
h\}$ and $E=\{Y\ge h-\delta h^{\frac{p-1}{p}}\}$. Similarly to before using Lemma~\ref{e:consecutiveHeight1p2} that $\pi(F^c\mid \eta_0=h)\le  O\left(e^{-c_1 \beta h^{p-1}}\right)$.
Therefore, it will suffice to establish a similar upper bound on $\pi(\eta_z=h\mid \eta_0=h\,,\, F)$.  Conditioning over the values of the
neighbors of $z$ and then using monotonicity yields
\[
\pi(\eta_z=h\mid \eta_0=h\,,\, E^c\,,\, F)\le
e^{-c' \delta^p h^{p-1}}\,.
\]
Finally, we will bound $\pi(E\mid \eta_0=h\,,\, F)$ from above as follows.
On one hand we have
\[
\pi\left(\eta_z\ge h+1\mid \eta_0=h\,,\, E\,,\, F\right)\ge e^{-4c_2 \beta \delta^p h^{p-1}}\,,
\]
while
\begin{align*}
\pi\left(\eta_z\ge h+1\mid \eta_0=h\,,\, E\,,\,F\right)
&\le
\frac{\pi(\eta_z\ge h+1\mid \eta_0=h)}{\pi\left(E\mid \eta_0=h\,,\, F\right)}
\le \frac{(1+o(1))e^{-c_1\beta h^{p-1}}}{\pi\left(E\mid \eta_0=h\,,\, F\right)}\,,
\end{align*}
Combining the last two displays gives
\[
\pi\left(E\mid \eta_0=h\,,\, F\right)\le (1+o(1))e^{- \beta (c_1- 4c_2\delta^p)h^{p-1}}\,,
\]
and the proof is completed by choosing $c_2 \delta^p< c_1/4$.
\end{proof}

The proof of Theorem~\ref{thm:max1p2} now follows.  Equation~\ref{e:tailRate1p2} follows by Lemma~\ref{l:Al12PH}.  Having bounded the tails of the  height distribution together with the estimates in Lemmas~\ref{e:consecutiveHeight1p2} and~\ref{l:phh1p2}, the size of the maximum height follows from essentially the same proof as Theorem~\ref{mainthm:max}.  Finally the height of the surface of the SOS model with a floor is given by essentially the same proof as Theorem~\ref{mainthm:floor-shape}.

 \end{proof}
%

\subsection{Between the Discrete Gaussian and Restricted SOS ($2<p<\infty$)}\label{sec:2<p<inf}
We establish similar results now for $2<p<\infty$.
\begin{theorem}\label{thm:max2pInf}
For $2<p< \infty$ fix $\beta= \beta(p) > 0$ large enough.  There exist $c_1,c_2>0$ so that  for $\eta$ given by the infinite volume $p$-SOS model in $\Z^2$ at inverse-temperature $\beta$,
 \begin{equation}\label{e:tailRate2pInf}
e^{-c_1 \beta h^2} \leq \pi(\eta_0=h) \leq e^{-c_2 \beta h^2}\,.
 \end{equation}
Letting $X_L$ denote the maximum  on an $L\times L$ box with 0 boundary condition, there is a sequence $M=M(L)$ with
 \begin{equation}
   \label{eq:E[XL]2pInf}
   M(L) \asymp\sqrt{\frac{1}{\beta}\log L}
 \end{equation}
such that $X_L \in \{ M,M+1\}$ with probability going to 1 as $L\to\infty$.
Furthermore, there exists some $H=H(L)$ with $H \sim \frac{1+o(1)}{\sqrt{2}} M(L)$ such that w.h.p.\
\begin{equation}\label{eq-height-conc2pInf}
 \#\big\{v : \eta_v \in \{H,H+1\}\big\} \geq (1-\epsilon_\beta) L^2\,,
\end{equation}
where $\epsilon_\beta$ can be made arbitrarily small as $\beta$ increases.
 \end{theorem}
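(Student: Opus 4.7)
My plan is to follow the blueprint used for $1<p<2$ in Section~\ref{sec:1<p<2} and for the DG model in Sections~\ref{sec:ldev}--\ref{sec:floor}: first derive the three infinite-volume large-deviation inputs (the tail rate~\eqref{e:tailRate2pInf}, the consecutive-height ratio $\pi(\eta_0 = h)/\pi(\eta_0 = h-1)$, and the two-point decoupling $\pi(\eta_z = h \mid \eta_0 = h)$), and then feed them into the level-line framework of Section~\ref{sec:floor} essentially unchanged. The novel feature for $p > 2$ is that the optimal shape of an ``$\eta_0 = h$'' large deviation is now a discrete pyramid of unit gradients, since the strictly convex penalty $m^p$ for gradients $m \geq 2$ makes any nontrivial drop overwhelmingly cheaper when carried out in unit steps.

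For~\eqref{e:tailRate2pInf}, the lower bound on $\pi(\eta_0 = h)$ is by direct construction: the pyramid $\tilde\eta_x = (h - \|x\|_\infty)_+$ satisfies $\tilde\eta_0 = h$, has all gradients in $\{0,1\}$, and has exactly $4h^2 + O(h)$ nonzero gradients each contributing $1$ to $\cH_p$, yielding $\pi(\eta_0 = h) \geq e^{-(4+o(1))\beta h^2}$. For the matching upper bound I would show $\cH_p(\eta) \geq c h^2$ for every integer $\eta$ with $\eta_0 = h$. Writing $M = \max_e |\nabla\eta_e|$: if some edge carries gradient $M \geq h$ then $M^p \geq h^p \geq h^2$ from that edge alone; otherwise every path from $0$ to a site with $\eta = 0$ has length $\geq h/M$, so for each $k$ the level set $L_k = \{\eta \geq k\}$ contains the ball of radius $(h-k)/M$, and the discrete isoperimetric inequality $|\partial^e L_k| \geq 4\sqrt{|L_k|}$ combined with the integer-gradient bound $m^p \geq m$ gives $\cH_p \geq \sum_k |\partial^e L_k| \geq ch^2/M$. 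Interpolating this against $\cH_p \geq M^p$ (optimizing over $M$), supplemented with the H\"older estimate $\cH_p \geq TV^p/N^{p-1}$ where $N$ is the number of nonzero-gradient edges, yields $\cH_p \geq ch^2$ uniformly.

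The consecutive-height ratio $\pi(\eta_0 = h)/\pi(\eta_0 = h-1) \leq e^{-c\beta h}$ and the two-point bound $\pi(\eta_z = h \mid \eta_0 = h) \leq e^{-c\beta h}$ for $z$ adjacent to $0$ follow as in Lemmas~\ref{e:consecutiveHeight1p2} and~\ref{l:phh1p2}: the Peierls map $T\eta = \eta - \ind_{0}$ peels off one pyramid shell and shifts the Hamiltonian by $\sim 8\beta h$ (the marginal cost between pyramids of heights $h$ and $h-1$); conditioning on the four neighbours of $z$ together with the ratio estimate then gives the two-point bound exactly as in the proof of Lemma~\ref{l:phh1p2}.

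Once the three large-deviation inputs are in place, the maximum concentration~\eqref{eq:E[XL]2pInf} and the floor-height behaviour~\eqref{eq-height-conc2pInf} follow by running Sections~\ref{sec:ldev}--\ref{sec:floor} verbatim: the max-centering $M(L)$ is the largest integer with $\pi(\eta_0 \geq M) \geq L^{-2}\log^5 L$ as in~\eqref{eq-M-def}, giving $M^2 \asymp (\log L)/\beta$ via the $h^2$ rate; the floor-centering $H(L)$ is defined by $\pi(\eta_0 \geq H) \geq C\beta/L$ as in~\eqref{eq-H-def}, giving $H^2 \sim M^2/2$ and hence $H \sim M/\sqrt{2}$. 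I expect the principal obstacle to be the cost lower bound $\cH_p(\eta) \geq ch^2$ in the first step, since the real-valued $p$-Dirichlet infimum on $\Z^2$ vanishes for $p > 2$ (realized by scaled low-slope real-valued pyramids), so the integer constraint must be leveraged throughout, and the case analysis on $M$ must be arranged to handle the intermediate regime where neither a single large-gradient edge nor the pure perimeter-sum bound suffices alone. A secondary subtlety is that $H \sim M/\sqrt{2}$ requires an asymptotically sharp leading coefficient $\pi(\eta_0 \geq h) = e^{-(c(p,\beta) + o(1))\beta h^2}$, which can be extracted from a subadditivity argument on the pyramid structure applied to successive scales.
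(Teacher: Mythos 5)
Your overall architecture — establish the three infinite-volume large-deviation inputs (tail rate, consecutive-height ratio, two-point decoupling) and feed them into the Section~\ref{sec:floor} machinery — is exactly the paper's plan, and your pyramid lower bound, your ratio estimate by peeling off the outermost contour, and your two-point bound all line up with Lemmas~\ref{l:pGeqPH} and~\ref{l:phh2pInf}. You also correctly flag the crux: since the real-valued $p$-Dirichlet infimum on $\Z^2$ degenerates for $p>2$, the integer constraint is doing all the work in the energy lower bound.

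However, the interpolation scheme you propose for $\cH_p(\eta)\geq ch^2$ does not close. From the level-set isoperimetry plus $m^p\geq m$ you get $\cH_p\geq \sum_k|\partial^e L_k|=:TV\geq ch^2/M$, and from a single edge $\cH_p\geq M^p$; taking the worst $M$ gives only $\cH_p\geq c\,h^{2p/(p+1)}$, which has exponent strictly below $2$ for every finite $p$ (e.g.\ $h^{3/2}$ at $p=3$). To see that this bound really is too weak, take the $\|\cdot\|_\infty$-pyramid of slope $M=\sqrt h$: then $\cH_p=4h^2M^{p-2}=4h^{1+p/2}\geq h^2$, yet your two bounds certify only $\max(h^{p/2},4h^{3/2})$, which for $2<p<3$ is $\asymp h^{3/2}\ll h^2$. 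The H\"older supplement $\cH_p\geq TV^p/N^{p-1}$ cannot rescue this without a quantitative upper bound on $N$, the number of non-zero-gradient edges; you give none, and the only a priori bound $N\leq TV$ collapses H\"older back to $\cH_p\geq TV$. This is precisely the ``intermediate regime'' that you acknowledge but leave open. The paper's Claim~\ref{cl:nestedEnergy} handles it with a dyadic multi-scale argument on the nested contours: tracking the radius $r_k$ of the $(1-2^{-k})h$-th contour, applying Jensen to the contours at scale $k$ (total length $\gtrsim 2^{-k}h\,r_{k+1}$ packed into $B_{r_k}$) to get $E\gtrsim h^p r_1^{2-p}$, and pitting that against $E\geq\frac12 h r_1$; the two balance at $r_1\asymp h$ to give $E\geq ch^2$. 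The exponent $\frac{p}{p-2}$ appearing in that calculation shows that $p>2$ is being used essentially, and there is no short-cut via a single-scale estimate.

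On a secondary point, your observation that $H\sim M/\sqrt2$ genuinely requires a sharp leading constant in $\log\pi(\eta_0\geq h)\sim -c(p,\beta)h^2$ — rather than the two-sided $\asymp$ that Lemma~\ref{l:pGeqPH} delivers — is correct, and is in fact glossed over in the paper's own write-up; your subadditivity suggestion is a sensible way to supply it, though one should check the lower-order terms carefully since the $H$-versus-$M$ comparison sits exactly at the $o(1)$ level.
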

 \begin{proof}
Let $\gamma_1,\ldots,\gamma_h$ be a collection of nested contours containing the origin and let $\Delta_e$ denote the number of $\gamma_i$ that the dual edge $e$ is contained in. Let $E(\{\gamma_i\}) = \sum_e \Delta_e^p$.
\begin{claim}\label{cl:nestedEnergy}
For all $p>2$, there exists $c(p)>0$ such that for all collections of nested clusters $\gamma_1,\ldots,\gamma_h$ containing the origin,
\begin{equation}\label{pGeq2EnergyLowerBound}
E(\{\gamma_i\}) \geq c h^2\,.
\end{equation}
Moreover, for all $c'$ there exists $\epsilon(c',p)>0$ such that if $\gamma_{h/2} \subset B_{\epsilon h}$ then $E(\{\gamma_i\}) \geq c' h^2$.
\end{claim}
\begin{proof}[Proof of the claim]
Let $r_k$ be the maximal distance of $\gamma_{h(1-2^{-k})}$ from the origin.  As $E(\{\gamma_i\}) \geq \sum |\gamma_i|$ it follows that
\begin{equation}\label{e:sumOfOuterContours}
E(\{\gamma_i\}) \geq \frac12 h r_1\,,
\end{equation}
so we may assume that $r_1 = O(h)$.  Let $k_*$ be the $k$ which maximizes $r_k^{2-p} 2^{-kp}$.  Then $\frac{r_{k+1}}{r_k} \geq 2^{\frac{-p}{p-2}}$.  Since $r_1 = O(h)$ it follows that
\[
r_1^{2-p} 2^{-p} > 2^{-\lfloor \log_2 h \rfloor p} \geq r^{2-p}_{\lfloor \log_2 h \rfloor} 2^{-\lfloor \log_2 h \rfloor p}
\]
and hence $k_* <\lfloor \log_2 h \rfloor$.

Note that for all $(1-2^{-k})h\leq i \leq h$ the edges in $\gamma_{i}$ lie inside $B_{r_k}$ and for all $(1-2^{-k})h\leq i \leq (1-2^{-k-1})h$ the contour lengths satisfy $|\gamma_{i}| \geq r_{k+1}$.  Hence we have that
\begin{align*}
E(\{\gamma_i\}) &\geq \sum_{e\in B_{r_k}} \Delta_e^p \geq \max_k \left(\frac{\sum_{i=(1-2^{-k})h}^{(1-2^{-k-1})h} |\gamma_{i}|}{|B_{r_k}|} \right)^p |B_{r_k}|\\
&\geq \max_k \left(\frac{ r_{k+1} 2^{-k-1}h }{4 r_k^2} \right)^p 4 r_{k}^2 = 8^{-p} h^{p} r_k^{2-p} 2^{2-kp} \left(\frac{r_{k+1}}{r_k}\right)^{p} \\
&\geq 8^{-p} 2^{2-\frac{p(p+1)}{p-2}} h^{p} r_{k_*}^{2-p} 2^{-k_*p}
\geq 16^{-p} 2^{2-\frac{p(p+1)}{p-2}} h^{p} r_{1}^{2-p}\,,
\end{align*}
where the second inequality is by Jensen's Inequality.  Combined with Eq.~\eqref{e:sumOfOuterContours} we have
\[
E(\{\gamma_i\}) \geq \max\left\{ \frac12 h r_1, 16^{-p} 2^{2-\frac{p(p+1)}{p-2}} h^{p} r_{1}^{2-p}  \right\}\,.
\]
Taking the infimum of the left hand side over $r_1$ completes the result.
\end{proof}

\begin{lemma}\label{l:pGeqPH}
For each $p,\beta$, there exist constants $c_1,c_2,c_3>0$ such that
\[
e^{-c_1 \beta h^2} \leq \pi(\eta_0=h) \leq e^{-c_2 \beta h^2}
\]
and
\[
\frac{\pi(\eta_0=h)}{\pi(\eta_0=h-1)} \leq e^{-c_3\beta h}\,.
\]
\end{lemma}
\begin{proof}
Similarly to the proof of Corollary~\ref{cor:2} we have that $\pi(\eta_{B_h}=0) \geq e^{-h^2}$.  Then writing $f(x) = (h-|x|_1)\vee 0$,
\[
\pi(\eta_{B_h}=f(x)) =  \pi(\eta_{B_h}=0) e^{-\beta \sum_{j=1}^h (8j+4)} \geq e^{-\beta c_1 h^2}\,.
\]
For contours $\gamma_1,\ldots,\gamma_h$ define
\[
T_{\{\gamma_i\}} (\eta)(y) = \eta_y - \sum_{i=1}^h I(y\in V_{\gamma_i})\,.
\]
Then if $\eta_0=h$ then, $\pi(T_{\{\Gamma_i\}} \eta) \geq e^{\beta E(\{\Gamma_i\})}\pi(\eta)$.  Hence by Claim~\ref{cl:nestedEnergy}
\begin{equation}\label{e:PeierlsBound}
\pi(\eta_0=h)\leq \sum_{\gamma_1,\ldots,\gamma_h}  e^{-\beta E(\{\gamma_i\})} \leq e^{-\tfrac{\beta}{2} c h^2} \sum_{\gamma_1,\ldots,\gamma_h}
e^{-\tfrac{\beta}{2} \sum_i |\gamma_i|}\leq e^{-\beta c_2 h^2}\,.
\end{equation}
Similarly by the second part of the claim, for some $\epsilon>0$
\[
\pi(\eta_0=h, |\Gamma_1|\leq \epsilon h)\leq \sum_{\gamma_1,\ldots,\gamma_h}  e^{-\beta E(\{\gamma_i\})} \leq e^{-2\beta c_1 h^2} \leq e^{-\beta c_1 h^2} \pi(\eta_0=h)\,.
\]
Letting $S_\gamma (\eta)(y) = \eta_y - I(y\in V_{\gamma})$ we have that when $\eta_0\geq 1$, that $\pi(S_{\Gamma_1} \eta) \geq e^{\beta|\Gamma_1|}\pi(\eta)$.  It follows that
\[
\pi(\eta_0=h) \leq 2\pi(\eta_0=h, |\Gamma_1|> \epsilon h) \leq 2 \pi(\eta_0 = h-1)\sum_{\gamma:|\gamma|>\epsilon h} e^{-\beta|\gamma|} \leq e^{-\tfrac{\beta}{2}\epsilon h}\pi(\eta_0 = h-1)\,,
\]
which completes the proof.
\end{proof}

The following lemma is the analogue of equation~\eqref{eq-p(h,h)} in Theorem~\ref{th:p(h)}.
\begin{lemma}\label{l:phh2pInf}
There exists $c(\beta,p)>0$ such that for any $z\in \Z^2$,
\[
\pi(\eta_z= h\mid \eta_0= h) \leq e^{-c\beta h}\,.
\]
\end{lemma}

\begin{proof}
The proof is similar to the proof of equation~\eqref{eq-p(h,h)} in Theorem~\ref{th:p(h)} where we give more detailed explanations.
Fix $z\in \bbZ^2$ and let
\[
X:=\max_{x\sim
  z}\eta_x\,,\, \quad Y(\eta):=\min_{x\sim
  z}\eta_x\,.
\]
Given $0<\delta\le 1$, define the events $F= \{X\le
h\}$ and $E=\{Y\ge h-\delta h^{\frac{1}{p}}\}$. Similarly to before using Lemma~\ref{l:pGeqPH} that $\pi(F^c\mid \eta_0=h)\le  O\left(e^{-c_1 \beta h}\right)$.
Therefore, it will suffice to establish a similar upper bound on $\pi(\eta_z=h\mid \eta_0=h\,,\, F)$.  Conditioning over the values of the
neighbors of $z$ and then using monotonicity yields
\[
\pi(\eta_z=h\mid \eta_0=h\,,\, E^c\,,\, F)\le
e^{-c' \delta^p h}\,.
\]
Finally, we will bound $\pi(E\mid \eta_0=h\,,\, F)$ from above as follows.
On the one hand we have
\[
\pi\left(\eta_z\ge h+1\mid \eta_0=h\,,\, E\,,\, F\right)\ge e^{-4c_2 \beta \delta^p h}\,,
\]
while
\begin{align*}
\pi\left(\eta_z\ge h+1\mid \eta_0=h\,,\, E\,,\,F\right)
&\le
\frac{\pi(\eta_z\ge h+1\mid \eta_0=h)}{\pi\left(E\mid \eta_0=h\,,\, F\right)}
\le \frac{(1+o(1))e^{-c_1\beta h}}{\pi\left(E\mid \eta_0=h\,,\, F\right)}\,,
\end{align*}
Combining the last two displays gives
\[
\pi\left(E\mid \eta_0=h\,,\, F\right)\le (1+o(1))e^{- \beta (c_1- 4c_2\delta^p)h}\,,
\]
and the proof is completed by choosing $c_2 \delta^p< c_1/4$.
\end{proof}

The proof of Theorem~\ref{thm:max2pInf} now follows.  Equation~\eqref{e:tailRate2pInf} follows by Lemma~\ref{l:pGeqPH}.  Together with the bounds from Lemma~\ref{l:phh2pInf} the bounds on the size and concentration of the maximum height follow from essentially the same proof as Theorem~\ref{mainthm:max}.  Finally the height of the surface of the SOS model with a floor is given by essentially the same proof as Theorem~\ref{mainthm:floor-shape}.
 \end{proof}

\subsection{Restricted SOS ($p=\infty$)}\label{sec:RSOS}
Our final result in this section is for the RSOS model, where we recall that any admissible $\eta$ satisfies
$|\eta_x-\eta_y|\in\{0,\pm1\}$ for all $x\sim y$.

\begin{theorem}\label{thm:maxRes}
Fix $\beta>0$ large enough.  There exists $C>0$ such that  for $\eta$ given by the infinite volume restricted-SOS model in $\Z^2$ at inverse-temperature $\beta$,
 \begin{equation}\label{e:tailRateRes}
e^{-4\left(\beta + 2\log\frac{27}{16} + C e^{-\beta}\right) h^2} \leq \pi(\eta_0 =h) \leq e^{-4\left(\beta + 2\log\frac{27}{16} - C e^{-\beta}\right) h^2}\,.
 \end{equation}
With $X_L$ denoting the maximum  on an $L\times L$ box with 0 boundary condition there is a sequence $M=M(L)$ with
 \begin{equation}
   \label{eq:E[XL]Res}
   M(L) \sim (1\pm\epsilon_\beta)  \sqrt{\frac2{4(\beta + 2\log\frac{27}{16})}\log L}
 \end{equation}
such that $X_L \in \{ M,M+1\}$ with probability going to 1 as $L\to\infty$.
Furthermore, there exists some $H=H(L)$ with $H \sim \frac{1+o(1)}{\sqrt{2}} M(L)$ such that w.h.p.\
\begin{equation}\label{eq-height-concRes}
 \#\big\{v : \eta_v \in \{H,H+1\}\big\} \geq (1-\epsilon_\beta) L^2\,,
\end{equation}
where $\epsilon_\beta$ can be made arbitrarily small as $\beta$ increases.
 \end{theorem}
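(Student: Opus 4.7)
The plan is to follow the template of Section~\ref{sec:2<p<inf} by establishing the RSOS analogues of~\eqref{eq-ratio-p(h)/p(h-1)}, \eqref{eq-p(h)} and~\eqref{eq-p(h,h)} for the infinite-volume RSOS measure $\pi$, after which the claims on $X_L$ and on the height of the surface above a floor will follow by direct transcription of the proofs of Theorems~\ref{mainthm:max} and~\ref{mainthm:floor-shape}. The novel input beyond Theorem~\ref{thm:max2pInf} is the sharp constant $2\log\tfrac{27}{16}$ appearing in the large-deviation exponent~\eqref{e:tailRateRes}, which will come from the asymptotic enumeration of alternating sign matrices (Bleher--Fokin, following Zeilberger and Kuperberg), via the bijection between optimal-energy RSOS surfaces, systems of edge-disjoint lattice paths, and ASMs depicted in Fig.~\ref{fig:rsos1}.

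The first step is a geometric reduction: the RSOS rigidity $|\eta_x-\eta_y|\in\{0,\pm 1\}$ forces every admissible configuration with $\eta_0=h$ to carry a unique system of $h$ nested $i$-contours $\gamma_1\supset\cdots\supset\gamma_h$, with $V_{\gamma_i}$ containing the $\ell^1$-ball of radius $h-i$ around the origin. The Hamiltonian then decomposes as $\cH(\eta)=\sum_i|\gamma_i|$, and the isoperimetric bound on diamond regions yields $|\gamma_i|\geq 8(h-i)+4$, so $\cH(\eta)\geq 4h^2$, with equality achieved by the canonical $\ell^1$-pyramid together with many ASM-type deformations. The crucial second step is the sharp enumeration of these \emph{optimal} configurations (those with $\cH(\eta)=4h^2$): via the six-vertex representation of consecutive height differences, such configurations are in bijection with ASMs of effective size $n\asymp 2h$ (up to boundary adjustments), and the Bleher--Fokin asymptotics $A_n\sim c\,(3\sqrt 3/4)^{n^2}n^{-5/36}$ translate, after tracking the constants, into the announced $2\log(27/16)$ term. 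The subleading $Ce^{-\beta}h^2$ corrections come from (i) near-optimal configurations with energy $4h^2+k$ for small $k$, each contributing a factor $e^{-\beta k}$ times a combinatorial weight of order $C^k$, and (ii) the ratio of the pyramid-restricted to the full RSOS partition function, handled using the exponential cluster expansion of~\cite{BW}.

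With~\eqref{e:tailRateRes} in hand, the remaining estimates are immediate: the ratio bound $\pi(\eta_0=h)/\pi(\eta_0=h-1)\leq e^{-c\beta h}$ follows from the $(2h-1)$-increment of the rate function between heights $h-1$ and $h$, and the two-point bound $\pi(\eta_z=h\mid\eta_0=h)\leq e^{-c\beta h}$ follows by the same event-splitting argument used in the proofs of Lemmas~\ref{l:phh1p2} and~\ref{l:phh2pInf}, adapted to the RSOS constraint (for $z$ close to the origin the rigidity already gives the bound, while for $z$ far from the origin the ratio estimate takes over). Feeding these three estimates into the proofs of Theorems~\ref{mainthm:max} and~\ref{mainthm:floor-shape} as black boxes then yields the concentration of $X_L$ on $\{M,M+1\}$ and the two-level plateau structure with $H\sim M/\sqrt 2$.

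The main obstacle is the sharp enumeration in Step~2: matching the diamond-shaped RSOS optimal surfaces to ASMs with standard square domain-wall boundary conditions requires a careful identification of the effective matrix size $n(h)$ and verification that boundary contributions are of lower order than $h^2$; simultaneously, controlling the Peierls enumeration over sub-optimal nested contour systems (total length $>4h^2$) requires the cluster-expansion argument for large $\beta$ to absorb the combinatorial count into the announced $Ce^{-\beta}h^2$ correction.
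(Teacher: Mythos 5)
Your high-level outline matches the paper's: reduce to the system of $h$ nested contours forced by the RSOS rigidity constraint, observe that the minimal Hamiltonian is $4h^2$, count the degeneracy via the six-vertex/ASM correspondence, absorb sub-optimal contributions into an $e^{Ce^{-\beta}h^2}$ correction, and then transfer everything else from the proofs of Theorems~\ref{mainthm:max} and~\ref{mainthm:floor-shape}. That said, there are two genuine gaps.

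First, the reduction to the ASM enumeration is not a matter of ``sharp enumeration of optimal configurations plus near-optimal corrections with combinatorial weight of order $C^k$''. The paper works per quadrant: each contour $\gamma_i$ is split at its crossing points of the axes, and each quadrant piece is mapped by a \emph{path-straightening} map $\psi$ to a monotone (down-right) lattice path. Claim~\ref{cl:pathStraightening} controls the weighted fiber of $\psi$ over each monotone path, and Claim~\ref{cl:tightPathPacking} compares arbitrary crossing points $\underline a,\underline b$ to the minimal $\underline u=(h-1,\dots,0)$. Only after this reduction does one land on a weighted count of edge-disjoint non-crossing monotone paths, which is in bijection with the six-vertex model with domain-wall boundary conditions, hence with $h\times h$ ASMs, once \emph{per quadrant}. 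Your count of ``near-optimal configurations with energy $4h^2+k$, each with combinatorial weight $C^k$'' does not reproduce this: the number of deformations with excess energy $k$ is of order $\binom{\Theta(h^2)}{k}C^k$ rather than $C^k$, and summing $e^{-\beta k}C^k$ over $k$ gives an $O(1)$ prefactor, not the needed $e^{Ce^{-\beta}h^2}$; the structured straightening argument is what makes the correction clean. Relatedly, your ``ASMs of effective size $n\asymp 2h$'' does not match the paper's factorization into four independent $h\times h$ quadrant diamonds (it happens to give the same leading exponent $4h^2\log\frac{3\sqrt3}{4}=2h^2\log\frac{27}{16}$, but you would still need to justify a single $2h\times2h$ DWBC correspondence, which is not standard geometry).

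Second, the ratio bound $\pi(\eta_0=h)/\pi(\eta_0=h-1)\le e^{-c\beta h}$ does \emph{not} follow from the $(2h-1)$-increment of the rate function, as you claim. The one-point bound~\eqref{e:tailRateRes} carries an error term of order $e^{\pm Ce^{-\beta}h^2}$ in the exponent, which swamps the $O(\beta h)$ scale of the ratio: differencing the bounds yields $e^{-\Theta(\beta h)+O(e^{-\beta}h^2)}$, which is useless once $h\gg\beta e^{\beta}$. The paper instead proves the ratio bound (Lemma~\ref{lem:4.8}, via the same argument as Lemma~\ref{l:pGeqPH}) by a direct Peierls map on the outermost $1$-contour, which is independent of the sharp LD constant. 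For the same reason the two-point bound cannot be dispatched as quickly as you suggest; the far case $|z|>\epsilon h$ requires the chain-of-contours argument and is not a routine transcription of Lemma~\ref{l:phh2pInf}.
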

 \begin{proof} As in the previous cases the proof boils down to
   control the large deviations of the surface at one or two
   vertices. For the one vertex  large deviation
   \eqref{e:tailRateRes} we first need to control the contribution to the
   partition function of nested contours around the origin.
  \subsubsection{ The partition function of nested circuits and the six-vertex model}
Let $\cN_0$ be the set of collections of $h$ nested self-avoiding circuits $\{\cC_1,\ldots,\cC_h\}$
on the dual lattice ${\Z^2}^*$, ordered from the outermost one to the
innermost one, which do not overlap and encircle the
origin. We then define the
associated partition function by
\[
\Upsilon=\sum_{\{\cC_1,\ldots,\cC_h\}\in \cN_0}e^{-\beta\sum_{i=1}^h |\cC_i|}\,.
\]
Each contour must cross each of the positive and negative axes at least
once.  Let $a_i+1/2$ and $b_i+1/2$ denote the minimal crossing points
of $\cC_i$ of the positive $x$ and $y$ axes respectively and let $\underline{a}=(a_1,\ldots,a_h), \ \underline{b}=(b_1,\ldots,b_h)$. Note that the $a_1 > a_2> \ldots > a_h$ and similarly $b_1 >
\ldots > b_h$. By definition, for each $i=1,\dots,h$ $\cC_i$ connects $(a_i+1/2,1/2)$ to $(1/2,b_i+1/2)$
without crossing the positive $x$-axis to the left of $a_i$ or the positive
$y$-axis below $b_i$.
Therefore
\begin{equation}
  \label{eq:temporale}
\Upsilon \leq e^{-4\beta h} \bigg(\sum_{\underline{a},\underline{b}} \hat{\Upsilon}_{\underline{a},\underline{b}}  \bigg)^4\,,
\end{equation}

where
\[
\hat{\Upsilon}_{\underline{a},\underline{b}} := \sum_{
  \gamma_1,\ldots,\gamma_h} e^{-\beta\sum_{i=1}^h |\gamma_i|}
\]
and the sum is over collections of $h$ dual paths which do not cross or
share common edges and such that $\gamma_i$ connects $(1/2,b_i+1/2)$ to
$(a_i+1/2,1/2)$ without crossing the positive $x$-axis to the left of $a_i$ or the positive
$y$-axis below $b_i$ (cf. Figure~\ref{fig:rsos_red}). The factor
$e^{-4\beta h}$ comes from the edges of $\cC_1,\dots,\cC_h$ crossing the
axes at the points $a_i+1/2$ or $b_i+1/2$, $i=1,\dots,h$.

In order to estimate $\hat{\Upsilon}_{\underline{a},\underline{b}}$ we
associate to each path $\gamma_i$  a \emph{down-right} path
$\psi(\gamma_i)$, {\it i.e.}
a path satisfying  the same
constraints as $\gamma_i$ and which in addition only makes steps down or
right from $(1/2,b_i+1/2) $ to $(a_i +1/2,1/2)$ (cf. Figure~\ref{fig:rsos_red}).
For this purpose, for each $0\leq x < a_i$ we define
\begin{align*}
m_{x}(\gamma_i)&:= \min\{k \geq 0: ((x+1/2,k+1/2),(x+3/2,k+1/2))\in \gamma_i\},\\
m^*_{x}(\gamma_i)&:= \min\{m_{x'}(\gamma_i): 1\leq x'\leq x\}\,.
\end{align*}
Then
$\psi(\gamma_i)$ is defined as the path from $(1/2,b_i+1/2) $ to $(a_i+1/2,1/2)$ consisting of
\begin{compactitem}[$\bullet$]
\item the horizontal edges $((x+1/2,m^*_{x}(\gamma_i)+1/2),(x+3/2,m^*_{x}(\gamma_i)+1/2))$ for $0\leq x \leq a_i-1$, and
\item the vertical edges in the direct paths from $(x+1/2,m^*_{x-1}(\gamma_i)+1/2))$ to $(x+1/2,m^*_{x}(\gamma_i)+1/2))$ where $m^*_{-1}(\gamma_i)=b_i$.
  \end{compactitem}

{
\begin{figure}[htp]
\setlength{\abovecaptionskip}{-5pt}
  \centering
  \begin{tikzpicture}[font=\footnotesize]

    \node (plot1) at (0,0) {
      \includegraphics[scale=.7]{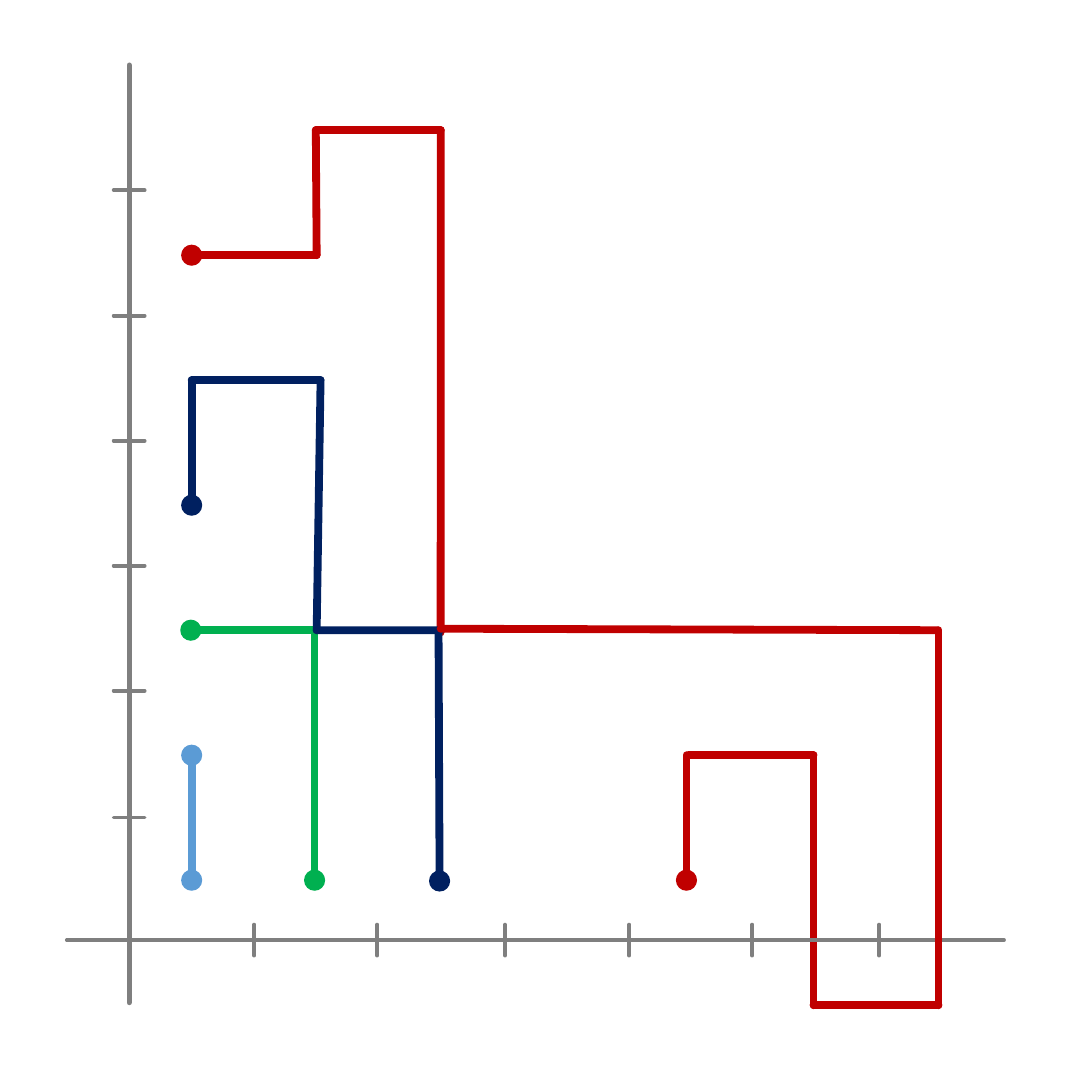}};
    \node (plot2) at (7cm,0) {
      \includegraphics[scale=.7]{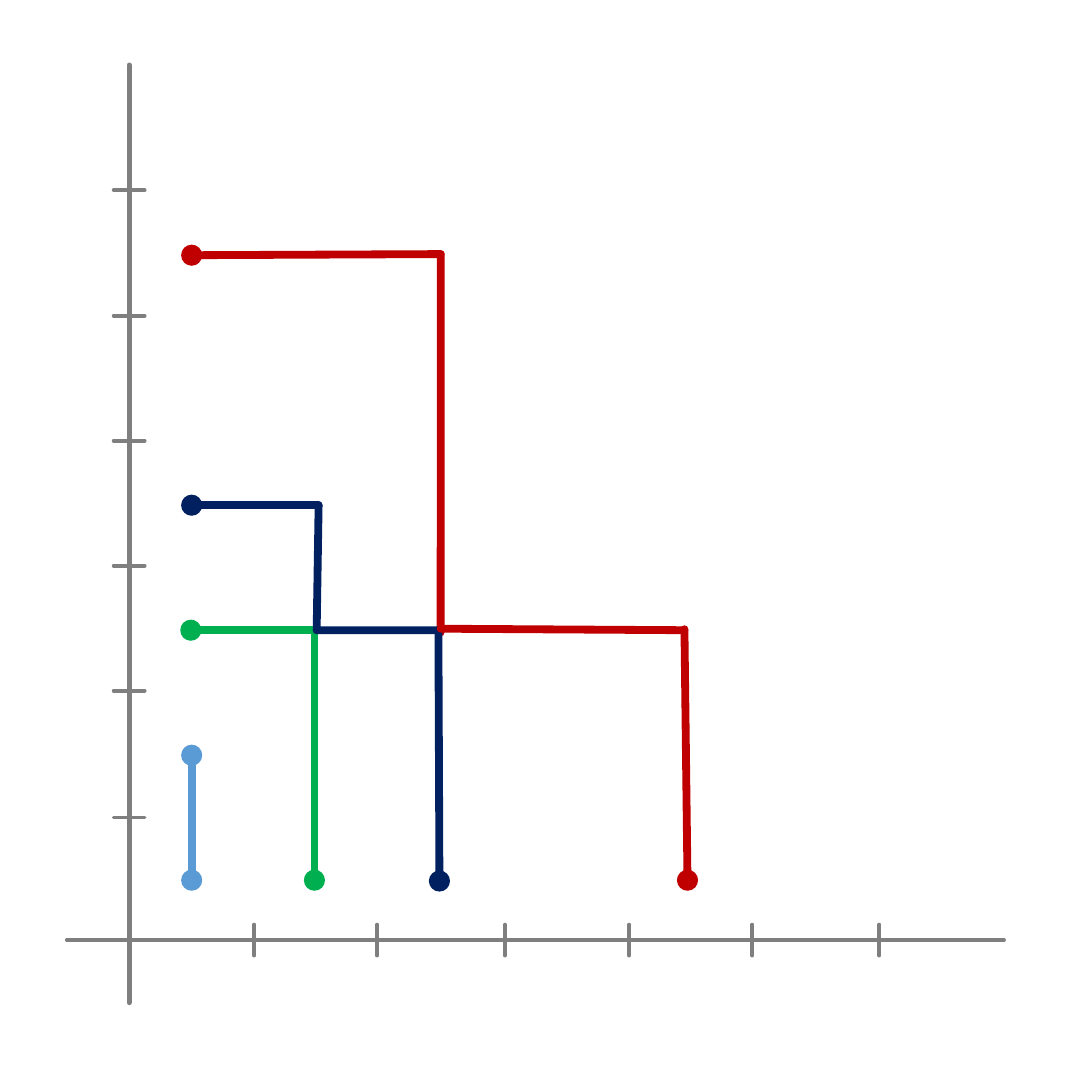}};
    \node (plot3) at (5.25cm,-5cm) {
      \includegraphics[scale=.8]{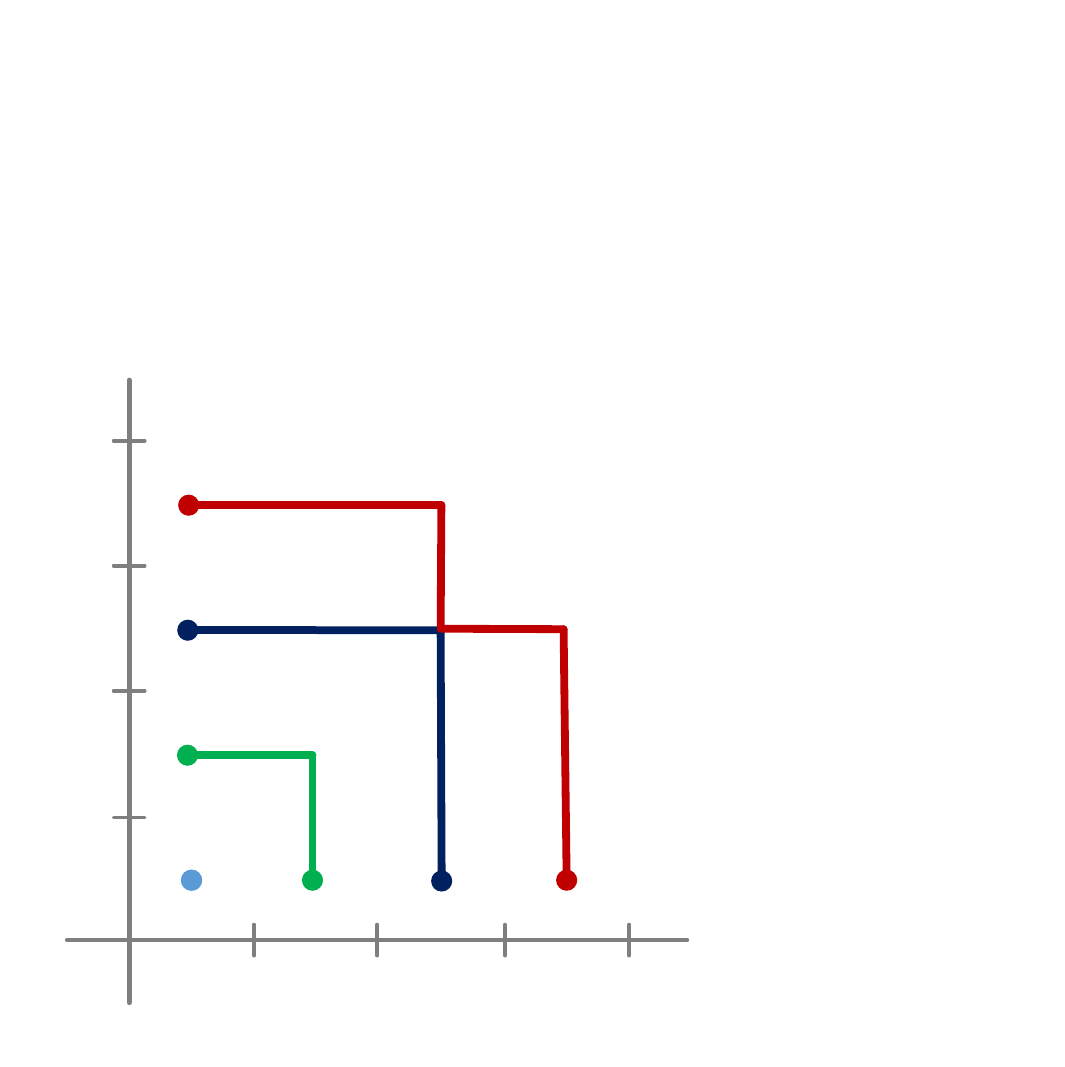}};

    \begin{scope}[shift={(plot1.south west)},x={(plot1.south east)},y={(plot1.north west)}]
      \node[below=-.5em] at (.64,0.16) {$a_1$};
      \node[below=-.5em] at (.42,0.16) {$a_2$};
      \node[below=-.5em] at (.31,0.16) {$a_3$};
      \node[below=-.5em] at (.2,0.16) {$a_4$};
      \node[left=-.7em] at (0.162,.745) {$b_1$};
      \node[left=-.7em] at (0.162,.525) {$b_2$};
      \node[left=-.7em] at (0.162,.415) {$b_3$};
      \node[left=-.7em] at (0.162,.305) {$b_4$};
      \node[above] at (.46,.7) {\color[RGB]{212,84,84}$\gamma_1$};
      \node[above] at (.34,.53) {\color[RGB]{0,31,95}$\gamma_2$};
      \node[above] at (.34,.28) {\color[RGB]{0,175,80}$\gamma_3$};
      \node[above] at (.23,.21) {\color[RGB]{91,155,212}$\gamma_4$};
    \end{scope}

   \begin{scope}[shift={(plot2.south west)},x={(plot2.south east)},y={(plot2.north west)}]
      \node[below=-.5em] at (.64,0.16) {$a_1$};
      \node[below=-.5em] at (.42,0.16) {$a_2$};
      \node[below=-.5em] at (.31,0.16) {$a_3$};
      \node[below=-.5em] at (.2,0.16) {$a_4$};
      \node[left=-.7em] at (0.162,.745) {$b_1$};
      \node[left=-.7em] at (0.162,.525) {$b_2$};
      \node[left=-.7em] at (0.162,.415) {$b_3$};
      \node[left=-.7em] at (0.162,.305) {$b_4$};
      \node[above] at (.48,.7) {\color[RGB]{212,84,84}$\psi(\gamma_1)$};
      \node[above] at (.34,.52) {\color[RGB]{0,31,95}$\psi(\gamma_2)$};
      \node[above] at (.36,.28) {\color[RGB]{0,175,80}$\psi(\gamma_3)$};
      \node[above] at (.25,.21) {\color[RGB]{91,155,212}$\psi(\gamma_4)$};
    \end{scope}

    \begin{scope}[shift={(plot3.south west)},x={(plot3.south east)},y={(plot3.north west)}]
    \node[below=-.5em] at (.53,0.16) {$u_1$};
      \node[below=-.5em] at (.42,0.16) {$u_2$};
      \node[below=-.5em] at (.31,0.16) {$u_3$};
      \node[below=-.5em] at (.2,0.16) {$u_4$};
      \node[left=-.7em] at (0.162,.525) {$u_1$};
      \node[left=-.7em] at (0.162,.415) {$u_2$};
      \node[left=-.7em] at (0.162,.3) {$u_3$};
      \node[left=-.7em] at (0.162,.19) {$u_4$};
      \node[above] at (.63,.3) {\color[RGB]{212,84,84}$\chi_1(\psi(\gamma_1))$};
      \node[above] at (.3,.41) {\color[RGB]{0,31,95}$\chi_2(\psi(\gamma_2))$};
      \node[above] at (.3,.295) {\color[RGB]{0,175,80}$\chi_3(\psi(\gamma_3))$};
      \node[above] at (.22,.195) {\color[RGB]{91,155,212}$\chi_4(\psi(\gamma_4))$};
    \end{scope}

  \end{tikzpicture}
  \caption{The upper left frame displays the paths $\gamma_i$
  contributing to the partition function
    $\hat{\Upsilon}_{\underline{a},\underline{b}}$.  These are transformed into the down-right paths $\psi(\gamma_i)$ with
    partition function $\tilde{\Upsilon}_{\underline{a},\underline{b}}$ in the upper right frame.  The bottom frame denotes $\chi_i(\gamma_i)$ where the endpoints are shifted to $u_i$.}
  \label{fig:rsos_red}
\end{figure}
}

\begin{claim}\label{cl:pathStraightening}
There exists an absolute constant $C>0$ such that, for all large enough $\beta$, all integers
$a,b$ and all down-right paths $\gamma^*$ from $u=(1/2,b+1/2)$ to $v=(a+1/2,1/2)$,
\[
\sum_{\gamma:\, \psi(\gamma)=\gamma^*} e^{-\beta |\gamma|} \leq e^{-(\beta-C e^{-\beta})(a+b)}\,.
\]
where the sum is over all paths connecting $u$ to
$v$ which do not cross the positive $x$-axis to the left of
$a$ or the positive $y$-axis below $b$.
\end{claim}
\begin{proof}
Let $
W_{z,z'}:=\sum_{\gamma} e^{-\beta|\gamma|}
$
where the sum is over all paths (not necessarily down-right) from $z=(z_1,z_2)$ to $z'=(z_1',z_2')$.  By standard estimates (see, e.g.,~\cite{DKS}) this can be bounded by
\begin{equation}\label{e:contourWeightedSum}
W_{z,z'} \leq e^{-(\beta-5e^{-\beta})\min\{|z_1-z_1'|,|z_2-z_2'|\}}\,.
\end{equation}
Given a down-right path $\gamma^*$ as in the claim, let
$0=x_0<x_1<x_2<\ldots<x_s<a$ denote the points where $m^*_{x}(\gamma^*) <
m^*_{x-1}(\gamma^*)$ (i.e. where the height of the path decreases). 

Let now $\gamma$ be any path connecting $u$ to $v$ which do not cross the positive $x$-axis to the left of
$a$ or the positive $y$-axis below $b$ such that $\psi(\gamma)=\gamma^*$. We claim that each path $\gamma$ must pass through each vertex $z_j:=(x_j+1/2,m^*_{x_j}(\gamma^*)+1/2))$ in order of $j$.  By construction the edges $e_j=(z_j,z_j+(1,0))$ must all be present in the path $\gamma$ since these represent new record low horizontal edges for the path moving from left to right.  To see that they appear in order take $0\leq j<j' < s$.  Suppose that in the direction from $u$ to $v$ the path first reaches the edge $e_{j'}$ before $e_j$.  The path from $u$ to $e_{j'}$ must then by definition pass above $e_j$.  It must then continue onto $e_j$.  However, it is then geometrically impossible to reach $v$ without passing below $e_{j'}$, crossing itself or crossing the positive $x$-axis to the left of
$a$ or the positive $y$-axis below $b$.  This gives a contradiction and thus it must cross the $e_j$ in order.

We, therefore, may split the path into segments $\gamma_j$ from $z_j$
to $z_{j+1}$. Defining $z_j':= (x_{j+1}+1/2,m^*_{x_{j}}(\gamma^*)+1/2)$ we have that $\gamma_j$ must pass through or above $z_j'$, that is that for some $\ell_j \geq 0$, $z_j'+\ell_j(0,1)\in\gamma_j$.  If this were not the case there would have to be a horizontal edge $((x_{j+1}-1/2,r),(x_{j+1}+1/2,r))$ for some $r < m^*_{x_{j}}(\gamma^*)+1/2$ and so $m^*_{x_{j+1}-1}(\gamma^*) < m^*_{x_{j}}(\gamma^*)$ which contradicts the definition of $x_{j+1}$.  For concreteness we take $\ell_j$ to correspond to the first vertex on or above $z_j'$ on $\gamma_j$.

Summing over the possible segments $\gamma_j$ which satisfy the aforementioned conditions we have that
\begin{align*}
\sum_{\gamma_j}  e^{-\beta|\gamma|} &\leq  \sum_{\ell_j} W_{z_j,z_j'+\ell_j(0,1)} W_{z_j'+\ell_j(0,1),z_{j+1}} \\
&\leq \sum_{\ell_j} e^{-(\beta-5e^{-\beta})(x_{j+1}-x_j+m^*_{x_{j}}(\gamma^*)-m^*_{x_{j+1}}(\gamma^*)+\ell_j)} \\
&\leq  \left(\frac1{1 - e^{-(\beta-5e^{-\beta})}}\right) e^{-(\beta-5e^{-\beta})(x_{j+1}-x_j+m^*_{x_{j}}(\gamma^*)-m^*_{x_{j+1}}(\gamma^*))}\\
&\leq  e^{-(\beta-7e^{-\beta})(x_{j+1}-x_j+m^*_{x_{j}}(\gamma^*)-m^*_{x_{j+1}}(\gamma^*))} \,.
\end{align*}
Combining the segments $\gamma_j$ we have that
\begin{align*}
\sum_{\gamma:\psi(\gamma)=\gamma^*} e^{-\beta |\gamma|} &\leq \prod_j e^{-(\beta-7e^{-\beta})(x_{j+1}-x_j+m^*_{x_{j}}(\gamma^*)-m^*_{x_{j+1}}(\gamma^*)}= e^{-(\beta-7e^{-\beta})(a+b)}\,,
\end{align*}
which completes the proof.
\end{proof}
Let
\[
\tilde{\Upsilon}_{\underline{a},\underline{b}} = \sum_{\gamma_1,\ldots,\gamma_h} e^{-\beta\sum_{i=1}^h |\gamma_i|} = \sum_{\gamma_1,\ldots,\gamma_h} e^{-\beta\sum_{i=1}^h (a_i + b_i)}\,,
\]
where now the sum is over collections of down-right dual paths which do not cross or share common edges such that $\gamma_i$ connects $(1/2,b_i+1/2)$ to $(a_i+1/2,1/2)$.  By  Claim~\ref{cl:pathStraightening} we have that
\begin{equation}\label{eq:ZHatTildeComparison}
\hat{\Upsilon}_{\underline{a},\underline{b}} \leq
e^{Ce^{-\beta}\sum_{i=1}^h (a_i + b_i)}\  \tilde{\Upsilon}_{\underline{a},\underline{b}}\,.
\end{equation}
Let $\underline{u}=(h-1,h-2,\ldots,0)$, the minimal possible value of $\underline{a}$ or $\underline{b}$.
\begin{claim}\label{cl:tightPathPacking}
For all $\underline{a}$ or $\underline{b}$ we have
\[
\tilde{\Upsilon}_{\underline{a},\underline{b}}  \leq
\tilde{\Upsilon}_{\underline{u},\underline{u}}\  e^{C e^{-\beta} h^2 - \sum_i (a_i+b_i-2(h-i))}\,.
\]
\end{claim}
\begin{proof}
For down-right paths $\gamma_1,\dots,\gamma_h$ it is convenient to think of them as the graph of
a function; we will write $\gamma_i(s)$ to denote the maximum height
of the path along the line $x=s+1/2$.  Our conditions on the
$\{\gamma_i\}_{i=1}^h$ in the definition of $\tilde{\Upsilon}$ implies that
$\gamma_i(x)$ is strictly decreasing in $i$.  Define the new
down-right path $\chi_i(\gamma_i)$ by
\[
\chi_i(\gamma_i)(x) = \begin{cases}
\min\{\gamma_i(x),h-i+1/2\} & x \leq a_i\\
0 & x>a_i\,.
\end{cases}
\]
The paths $\chi_1(\gamma_1),\ldots,\chi_h(\gamma_h)$ still do not cross or share a common edge.
We will count the number of down-right paths $\gamma_i$ from $(1/2,b_i+1/2)$ to $(a_i+1/2,1/2)$ which are mapped to a given $\tilde{\gamma}$.  This is the number of paths from $(1/2,b_i+1/2)$ to $(x,h-i+1/2)$ where $x=\min\{x':\gamma^*(x') \leq h-i+1/2\}\wedge (h-i+1/2)$ times the number of paths from $(h-i+1/2,\gamma^*(h-i)\wedge (h-i+1/2))$ to  $(a+1/2,1/2)$.  In particular
\[
\# \{\gamma_i: \chi_i(\gamma_i) = \tilde{\gamma}\} \leq \binom{ b_i }{ h-i}\binom{ a_i }{ h-i}\,.
\]
Now if $s\leq t$ then by maximizing over $s$,
\[
e^{-(\beta -1) s} \binom{ s+t }{ s} \leq \frac{(2e^{-(\beta - 1)}t)^s}{s!} \leq \frac{(2e^{-(\beta-1)}t)^{2e^{-(\beta-1)}t}}{(2e^{-(\beta-1)}t)!} \leq e^{C e^{-\beta}t}
\]
for some absolute constant $C$.  If $s>t$ then
\[
e^{-(\beta - 1) s} \binom{ s+t }{ s} \leq e^{-(\beta - 1) s} 2^{2s} \leq 1\,.
\]
Together this gives us that
\[
\# \{\gamma_i: \chi_i(\gamma_i) = \tilde{\gamma}\}e^{-\beta (a_i+b_i - 2(h-i))} \leq  e^{2C e^{-\beta}(h-i) - (a_i+b_i - 2(h-i))}\,.
\]
Finally by considering the mapping $(\gamma_1,\ldots,\gamma_h) \mapsto (\chi_1(\gamma_1),\ldots,\chi_h(\gamma_h))$ we have that
\[
\tilde{\Upsilon}_{\underline{a},\underline{b}} \leq \tilde{\Upsilon}_{\underline{u},\underline{u}} \prod_{i=1}^h e^{2C e^{-\beta}(h-i) - (a_i+b_i - 2(h-i))}\leq \tilde{\Upsilon}_{\underline{u},\underline{u}} e^{C e^{-\beta} h^2 - \sum_i (a_i+b_i-2(i-1))}\,,
\]
as required.
\end{proof}
We now combine the above claims to establish the following result
\begin{lemma}\label{l:fullContourPartitionBound}
There exists an absolute constant $C>0$ such that the partition function for $h$ nested non-overlapping contours around the origin satisfies
\[
\Upsilon \leq e^{C e^{-\beta} h^2} \tilde{\Upsilon}_{\underline{u},\underline{u}}^4\,.
\]
\end{lemma}
\begin{proof}
If we combine \eqref{eq:temporale}, \eqref{eq:ZHatTildeComparison} and Claim~\ref{cl:tightPathPacking} we have that
\begin{align*}
\Upsilon &\leq e^{-4\beta h} \bigg(\sum_{\underline{a},\underline{b}} e^{Ce^{-\beta}\sum_{i=1}^h (a_i + b_i)} \tilde{\Upsilon}_{\underline{a},\underline{b}} \bigg)^4\\
&\leq e^{-4\beta h} \bigg(\sum_{\underline{a},\underline{b}} e^{C'e^{-\beta}h^2 } e^{- \frac12\sum_i (a_i-(h-i))-\frac12\sum_{i=1}^h ( b_i-(h-i)))} \tilde{\Upsilon}_{\underline{u},\underline{u}}  \bigg)^4\\
&\leq e^{4C'e^{-\beta}h^2 +O(\beta h)} \left(\frac1{1+e^{-1/2}}\right)^{8h} \tilde{\Upsilon}_{\underline{u},\underline{u}}^4 \leq e^{4C'e^{-\beta}h^2 +O(\beta h)} \tilde{\Upsilon}_{\underline{u},\underline{u}}^4\,.\qedhere
\end{align*}
\end{proof}

The asymptotics of $\tilde{\Upsilon}_{\underline{u},\underline{u}}$ as
$h\uparrow \infty$ will follow from a
bijection between configurations of non-overlapping down-right paths
and the \emph{six-vertex model} (together with the bijection between
the latter and ASMs), which was pointed out to us by David B.\ Wilson
and which represents a special case of the isomorphism between the \emph{terrace-ledge-kink} model and the six-vertex model (see, e.g.,~\cite{Abraham}*{pp.43--45 and in particular Figs.~13--14}).
\begin{proposition}\label{p:ASMBound}
We have that asymptotically
\[
\tilde{\Upsilon}_{\underline{u},\underline{u}}e^{-\beta\sum_{i=1}^h
  (h-i)} = \bigg(\frac{3\sqrt{3}}{4}\bigg)^{(1+o(1))h^2}\qquad
\text{as $h\to \infty$}\,.
\]
\end{proposition}
\begin{proof}
Consider a set of edge-disjoint non-crossing SE/NE paths counted by
$\tilde{\Upsilon}_{\underline{u},\underline{u}}$ between $\{ (-i,-i) \leftrightsquigarrow (i,-i) : i=1,\ldots,h\}$
(as was illustrated in Figure~\ref{fig:rsos1} in the introduction), and observe that there are only six possible constellations of existing/missing edges incident to an internal vertex:
Indeed, as shown in Figure~\ref{fig:rsos2}, since paths cannot overlap, upon directing the edges towards SE/NE the in-degree of every internal vertex must equal its out-degree; thus,
such a vertex can have either in-degree 0 (no incident edges appear), or in-degree 1 (whence there are  4 possibilities: 2 choices for an incoming edge
and 2 for an outgoing one), or in-degree 2 (all incident edges appear).

\begin{figure}
\begin{center}
\hspace{-0.5in}
\includegraphics[width=0.4\textwidth]{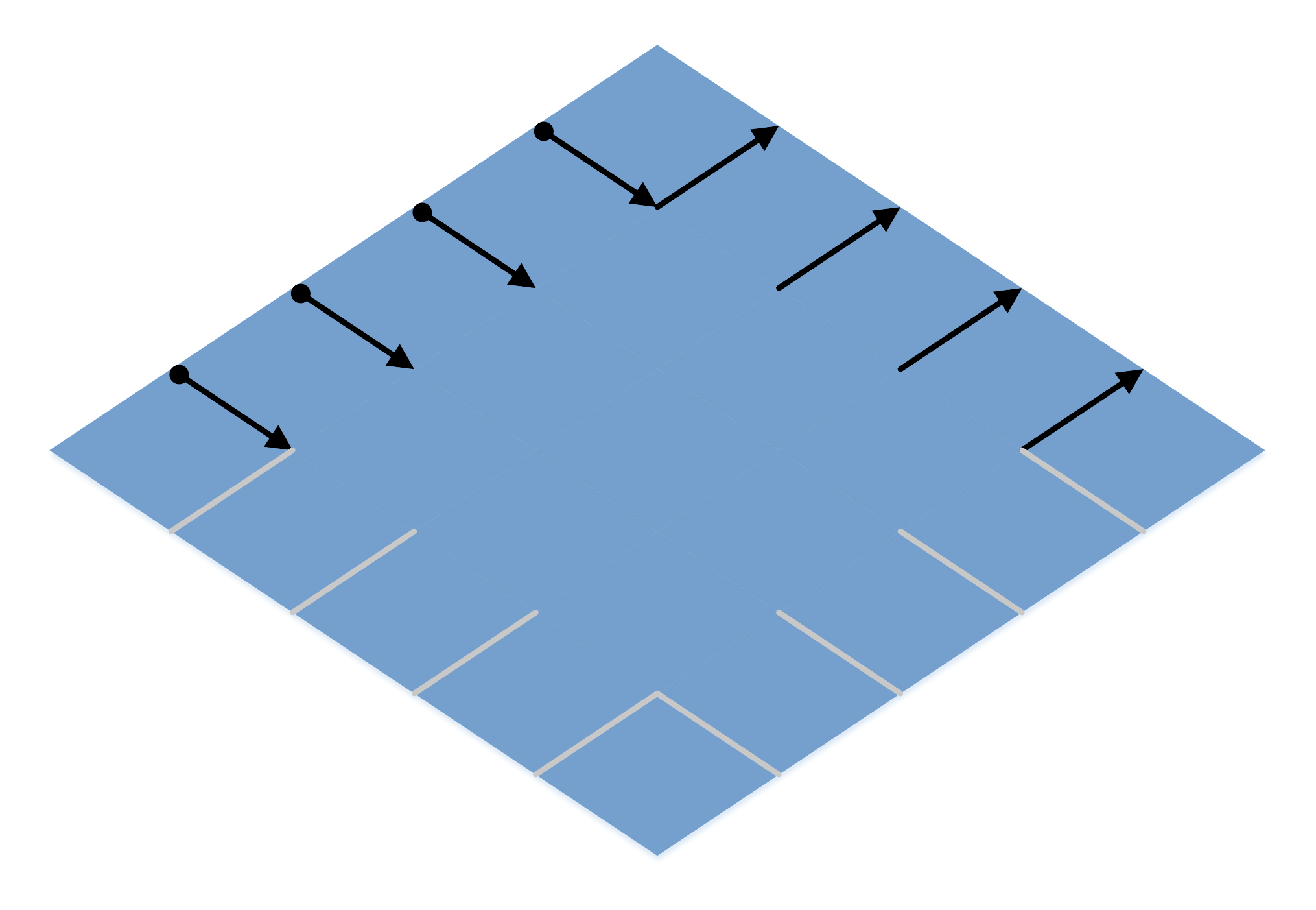}
\hspace{-0.25in}
\raisebox{0.2\height}{\includegraphics[width=0.4\textwidth]{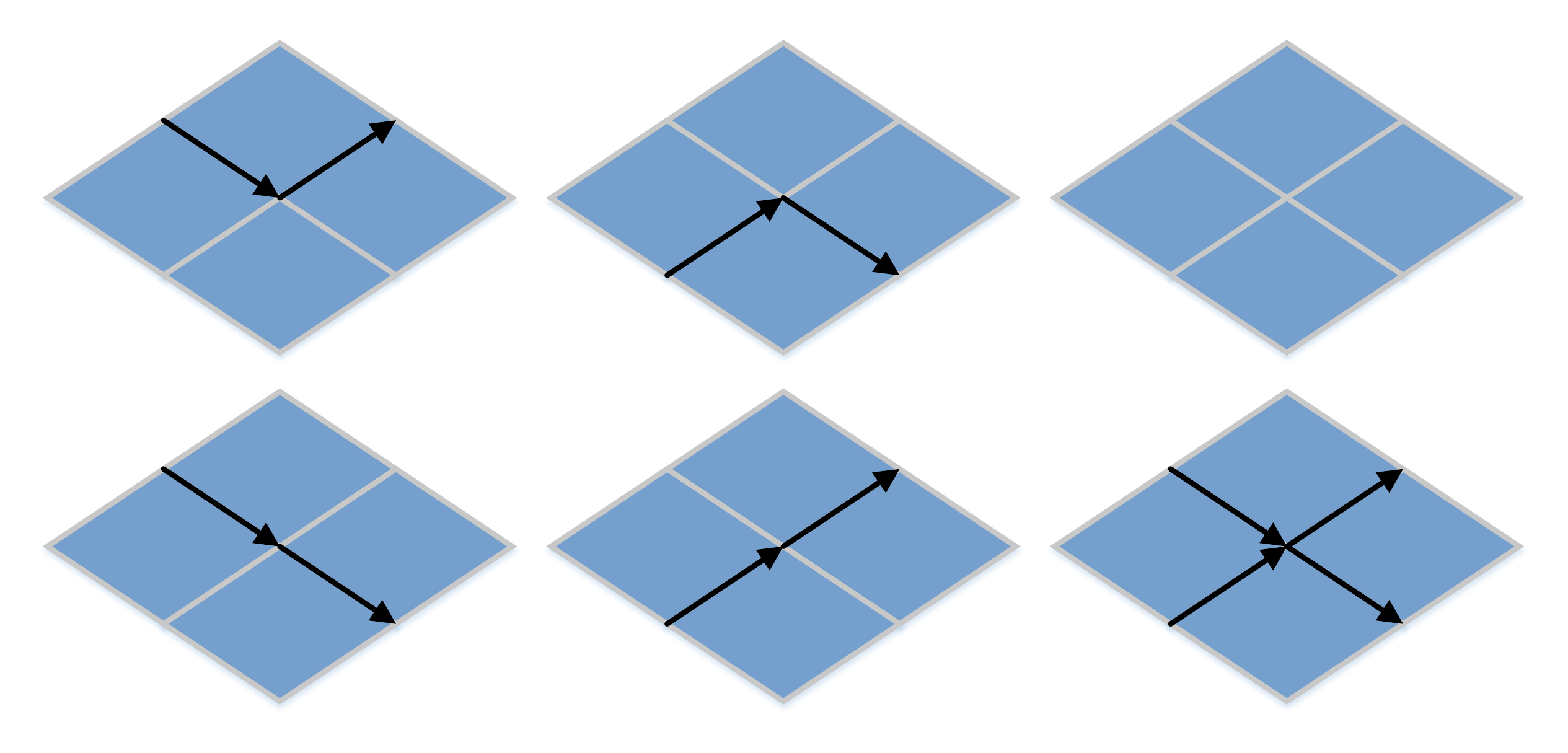}}
\end{center}
\vspace{-0.1in}
\caption{The six-vertex model with domain-wall boundary conditions.}
\label{fig:rsos2}
\end{figure}

The requirement that the paths are to connect $(-i,-i)\leftrightsquigarrow (i,-i)$ for all $i$ can then be embedded in boundary conditions along the $h\times h$ diamond, in the form of always having the $2h$ edges incident to the boundary points along the upper two faces (i.e., $\{ (-i,-i) : i\in[h]\}\cup \{ (i,-i) : i\in[h]\}$) and forbidding the $2h$ edges incident to the remaining boundary points (the lower two faces of the diamond), as in Figure~\ref{fig:rsos2}. This is precisely the six-vertex model with domain-wall boundary conditions, in precise correspondence with the required set of paths.

In the special case of the domain-wall boundary condition, there is a well known correspondence between the six-vertex model and ASMs: one can follow the SE lines of the diamond starting from the second edge (the first is always present as part of the boundary conditions), and construct a $\{0,\pm1\}$-matrix as follows: associating the rows with the SE lines, one reads the row from left to right by processing the line towards SE, registering $1$ if we move from a present edge to a missing one, a $-1$ if we move from a missing edge to a present one, and $0$ otherwise. The boundary conditions guarantee that each row would sum to $1$ (as it begins with a present edge and ends with a missing one). The same conclusion applies to the $\{0,\pm1\}$-matrix that one reads from the configuration by following its SW lines (reading the columns from top to bottom). Finally, by definition of the six-vertex model, these two methods produce the same matrix, which is thereby an ASM.
\end{proof}
\subsubsection{Single vertex large deviation: proof of \eqref{e:tailRateRes}}
If $\eta_0=h$ then there exist nested contours $\Gamma_1,\ldots, \Gamma_h$ surrounding the origin.  By the same Peierls argument as in Eq.~\eqref{e:PeierlsBound},
\[
\pi(\eta_0 =h) \leq \sum_{\{\gamma_1,\ldots,\gamma_h\}\in \cN_0} e^{-\beta|\gamma_i|} = \Upsilon\,.
\]
By Lemma~\ref{l:fullContourPartitionBound} and Proposition~\ref{p:ASMBound} we therefore have that
\[
\pi(\eta_0 =h) \leq e^{-4\left(\beta + 2\log\frac{27}{16} - C e^{-\beta}\right) h^2}\,.
\]
Now let $Q=\{(x,y):|x|\vee |y| \leq h+1\}$.  Then
\[
\pi(\eta_Q=0) \geq e^{-C e^{-\beta}h^2}\,.
\]
Let $\gamma_1,\ldots,\gamma_h$ be a nested collection of non-intersecting contours with the minimum possible lengths, i.e., $|\gamma_i| = 8(h-i)+4$.  The number of such collections is exactly $(\tilde{\Upsilon}_{\underline{u},\underline{u}}e^{-\beta\sum_{i=1}^h (h-i)})^4$.
Let $\xi_\gamma(z) = \#\{i:z\in V_{\gamma_i}\}$ which is constructed to have its contours as $\gamma_i$.  Then
\[
\pi(\eta_Q=\xi_Q) = \pi(\eta_Q=0) e^{-\sum_{i=1}^h |\gamma_i|} \geq e^{-4\beta h^2 - C e^{-\beta}h^2}\,.
\]
By our bound in Proposition~\ref{p:ASMBound} on the number of such contours, we deduce that
\[
\pi(\eta_0 =h) \geq e^{-4\left(\beta + 2\log\frac{27}{16} + C e^{-\beta}\right) h^2}\,,
\]
as required.

We also have the following result proved essentially the same argument as Lemma~\ref{l:pGeqPH}.
\begin{lemma}
\label{lem:4.8}
For each $\beta$, there exists a constant $c>0$ such that,
\[
\frac{\pi(\eta_0=h)}{\pi(\eta_0=h-1)} \leq e^{-c\beta h}\,.
\]
\end{lemma}

\subsubsection{Large deviations at two vertices.}

\begin{lemma}
For all $z \neq 0$ we have that
\[
\pi(\eta_z \geq h \mid \eta_0\geq h) \leq e^{-c'\beta h}\,.
\]
\end{lemma}
\begin{proof}[Proof of Lemma]
Write $\theta=\pi(\eta_z \geq h \mid \eta_0\geq h)$.  If $z=(x,y)$ we
split the proof into two cases.  First suppose that $\max(|x|,|y|)\leq \epsilon h$ where $\epsilon =\frac{c}{100}$ and $c$ is the constant in Lemma~\ref{lem:4.8}.  By the FKG inequality and by symmetry
\begin{equation}\label{e:twoXStep}
\pi(\eta_{(2x,0)}\geq h\mid\eta_0\geq h) \geq \pi(\eta_{(2x,0)}\geq h\mid\eta_0\geq h,\eta_z\geq h)\pi(\eta_{z}\geq h\mid\eta_0\geq h) \geq \theta^2\,.
\end{equation}
We define $W = \{-2x,0,2x\}^2$ and
$U=\{(x',y')\in\Z^2:\max\{|x'|,|y'|\} = 2x\}$.  Then, since each
element of $W$ is $2x$ offset from another element of $W$, by applying
Eq.~\eqref{e:twoXStep}, the symmetry of the model and the FKG inequality we have that
\[
\pi(\min_{w\in W} \eta_w \geq h \mid \eta_0 \geq h) \geq \theta^{16}\,.
\]
Since the step size of the restricted SOS surface is at most one, on the event $\min_{w\in W} \eta_w \geq h$ we have $\min_{w\in U} \eta_w \geq h-x$ and so
\[
\pi(\min_{w\in U} \eta_w \geq h-x) \geq \theta^{16}\pi(\eta_0 \geq h)\,.
\]
Next we observe that, since every gradient along an edge can be $-1,0$
or $1$, the total contribution of a single edge to the
partition function is at most $1+2e^{-\beta}$.  As the total number of
interior edges in $U$ is less than $32x^2$,
the partition function of the model on the interior of $U$ is at most $((1+2e^{-\beta})^{32 x^2}$  under any
boundary conditions (we neglect the fact that not all gradients
correspond to configurations, only those that are curl free).
Moreover, the energy of a pyramid with base $U$ and
height $2x$ is bounded from above by $32x^2$. Thus, using again the FKG
inequality we get
\[
\pi(\eta_0 \geq h+x \mid \min_{w\in U} \eta_w \geq h-x) \geq\pi(\eta_0 \geq h+x \mid \eta_U \equiv h-x) \geq \frac{e^{-32\beta x^2}}{(1+2e^{-\beta})^{32 x^2}}\,.
\]
Combining the above estimates we get that
\[
\pi(\eta_0 \geq h+x) \geq \theta^{16} \frac{e^{-32\beta x^2}}{(1+2e^{2-\beta})^{32 x^2}} \pi(\eta_0 \geq h)\,.
\]
However, by Lemma~\ref{l:pGeqPH} we have that
\[
\pi(\eta_0 \geq h+x) \leq e^{-c\beta x h} \pi(\eta_0 \geq h)\,,
\]
and by combining these while using that $|x| \leq \epsilon h$ it follows that for some $c'>0$ depending only $\beta$,
\[
\theta \leq e^{-c' h}\,.
\]
Now suppose that $\max(|x|,|y|) > \epsilon h$.  Let $\mathcal{A}_{z_1,z_2}$ denote the event that there is a chain of vertices at height at least $h/2$ surrounding both $z_1$ and $z_2$.  Then
\begin{align*}
\pi(\eta_z\geq h \mid \eta_0 \geq h)  &\leq \pi(\mathcal{A}_{0,z}\,,\,\eta_z\geq h\mid \eta_0 \geq h) + \pi(\eta_z\geq h\mid \eta_0 \geq h\,,\, \mathcal{A}_{0,z}^c)\,.
\end{align*}
Now, $\mathcal{A}_{0,z}^c$ implies that the outermost chain of vertices at least $h/2$ surrounding $0$ does not include $z$.  Hence,
\[
\pi(\eta_z\geq h\mid \eta_0 \geq h\,,\, \mathcal{A}_{0,z}^c) \leq \sup_\gamma \pi(\eta_z \geq h \mid \eta_\gamma= h/2) \leq c e^{-\beta h}\,,
\]
where the supremum is over all chains of vertices $\gamma$ surrounding
$z$ and the second inequality follows by a basic Peierls estimate. So
either $\theta\leq 2 e^{-\tfrac{\epsilon}{100} h}$ (in which case we
are done) or $\pi(\mathcal{A}_{0,z},\eta_z\geq h\mid \eta_0 \geq h)
\geq e^{-\tfrac{\epsilon}{100} h}$ which we assume.  Using FKG, \eqref{e:tailRateRes} and
translation invariance,
\begin{align*}
&\pi(\eta_z\geq h\,,\, \mathcal{A}_{0,z},\mathcal{A}_{z,2z},\ldots,
\mathcal{A}_{(\frac{10}{\epsilon}-1)z,\frac{10}{\epsilon}z})\\
&\geq \pi(\eta_z\geq h\,,\, \mathcal{A}_{0,z},\, \eta_{2z}\ge h,\, \mathcal{A}_{z,2z},\ldots,
\eta_{\frac{10}{\epsilon}z}\ge h,\,\mathcal{A}_{(\frac{10}{\epsilon}-1)z,\frac{10}{\epsilon}z}\mid
\eta_0\ge h) \pi(\eta_0 \geq h)\\
&\geq
\prod_{j=1}^{10/\epsilon}\pi(\mathcal{A}_{(j-1)z,jz},\ \eta_{j z}\geq
h\mid \eta_{(j-1)z} \geq h)\; \pi(\eta_0 \geq h)\\
&\geq e^{-h/10}\pi(\eta_0 \geq h) \geq e^{-\beta
  (4+\epsilon_\beta)h^2}\,.
\end{align*}
However, another Peierls argument shows that the event $\{\mathcal{A}_{0,z},\mathcal{A}_{z,2z},\ldots,
\mathcal{A}_{(\frac{10}{\epsilon}-1)z,\frac{10}{\epsilon}z}\}$ 
has probability less
that $e^{-(5 \beta - \log 3) h^2}$ 
which yields a contradiction.
\end{proof}
With the above bounds the size and concentration of the maximum height $X_L$
(cf. \eqref{eq:E[XL]Res}) follows from essentially the same proof as
Theorem~\ref{mainthm:max}.  Similarly the height of the surface of the
SOS model with a floor (cf. \eqref{eq-height-concRes}) is given by essentially the same proof as Theorem~\ref{mainthm:floor-shape}.
 \end{proof}

\subsection*{Acknowledgments}
We thank David Wilson for pointing out the correspondence between configurations of edge-disjoint walks on a square and ASMs via the six-vertex model, which allowed us to sharpen our large deviation estimate for the RSOS model. We also thank Marek Biskup, Ron Peled, Yuval Peres and Ofer Zeitouni for useful discussions.

\begin{bibdiv}
\begin{biblist}

\bib{Abraham}{article}{
   author={Abraham, D. B.},
   title={Surface structures and phase transitions---exact results},
   book={
      title={Phase Transitions and Critical Phenomena, Vol. 10},
      editor={Domb, C.},
      editor={Lebowitz, J.L.},
      publisher={Academic Press},
      date={1986},
      pages={xv+370},
   },
   pages={1--74},
}

\bib{BDG}{article}{
    AUTHOR = {Bolthausen, E.},
    author={Deuschel, J-D.},
    author={Giacomin, G.},
     TITLE = {Entropic repulsion and the maximum of the two-dimensional
              harmonic crystal},
   JOURNAL = {Ann. Probab.},
    VOLUME = {29},
      date = {2001},
    NUMBER = {4},
     PAGES = {1670--1692},
}

\bib{BDZ}{article}{
   author= {Bolthausen, E.},
    author={Deuschel, J.D.},
    author={Zeitouni, O.},
     TITLE = {Entropic repulsion of the lattice free field},
   JOURNAL = {Comm. Math. Phys.},
    VOLUME = {170},
      date = {1995},
    NUMBER = {2},
     PAGES = {417--443},
}
				
\bib{BW}{article}{
   author={Brandenberger, R.},
   author={Wayne, C.E.},
   title={Decay of correlations in surface models},
   journal={J. Stat. Phys.},
   volume={27},
   date={1982},
   number={3},
   pages={425--440},
}

\bib{BFL}{article}{
   author={Bricmont, Jean},
   author={Fontaine, Jean-Raymond},
   author={Lebowitz, Joel L.},
   title={Surface tension, percolation, and roughening},
   journal={J. Stat. Phys.},
   volume={29},
   date={1982},
   number={2},
   pages={193--203},
}

\bib{BMF}{article}{
   author={Bricmont, J.},
   author={El Mellouki, A.},
   author={Fr{\"o}hlich, J.},
   title={Random surfaces in statistical mechanics: roughening, rounding,
   wetting,$\ldots\,$},
   journal={J. Stat. Phys.},
   volume={42},
   date={1986},
   number={5-6},
   pages={743--798},
}

\bib{CLMST1}{article}{
   author={Caputo, Pietro},
   author={Lubetzky, Eyal},
   author={Martinelli, Fabio},
   author={Sly, Allan},
   author={Toninelli, Fabio Lucio},
   title={Dynamics of $2+1$ dimensional SOS surfaces above a wall: slow mixing induced by entropic repulsion },
   journal={Ann. Probab.},
   status={to appear},
}

\bib{CLMST2}{article}{
   author={Caputo, Pietro},
   author={Lubetzky, Eyal},
   author={Martinelli, Fabio},
   author={Sly, Allan},
   author={Toninelli, Fabio Lucio},
   title={Scaling limit and cube-root fluctuations in SOS surfaces above a wall},
   journal={J. Eur. Math. Soc. (JEMS)},
   status={to appear},
}

\bib{CRASS}{article}{
    AUTHOR = {Caputo, Pietro},
author= {Lubetzky, Eyal},
author = {Martinelli, Fabio},
author = {Sly, Allan},
author = {Toninelli, Fabio Lucio},
     TITLE = {The shape of the {$(2+1)$-dimensional} {SOS} surface above a
              wall},
   JOURNAL = {C. R. Math. Acad. Sci. Paris},
    VOLUME = {350},
      date = {2012},
     PAGES = {703--706},
}		

\bib{CW76}{article}{
  title={Phase transition in the two-dimensional Coulomb gas, and the interfacial roughening transition},
  author={Chui, S. T.},
  author={Weeks, John D.},
  journal={Physical Review B},
  volume={14},
  number={11},
  pages={4978--4982},
  date={1976}
}

\bib{DKS}{book}{
   author={Dobrushin, R.},
   author={Koteck{\'y}, R.},
   author={Shlosman, S.},
   title={Wulff construction. A global shape from local interaction},
   series={Translations of Mathematical Monographs},
   volume={104},
   publisher={American Mathematical Society},
   place={Providence, RI},
   date={1992},
   pages={x+204},
}

\bib{FS1}{article}{
   author={Fr{\"o}hlich, J{\"u}rg},
   author={Spencer, Thomas},
   title={Kosterlitz-Thouless transition in the two-dimensional plane
   rotator and Coulomb gas},
   journal={Phys. Rev. Lett.},
   volume={46},
   date={1981},
   number={15},
   pages={1006--1009},
}

\bib{FS2}{article}{
   author={Fr{\"o}hlich, J{\"u}rg},
   author={Spencer, Thomas},
   title={The Kosterlitz-Thouless transition in two-dimensional abelian spin
   systems and the Coulomb gas},
   journal={Comm. Math. Phys.},
   volume={81},
   date={1981},
   number={4},
   pages={527--602},
}

\bib{FS3}{article}{
   author={Fr{\"o}hlich, J{\"u}rg},
   author={Spencer, Thomas},
   title={The Bere\v zinski\u\i -Kosterlitz-Thouless transition
   (energy-entropy arguments and renormalization in defect gases)},
   conference={
      title={Scaling and self-similarity in physics },
   },
   book={
      series={Progr. Phys.},
      volume={7},
   },
   date={1983},
   pages={29--138},
}

\bib{GMM}{article}{
   author = {Gallavotti, G.},
   author = {Martin-L\"{o}f, A.},
   author = {Miracle-Sol\'{e}, S.},
   title = {Some problems connected with the description of coexisting phases at low temperatures in the Ising model},
   book = {
     title = {Statistical Mechanics and Mathematical Problems},
     series = {Lecture Notes in Physics},
     editor = {Lenard, A.},
     publisher = {Springer},
   },
   date = {1973},
   pages = {162--204},
   volume = {20},
}

\bib{Lawler}{book}{
   author={Lawler, Gregory F.},
   title={Intersections of random walks},
   series={Probability and its Applications},
   publisher={Birkh\"auser Boston Inc.},
   place={Boston, MA},
   date={1991},
   pages={219},
}

\bib{LP}{book}{
    author = {{R. Lyons with Y. Peres}},
    title = {Probability on Trees and Networks},
    publisher = {Cambridge University Press},
    date = {2014},
    note = {In preparation. Current version is available at \texttt{http://mypage.iu.edu/\~{}rdlyons/prbtree/book.pdf}},
}

\bib{Sinai}{book}{
   author={Sina{\u\i}, Ya. G.},
   title={Theory of phase transitions: rigorous results},
   series={International Series in Natural Philosophy},
   volume={108},
   publisher={Pergamon Press},
   place={Oxford},
   date={1982},
   pages={viii+150},
}

\bib{Soardi}{book}{
   author={Soardi, Paolo M.},
   title={Potential theory on infinite networks},
   publisher={Springer-Verlag},
   place={Berlin},
   date={1994},
   pages={viii+187},
}

\bib{Swendsen}{article}{
  title={Monte Carlo study of the Coulomb gas and the Villain XY model in the discrete Gaussian roughening representation},
  author={Swendsen, Robert H},
  journal={Physical Review B},
  volume={18},
  number={1},
  pages={492},
  date={1978},
}

\bib{Villain}{article}{
  title={Theory of one-and two-dimensional magnets with an easy magnetization plane. II. The planar, classical, two-dimensional magnet},
  author={Villain, J},
  journal={Journal de Physique},
  volume={36},
  number={6},
  pages={581--590},
  date={1975},
}

\bib{Weeks80}{article}{
  title={The roughening transition},
  author={Weeks, John D.},
  book={
      title={Ordering in strongly fluctuating condensed matter systems},
      editor={Riste, Tormod},
      year={1980},
      publisher={Springer},
      pages={474},
   },
  pages={293--317},
}

\bib{WG79}{article}{
  title={Dynamics of crystal growth},
  author={Weeks, John D.},
  author={Gilmer, George H},
  journal={Adv. Chem. Phys},
  volume={40},
  number={489},
  pages={157--227},
  date={1979}
}

\end{biblist}
\end{bibdiv}



\end{document}